\def\A{\mathbb A}
\def\C{\mathbb C}
\def\K{\mathbb K}
\def\P{\mathbb P}
\def\R{\mathbb R}
\def\Q{\mathbb Q}
\def\Z{\mathbb Z}
\def\AA{\mathcal A}
\def\BB{\mathcal B}
\def\CC{\mathcal C}
\def\DD{\mathcal D}
\def\EE{\mathcal E}
\def\FF{\mathcal F}
\def\HH{\mathcal H}
\def\OO{\mathcal O}
\def\TT{\mathcal T}
\def\XX{\mathcal X}
\def\chr{{\operatorname{char}}}
\def\Hom{{\operatorname{Hom}}}
\def\Ker{{\operatorname{Ker}}}
\def\Coker{{\operatorname{Coker}}}
\def\Tor{{\operatorname{Tor}}}
\def\Spec{{\operatorname{Spec}}}
\def\Supp{{\operatorname{Supp}}}
\def\CH{{\operatorname{CH}}}
\def\Cl{{\operatorname{Cl}}}
\def\Pic{{\operatorname{Pic}}}
\def\id{{\operatorname{id}}}
\def\Gr{{\operatorname{gr}}}
\def\Coh{{\operatorname{Coh}}}
\def\Sing{{\operatorname{Sing}}}
\def\rk{{\operatorname{rk}}}
\def\lcm{{\operatorname{lcm}}}
\def\SL{{\operatorname{SL}}}
\def\Sh{{\operatorname{Sh}}}
\def\dg{{\operatorname{dg}}}
\def\can{{\operatorname{PD}}}
\def\PD{{\operatorname{PD}}}
\def\Sch{{\operatorname{Sch}}}
\def\chr{{\operatorname{char}}}
\def\wh{\widehat}
\def\ol{\overline}
\def\uI{\underline{\mathbb I}}
\def\uF{\underline{\mathbb F}}
\def\cHom{\HH om}
\def\Dsg{\DD^{\mathrm{sg}}}
\def\Db{\DD^b}
\def\Dm{\DD^-}
\def\Dperf{\DD^{\mathrm{perf}}}
\def\Perf{\DD^{\mathrm{perf}}}
\def\Ksg{{\mathrm K}^{sg}}
\def\KKsg{{\mathbb K}^{sg}}
\def\Kt{{\mathrm{K}}}
\def\Gt{{\mathrm{G}}}
\def\GL{{\mathrm{GL}}}
\def\cdh{{\mathrm{cdh}}}
\theoremstyle{plain}
\newtheorem{dummy}{dummy}[section]
\newtheorem{theorem}[dummy]{Theorem}
\newtheorem{proposition}[dummy]{Proposition}
\newtheorem{lemma}[dummy]{Lemma}
\newtheorem{corollary}[dummy]{Corollary}
\newtheorem{example}[dummy]{Example}
\newtheorem{application}[dummy]{Application}
\newtheorem{definition}[dummy]{Definition}
\newtheorem{remark}[dummy]{Remark}
\numberwithin{equation}{section}
\def\bal{\begin{aligned}}
\def\eal{\end{aligned}}
\newcommand{\blue}[1]{\leavevmode{\color{blue}{#1}}}
\newcolumntype{P}[1]{>{\centering\arraybackslash}p{#1}}
\newcolumntype{M}[1]{>{\centering\arraybackslash}m{#1}}
\title[Singularity $\Kt$-theory]{$\Kt$-theory and the singularity category of\\ quotient singularities}
\author{NEBOJSA PAVIC AND EVGENY SHINDER}
\begin{document}

\begin{abstract} 
In this paper we study Schlichting's $\Kt$-theory groups of the Buchweitz-Orlov
singularity category $\Dsg(X)$ of a quasi-projective algebraic scheme $X/k$
with applications to Algebraic $\Kt$-theory.

We prove for isolated quotient singularities over an algebraically closed field of characteristic zero that $\Kt_0(\Dsg(X))$
is finite torsion, and that $\Kt_1(\Dsg(X)) = 0$.
One of the main applications is that 
algebraic varieties with isolated quotient singularities 
satisfy rational Poincar\'e duality on the level of the Grothendieck group;
this allows computing the Grothendieck group of such varieties in terms of their resolution
of singularities. 
Other applications concern the Grothendieck group of perfect complexes supported at a singular point
and topological filtration on the Grothendieck groups.
\end{abstract}

\maketitle

\tableofcontents

\section*{Introduction} 

In this paper we perform a systematic study of the Schlichting $\Kt$-theory groups 
of the dg-enhancement of the Buchweitz-Orlov singularity category $\Dsg(X) = \Db(X) / \Dperf(X)$;
we call the latter $\Kt$-theory groups the \emph{singularity $\Kt$-theory}.

Let $X/k$ be a quasi-projective scheme.
Let $\Kt_i(X) = \Kt_i(\Dperf(X))$ denote the Thomason-Trobaugh 
$\Kt$-theory of perfect complexes, which in the quasi-projective case
coincides with $\Kt$-theory of vector bundles on $X$, 
while $\Gt_i(X) = \Kt_i(\Db(X))$ is Quillen's $\Gt$-theory, that is $\Kt$-theory
of coherent sheaves.
By construction the singularity $\Kt$-theory groups $\Ksg_i(X)$ fit into an exact sequence
\[
\dots \to \Kt_i(X) \to \Gt_i(X) \to \Ksg_i(X) \to \Kt_{i-1}(X) \to \dots,
\]
for $i \ge 1$, finishing at  
\begin{equation}\label{eq:short-k-th}
\dots \to \Ksg_1(X) \to \Kt_0(X) \to \Gt_0(X) \to \Ksg_0(X) \to 0,
\end{equation}
but negative $\Kt$ groups can be taken into account as well, see Lemma \ref{lem:sing-K-th}.

A classical result going back to Serre is that if $X$ is regular, then $\Dperf(X) = \Db(X)$,
so the canonical maps 
$\Kt_i(X) \to \Gt_i(X)$ are isomorphisms for all $i$ and $\Ksg_i(X) = 0$. 
In general we may think of the singularity $\Kt$-theory groups $\Ksg_i(X)$ as a tool
for controlling the difference between $\Kt$-theory and $\Gt$-theory. 
This approach is essentially
a homological incarnation of Orlov's definition of the singularity category,
and explains the terms ``singularity category'' and ``singularity $\Kt$-theory''.

We develop the theory of singularity $\Kt$-theory, explaining its functoriality properties
and stating relevant exact sequences. Many of these properties follow directly from the work of Orlov
\cite{orlov-sing-1, orlov-sing-2} once one makes sure that the relevant triangulated functors 
are induced from dg enhancements.
For a similar perspective on studying homological invariants of the singularity category 
see \cite{tabuada1, tabuada2, gratz-stevenson},
and for an algebraic approach to $\Kt_0$ and $\Kt_1$ of the singularity category 
via MCM modules see \cite{Holm, Navkal, Flores}.

Let us motivate our study from several viewpoints, relating to earlier work in 
Algebraic $\Kt$-theory of singular varieties, 
and pointing out what singularity $\Kt$-theory has to offer in each case.
As a general rule our most interesting applications concentrate on isolated rational singularities,
including quotient and $ADE$ singularities.

\subsection*{1. Poincar\'e duality for quotient singularities.}

One of the main questions which motivated this work has been the following one.
If $X/k$ is a quasi-projective algebraic variety with quotient singularities, it is a natural
guess that canonical maps $\Kt_i(X) \to \Gt_i(X)$ are isomorphisms up to torsion; indeed this could be expected
as $X$ should be thought of as an analog of a $\Q$-manifold, while $\Kt_i(X) \to \Gt_i(X)$ may be thought as 
the Poincar\'e duality map; we use the notation $\PD: \Kt_i(X) \to \Gt_i(X)$ for this map.
In general however it is not true that $\PD$ is an isomorphism up to torsion for varieties
with quotient singularities.

Indeed, if either $X$ has \emph{nonisolated} quotient singularities or if
$i \ge 1$, then examples of Gubeladze \cite{gubeladze} (cf Example \ref{ex:toric}) 
and Srinivas \cite{srinivas3} (see Remark \ref{rem:K-1-huge})
respectively show that $\Kt_i(X) \to \Gt_i(X)$ is not an isomorphism, even after tensoring with $\Q$. 
The typical phenomenon is that $\Gt_i(X)$ are under control while $\Kt_i(X)$ become counterintuitive.
In both examples of Srinivas and Gubeladze $\Kt_i(X) \to \Gt_i(X)$ has a ``huge'' kernel.

One of our main results is that Poincar\'e duality does hold up to torsion for $i = 0$ in the isolated quotient
singularities case:

\begin{theorem}[See Theorem \ref{theorem:main-thm}]
Let $X$ be an $n$-dimensional quasi-projective variety over an algebraically closed field $k$ of characteristic zero.
Assume that $X$ has isolated quotient singularities with isotropy groups $G_i$, $i = 1, \dots, m$.
Then the map
\[
\can: \Kt_0(X) \to \Gt_0(X) 
\]
is injective, and its cokernel is a finite torsion group annihilated by
$\lcm(|G_1|,\ldots ,|G_m|)^{n-1}$.
\end{theorem}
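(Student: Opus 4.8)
The plan is to read off both assertions directly from the tail \eqref{eq:short-k-th} of the singularity $\Kt$-theory sequence. By right exactness, the cokernel of $\PD\colon \Kt_0(X)\to\Gt_0(X)$ is precisely $\Ksg_0(X)$, while its kernel is the image of the boundary map $\Ksg_1(X)\to\Kt_0(X)$. Thus injectivity will follow once I show $\Ksg_1(X)=0$, and the cokernel assertion becomes a statement about the group $\Ksg_0(X)$ together with its annihilator. Everything therefore reduces to understanding the two singularity $\Kt$-groups $\Ksg_0(X)$ and $\Ksg_1(X)$.

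The first genuine step is to localize the computation at the singular points. Since the singularities are isolated, Orlov's localization theorem for singularity categories (together with the invariance of $\Dsg$ under completion for isolated singularities) yields an orthogonal decomposition $\Dsg(X)\simeq\bigoplus_{i=1}^{m}\Dsg(\wh{\OO}_{X,x_i})$, and by hypothesis $\wh{\OO}_{X,x_i}\cong S^{G_i}$ with $S=k[[x_1,\dots,x_n]]$ and $G_i\subset\GL_n(k)$ acting freely away from the origin. Passing to $\Kt$-theory gives $\Ksg_j(X)\cong\bigoplus_i \Kt_j(\Dsg(S^{G_i}))$, so it suffices to treat a single local quotient singularity $R=S^{G}$ and to prove (i) $\Kt_1(\Dsg(R))=0$ and (ii) $\Kt_0(\Dsg(R))$ is finite and annihilated by $|G|^{n-1}$. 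Taking the direct sum then replaces $|G|^{n-1}$ by $\lcm(|G_1|,\dots,|G_m|)^{n-1}$, since $\lcm(a,b)^{n-1}=\lcm(a^{n-1},b^{n-1})$.

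For the local group I use that $R$ is regular away from its maximal ideal $\mathfrak m$. Because $R$ is local, $\Kt_0(R)=\Z\cdot[R]$, so \eqref{eq:short-k-th} identifies $\Kt_0(\Dsg(R))$ with the reduced $\Gt$-theory $\widetilde{\Gt}_0(R):=\Gt_0(R)/\Z[R]$, which is the rank-$0$ part of $\Gt_0(R)$. Restricting to the smooth punctured spectrum $U=\Spec R\setminus\{\mathfrak m\}=(\Spec S\setminus 0)/G$, where $\Kt$- and $\Gt$-theory agree, the localization sequence
\[
\Gt_0(\{\mathfrak m\})\to\Gt_0(\Spec R)\to\Gt_0(U)\to 0
\]
shows that $\widetilde{\Gt}_0(R)$ is a cyclic group generated by the class $[k]=[R/\mathfrak m]$; hence I must bound the order of $[k]$ in $\Gt_0(R)$ by $|G|^{n-1}$. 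To do this I pass to $G$-equivariant $\Gt$-theory of $\Spec S$, which is the representation ring $R(G)$. There the class of the origin is its Koszul resolution, i.e.\ the $\Kt$-theoretic Euler class $\lambda_{-1}(V^{\vee})=\sum_j(-1)^j[\wedge^j V^{\vee}]$ of the defining representation $V$, an element lying deep in the augmentation ideal $I\subset R(G)$. Using that the successive quotients of the $I$-adic (equivalently $\gamma$-) filtration on $R(G)$ are finite and each annihilated by $|G|$ (Atiyah's computation $\Gr_I R(G)\approx H^{\mathrm{ev}}(G;\Z)$, whose positive-degree groups are killed by $|G|$), and transporting this back through $\pi\colon\Spec S\to\Spec R$, one accumulates the required power of $|G|$; the exponent $n-1$ reflects that the generic level is matched by $\Kt_0$, and equivalently corresponds to the $(n-1)$-dimensional exceptional fibre of a resolution of the point.

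The main obstacle is precisely this last, quantitative point: pinning down the sharp annihilator $|G|^{n-1}$ rather than some cruder power of $|G|$, which requires careful bookkeeping of the augmentation filtration (or, on the geometric side, an equivariant Adams--Riemann--Roch comparison between $\Gt_0(R)$ and $R(G)$ through the resolution) and control of the extension problems between its graded pieces. The remaining input, the vanishing $\Kt_1(\Dsg(R))=0$ needed for injectivity, should follow from the same local analysis: the contribution of the smooth punctured spectrum is matched by $\Kt_1(R)$, leaving no residual $\Kt_1$ in the singularity category, so the boundary map $\Ksg_1(X)\to\Kt_0(X)$ vanishes and $\PD$ is injective. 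I expect this half to be technically routine compared with the annihilator estimate.
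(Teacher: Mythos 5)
Your skeleton agrees with the paper's: read off $\Ker(\can)$ and $\Coker(\can)$ from the sequence \eqref{eq:seq-k}, reduce to the local model at each singular point via Orlov's completion theorem (you should also note that this a priori computes $\KKsg_0(X)$, not $\Ksg_0(X)$, and that one needs $\Kt_{-1}(\A^n/G)=0$ and the inclusion $\Ksg_0(X)\subset\KKsg_0(X)$ to descend the bound), and then prove finiteness and the $|G|^{n-1}$-annihilation of $\Ksg_0$ together with the vanishing of $\Ksg_1$ locally. The local computation, however, breaks at its first step. The localization sequence $\Gt_0(k)\to\Gt_0(R)\to\Gt_0(U)\to 0$ presents $\Gt_0(U)$ as the quotient of $\Gt_0(R)$ by the subgroup generated by $[R/\mathfrak{m}]$; it does \emph{not} say that $\widetilde{\Gt}_0(R)$ is cyclic on $[R/\mathfrak{m}]$. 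That would force $\Gt_0(U)=\Z\cdot[\OO_U]$, which is false: $U=(\A^n\setminus 0)/G$ has $\Gt_0(U)\simeq R(G)/rR(G)\simeq\Z\oplus\Ksg_0(\A^n/G)$ (Proposition \ref{prop:gsprverdier}), and this is not cyclic in general (e.g. $(\Z/2)^2$ for the $D_{2l}$ surface singularities). Worse, over an algebraically closed field the class of the closed point dies in $\Ksg_0$ (Corollary \ref{cor:structure-sheaves-vanish}): the torsion of $\Ksg_0(\A^n/G)$ lives in the coniveau-graded pieces $\Gr^i$ for $1\le i\le n-1$, not in codimension $n$. So bounding the order of $[R/\mathfrak{m}]$ proves nothing about $\Ksg_0$. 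The paper's argument instead bounds each $\Gr^i\Ksg_0(\A^n/G)$, $1\le i\le n-1$, as a quotient of $\CH_{n-i}(\A^n/G)$, and kills the latter by $|G|$ because pushing a subvariety forward to $\A^n$ (whose positive-dimensional Chow groups vanish) multiplies by a field-extension degree dividing $|G|$; the exponent $n-1$ is exactly the number of possibly nonzero graded pieces, and finiteness comes from $\Gt_0(\A^n/G)\otimes\Q\simeq\CH_*(\A^n/G)\otimes\Q\simeq\Q$ via Baum--Fulton--MacPherson. Your $I$-adic/$\gamma$-filtration idea on $R(G)$ is in the right spirit given Proposition \ref{prop:gsprverdier}, but you never carry it out, and you flag it yourself as the main obstacle.

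The second gap is the vanishing $\Ksg_1(\A^n/G)=0$, which you defer as ``technically routine.'' It is not, and your sketch cannot be right as stated because it makes no use of $k$ being algebraically closed, whereas $\Ksg_1(\A^2/\Z_2)\simeq k^*/(k^*)^2\neq 0$ over non-closed fields (Example \ref{ex:A1-any-field}). The paper's proof compares the $G$-equivariant localization sequence on $\A^n$ with the $\Gt$-theory localization sequence on $\A^n/G$, and needs two nontrivial inputs: Suslin's theorem that $\Kt_j(k)$ is divisible for $k$ algebraically closed (so that $\Ksg_0(\A^n/G)\otimes\Kt_1(k)=0$ and $T_0=\Tor(\Ksg_0(\A^n/G),\Z)=0$), and the computation $\Kt_0(\A^n/G)=\Z$ of Proposition \ref{prop:K_-1}, which is itself proved by cdh-descent. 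Working with the complete local ring makes $\Kt_0(R)=\Z$ trivial, but it also deprives you of the equivariant, Chow-theoretic and cdh tools on which both halves of the local computation actually rest; you would need to justify separately that the relevant $\Gt$- and $\Ksg$-groups of $\Spec\wh{\OO}_{X,x_i}$ agree with those of $\A^n/G_i$.
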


We deduce the following Corollary of the Theorem, which often allows to conclude that
$\Kt_0(X)$ is finitely generated:

\begin{corollary}[See Theorem \ref{thm:res-of-sing-injective}]
Under the same assumptions as in the Theorem, for any resolution of singularities $\pi: Y \to X$ the pull-back $\pi^*: \Kt_0(X) \to \Kt_0(Y)$ is injective.
\end{corollary}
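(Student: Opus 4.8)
The plan is to leverage the Poincaré duality statement of the Theorem together with the fact that quotient singularities in characteristic zero are rational. Since $Y$ is a resolution, it is smooth, so by Serre's theorem the canonical map $\PD_Y\colon \Kt_0(Y)\to\Gt_0(Y)$ is an isomorphism. The strategy is to factor the duality map $\PD_X\colon \Kt_0(X)\to\Gt_0(X)$ through $\pi^*$ and then read off injectivity of $\pi^*$ from injectivity of $\PD_X$.

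First I would set up the proper pushforward $\pi_*\colon\Gt_0(Y)\to\Gt_0(X)$, which exists because a resolution $\pi$ is proper and hence $R\pi_*$ preserves boundedness and coherence, descending to a homomorphism on $\Gt_0$. Next I would invoke the projection formula in the form $\pi_*\bigl(\pi^*\alpha\cdot\beta\bigr)=\alpha\cdot\pi_*\beta$ for $\alpha\in\Kt_0(X)$ and $\beta\in\Gt_0(Y)$, where the products denote the $\Kt_0$-module structure on $\Gt_0$ given by derived tensor product; this is valid because proper pushforward is $\Kt_0(X)$-linear once $\Gt_0(Y)$ is regarded as a $\Kt_0(X)$-module via $\pi^*$. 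Specializing to $\beta=[\OO_Y]$ gives $\pi_*\bigl(\PD_Y(\pi^*\alpha)\bigr)=\alpha\cdot\pi_*[\OO_Y]$, since $\PD_Y(\pi^*\alpha)$ is by definition the image $\pi^*\alpha\cdot[\OO_Y]$ of $\pi^*\alpha$ under the canonical map.

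The key geometric input is the identity $\pi_*[\OO_Y]=[\OO_X]$ in $\Gt_0(X)$. This holds because quotient singularities over a field of characteristic zero are rational, so that $R\pi_*\OO_Y=\OO_X$ with vanishing higher direct images. Consequently the composition becomes
\[
\pi_*\circ\PD_Y\circ\pi^*=\PD_X\colon \Kt_0(X)\to\Gt_0(X),
\]
because $\alpha\cdot[\OO_X]$ is precisely $\PD_X(\alpha)$. Injectivity of $\pi^*$ then follows formally: if $\pi^*\alpha=0$, then $\PD_X(\alpha)=\pi_*\bigl(\PD_Y(0)\bigr)=0$, and since $\PD_X$ is injective by the Theorem, we conclude $\alpha=0$.

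I do not expect any single computational step to be hard; all the genuine content sits in the Theorem. The main point requiring care is functoriality: one must ensure that $\pi^*$, $\pi_*$, the module structure, and the projection formula are all induced from the dg-enhancements, so that the factorization $\pi_*\circ\PD_Y\circ\pi^*=\PD_X$ is literally an identity of group homomorphisms rather than merely a statement on generators. Once this compatibility and the rationality input $\pi_*[\OO_Y]=[\OO_X]$ are in place, the conclusion is immediate.
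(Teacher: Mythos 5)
Your argument is correct and is essentially identical to the paper's: the paper establishes the commutative triangle $\can = \pi_* \circ \pi^*$ via the projection formula and the rationality identity $\pi_*\OO_Y = \OO_X$ (diagram \eqref{diagram:res-of-sing}), and then reads off injectivity of $\pi^*$ from the injectivity of $\can\colon \Kt_0(X)\to\Gt_0(X)$ supplied by the main theorem. Your write-up merely unpacks the projection-formula step more explicitly, including the dg-enhancement compatibility the paper handles by citation.
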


This is a strengthening of a result of Levine who proves the result in dimension up to three and
shows that $\pi^*$ has torsion kernel in general \cite{levine}.

Let us emphasize that there is in principle no easy way of controlling $\Kt_0(X)$ of singular varieties.
To illustrate our point, let us note that it is a well-known open question in $\Kt$-theory
of singular varieties, whether every weighted projective space 
$X = \P(a_0, \dots, a_n)$ has a finitely generated $\Kt_0(X)$
\cite[Acknowledgements]{gubeladze}, \cite[5.2.3]{Joshua-Krishna}; 
note that $\Gt_0(X)$ is finitely generated
and has rank $n+1$.

One important method of computing $\Kt$-theory of singular varieties has been 
developed in \cite{weibel-co-cdh, weibel-co-toric, weibel-co-negative-bass, weibel-co, weibel-co-vorst}
and consists in relating $\Kt$-theory to various sheaf cohomology groups. This method
has been applied to weighted projective spaces in \cite{massey} where 
$\Kt_0(\P(1,\dots,1,a))$ has been computed (it is isomorphic to the Grothendieck group
of a projective space of the same dimension, which is the answer one would expect).

Regarding weighted projective spaces, we can prove the following.
If $a_0, \dots, a_n$ are pairwise coprime,
so that $X = \P(a_0, \dots, a_n)$ has isolated quotient singularities,
then using the Theorem and the Corollary above we deduce that 
$\Kt_0(X)$ is a free abelian group of rank $n+1$
(Application \ref{appl:weighted-Pn}).

\medskip

The Theorem above follows using the exact sequence \eqref{eq:short-k-th} once we know that
for varieties with isolated quotient singularities over an algebraically closed field of characteristic zero
$\Ksg_0(X)$ is finite torsion (Proposition \ref{prop:G_0})
and $\Ksg_1(X) = 0$ (Corollary \ref{K_1^Sg=0}). 
In order to study the general case we first study the local case $\A^n/G$, where
a finite group $G$ acts on its linear representation. We study this local case in some detail
relying on tools such as equivariant $\Kt$-theory, equivariant Chow groups
and cdh topology
(Propositions \ref{prop:AnG-c1}, 
\ref{prop:K_0^Sg-and-order},
\ref{prop:gsprverdier},
\ref{prop:K_j^Sg}).

In contrast to the isolated quotient singularities case, the 
group $\Ksg_1(X)$ in general does not vanish for more general singularities, e.g. for rational isolated
singularities (Example \ref{ex:cubic-cone}) or non-isolated quotient singularities (Example \ref{ex:toric}), 
and the assumption that
$k$ is algebraically closed is necessary as well (Example \ref{ex:A1-any-field}).

\subsection*{2. Cohomology and homology algebraic cycles.}

The usual Chow groups have the functoriality property of Borel-Moore homology theory, and it has
been asked by Srinivas what is the correct definition of Chow cohomology of singular varieties \cite{srinivas-ICM}.
Taking insight from the intersection homology, it seems natural that in order to define such a theory
one needs to generalize both the algebraic cycles and the rational equivalence relation. 
For example the Chow group
$\CH_{\dim(X)-1}(X)$ (which coincides with $\Cl(X)$ when $X$ is normal)  is the group of ``homology divisors'', whereas
$\Pic(X)$ can be thought as the group of ``cohomology divisors''. Cohomology 
zero cycles have been introduced and studied in \cite{levine-weibel}.

Let us now take the sheaves rather than cycles perspective, and see what we can say then.
This approach is legitimate as one way to define Chow groups (up to torsion) is 
to take the associated graded groups
for the topological filtration on $\Gt_0(X)$, that is the filtration given by codimension of support.
See work of Gillet \cite{gillet} for a realization of this approach to 
Chow groups and intersection theory in the regular case
and Fulton \cite{fulton-cohomology} for the view-point on 
$\Kt_0(X) \otimes \Q$ as a variant for Chow-cohomology theory.

In fact the difference between homology and cohomology algebraic 
cycles in the singular case
seems to be of the same nature as between coherent sheaves and perfect complexes.
For example, if we ask the question: 
what makes a homology class cohomological, this can be interpreted
as the question about 
\[
\Ksg_0(X) = \Coker(\Kt_0(X) \to \Gt_0(X)). 
\]

We explain that there is an induced ``topological'' filtration by codimension of support 
on $\Ksg_0(X)$ and study its associated
graded groups $\Gr^i \Ksg_0(X) = F^i \Ksg_0(X) / F^{i+1} \Ksg_0(X)$; 
we think of these groups as \emph{obstructions for the codimension $i$
homology algebraic cycles to be cohomological}. 
This approach can be demonstrated in small dimension and codimension as follows.

\begin{theorem}(Proposition \ref{prop:K_0sg-filtr})
Let $X/k$ be a connected reduced quasi-projective scheme of pure dimension
over an algebraically closed field, then

\noindent(1) $\Gr^0 \Ksg_0(X) = \Z^{N-1}$, where $N$ is the number of irreducible components of $X$.

\noindent(2) If $X$ is irreducible and normal, then $\Gr^1 \Ksg_0(X) = \Cl(X) / \Pic(X)$.

\noindent(3) $\Gr^{\dim{X}} \Ksg_0(X) = 0$. 
\end{theorem}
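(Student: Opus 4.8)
The plan is to carry the topological filtration from $\Gt_0(X)$ to $\Ksg_0(X)$ as a quotient filtration and reduce each statement to a comparison with cycles. Write $\PD$ for the duality map and set $K := \im\big(\PD: \Kt_0(X) \to \Gt_0(X)\big)$, so that $\Ksg_0(X) = \Gt_0(X)/K$ and $F^i\Ksg_0(X) = (F^i\Gt_0(X)+K)/K$. Since $F^{i+1}\Gt_0(X) \subseteq F^i\Gt_0(X)$, the modular (Dedekind) law for abelian groups gives
\[
\Gr^i\Ksg_0(X) \;\cong\; \Gr^i\Gt_0(X)\big/\,\rho_i\!\big(F^i\Gt_0(X)\cap K\big),
\]
where $\rho_i: F^i\Gt_0(X) \twoheadrightarrow \Gr^i\Gt_0(X)$ is the projection. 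Thus each part requires two inputs: an identification of $\Gr^i\Gt_0(X)$ through the comparison surjection $\CH^i(X) \twoheadrightarrow \Gr^i\Gt_0(X)$ sending an integral codimension-$i$ subvariety $V$ to $[\OO_V]$, and a computation of the image of the locally free (perfect) classes inside it.

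For (1), the generic rank at the $N$ component points identifies $\Gr^0\Gt_0(X)$ with the free abelian group $\Z^N$ on the irreducible components of $X$. A locally free class $[E]$ maps to $(\rk E,\dots,\rk E)$, and as $X$ is connected the rank is constant; hence $\rho_0(K)$ is the diagonal $\Z\cdot(1,\dots,1)$, already generated by $[\OO_X]$. Therefore $\Gr^0\Ksg_0(X) = \Z^N/\Z = \Z^{N-1}$.

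For (2), with $X$ irreducible and normal the comparison map specializes to the classical isomorphism $\Cl(X) = \CH^1(X) \xrightarrow{\ \sim\ } \Gr^1\Gt_0(X)$. An element $\xi = \sum a_i[E_i]\in K\cap F^1\Gt_0(X)$ satisfies $\sum a_i\rk E_i = 0$, so $\xi = \sum a_i\big([E_i]-\rk(E_i)[\OO_X]\big)$, whose class in $\Gr^1\Gt_0(X) \cong \Cl(X)$ is $\sum a_i\,c_1(E_i) = \sum a_i\,c_1(\det E_i)$. Since $c_1$ of a line bundle is precisely its image under the natural inclusion $\Pic(X)\hookrightarrow\Cl(X)$, the subgroup $\rho_1(K\cap F^1\Gt_0(X))$ equals $\Pic(X)$, and therefore $\Gr^1\Ksg_0(X) = \Cl(X)/\Pic(X)$.

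For (3), note $F^{\dim X+1}\Gt_0(X)=0$, so $\Gr^{\dim X}\Gt_0(X) = F^{\dim X}\Gt_0(X)$ is generated by skyscraper classes $[\OO_x]$. If $x$ is a regular point then $\OO_x$ has finite Tor-dimension with support $\{x\}$ in the regular locus, so it is perfect and $[\OO_x]\in K$; combined with $[\OO_x]\in F^{\dim X}\Gt_0(X)$ this gives $[\OO_x]\in\rho_{\dim X}(K\cap F^{\dim X}\Gt_0(X))$. For a general closed point I would move it into the regular locus: choosing an integral curve $C\subseteq X$ through $x$ not contained in $\Sing(X)$ and a suitable rational function on its normalization, the class $[\OO_x]$ becomes, modulo rational equivalence and hence already inside $\Gr^{\dim X}\Gt_0(X)$, a combination of $[\OO_y]$ with $y$ regular. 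Thus every generator lies in $\rho_{\dim X}(K\cap F^{\dim X}\Gt_0(X))$, and $\Gr^{\dim X}\Ksg_0(X)=0$. The substantive points are the codimension-one identification $\Cl(X)\cong\Gr^1\Gt_0(X)$ together with the matching of $\Pic(X)$ as the subgroup of $c_1$-classes, and the moving argument just described; these are where normality and the standing hypotheses on $X$ are genuinely used, and I expect the moving lemma placing $0$-cycles on the regular locus to be the main technical obstacle, since one must ensure the chosen curve meets $\Sing(X)$ only away from the points to which $x$ is moved.
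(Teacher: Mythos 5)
Your argument for (1) and (2) is essentially the paper's: the same identification $\Gr^i \Ksg_0(X) \cong \Gr^i\Gt_0(X)/\rho_i\bigl(F^i\Gt_0(X)\cap \im(\can)\bigr)$ (the paper's formula \eqref{eq:gr-formula}), the same rank computation for $i=0$, and for $i=1$ the same two inputs, namely $\Gr^1\Gt_0(X)\cong\Cl(X)$ (obtained by restricting to the smooth locus, which is where normality is used) and $\Gr^1\Kt_0(X)\cong\Pic(X)$ via the determinant --- the paper cites Fulton--Lang for the latter where you spell out the $c_1(\det E)$ computation. The genuine divergence is in (3). The paper fixes $Z\supseteq\Sing(X)$, takes a de Jong alteration $\pi:Y\to X$ with $Y$ smooth, lifts the closed point $x$ to $y\in Y$ (using $k=\bar k$), moves $y$ off $\pi^{-1}(Z)$ by the moving lemma on the smooth $Y$, and pushes the relation forward; you instead cut an integral curve $C\subseteq X$ through $x$ with $C\not\subseteq\Sing(X)$ and move the point on the normalization $\tilde C$. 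This is correct and buys you a more elementary proof (no alterations), at the cost of two points you should make explicit: the existence of such a $C$ needs a Bertini-type argument (for $n-1$ general hyperplanes through $x$, the section of $\Sing(X)$ is finite while every component of the section of $X$ through $x$ has dimension $\ge 1$, so some such component is not contained in $\Sing(X)$); and the output on $\tilde C$ is a relation $[\OO_p]=\sum m_j[\OO_{q_j}]$ with the $q_j$ avoiding the finite set $\nu^{-1}(C\cap\Sing(X))$ --- a $\Z$-combination of regular points rather than a single point, which suffices since each such class is perfect --- transported to $\Gt_0(X)$ by pushforward along the finite, hence proper, map $\tilde C\to X$, with no multiplicities because $k$ is algebraically closed. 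Your closing worry is misplaced: the curve may well pass through $\Sing(X)$ (it passes through $x$); what matters is only that the divisor of the chosen rational function on $\tilde C$ avoids the finite preimage of $C\cap\Sing(X)$, which is arranged exactly as in the paper's parenthetical ``reduce to the case of a curve.''
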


In particular, if $X$ is an irreducible normal
surface, then $\Ksg_0(X)$ is concentrated in a single degree $1$ and
$\Ksg_0(X) \simeq \Cl(X)/\Pic(X)$.

A related and especially amusing phenomenon is that the well-known Kn\"orrer periodicity shifts the 
topological filtration by one (see Proposition \ref{prop:knorrer-shift});
this puts questions such as factoriality of threefolds and irreducibility of curves on equal footing
(Application \ref{appl:Knorrer-3}, Example \ref{ex:ADE-dim3}).

In the case of ordinary double points of arbitrary dimension, the only nontrivial
graded group for the topological filtration on $\Ksg_0(X)$
is in the middle codimension (Examples \ref{ex:A-ev}, \ref{ex:A-odd}). 
Returning to the relation between sheaves and algebraic cycles,
this predicts that all cycles in codimension up to half the dimension on varieties with ordinary double
points are ``cohomological''. For a very concrete example, note that normal varieties of dimension at least four
with ordinary double points are factorial, that is $\Pic(X) = \Cl(X)$.

In the example of an isolated quotient singularity, for instance, in the local case
the associated graded groups $\Gr^i \Ksg_0(\A^n/G)$ are typically nonzero in the range
$0 < i < n$ and are closely related to the $G$-equivariant Chow groups of a point \cite{edidingraham}.

\subsection*{3. Computing $\Kt_0(X \text{ on } \Sing(X))$: the Srinivas conjecture.}

Let $X$ be a quasi-projective variety with isolated singularities
over an algebraically closed field of characteristic zero.
In \cite{srinivas2} Srinivas introduced and studied the Grothendieck group of 
the exact category of coherent sheaves supported at the singular locus
and having finite projective dimension (i.e. perfect as complexes); 
for Cohen-Macaulay isolated singularities
this group is isomorphic to the Grothendieck group $\Kt_0(X \text{ on } \Sing(X))$ 
of the triangulated category of 
zero-dimensional perfect complexes supported at the singular points \cite[Proposition 2]{RS}.

There is a natural homomorphism 
\[
l: \Kt_0(X \text{ on } \Sing(X)) \to \Z^{\Sing(X)}
\]
induced by the length of the sheaf. 

We call the question whether $l$ is an isomorphism for isolated quotient singularities 
\emph{the Srinivas conjecture} (see \cite[Page 38]{srinivas2}). 
Levine has proved that $l$ is surjective 
for all isolated Cohen-Macaulay singularities \cite[Proposition 2.6]{levine}.
Furthermore, Levine proved that 
for isolated quotient singularities of dimension up to three in characterstic zero
$l$ is an isomorphism
\cite[Theorem 3.3]{levine}, and that it is an isomorphism up to torsion in general \cite[Theorem 2.7]{levine}.

On the other hand, it is known that $l$ is not always injective;
for instance for a three-dimensional quadric cone $xy = zw$, 
$\Ker(l) = \Z \oplus k^*$ \cite[Theorem 4.2]{levine}.

Using singularity $\Kt$-theory with supports we reprove surjectivity of $l$ and
deal with its injectivity.
Namely, we prove that $l$ is injective for isolated quotient singularities over an algebraically closed field of characteristic zero (see Proposition \ref{prop:Srinivas-quotient}); 
this is a direct consequence of the fact that $\Ksg_1(X) = 0$ for such singularities.
We also note that in our approach the surjectivity of $l$
follows from the topological filtration considerations in subsection 2 and illustrates
the interaction between homology and cohomology cycles in dimension zero: skyscraper sheaf
of a singular point (homology cycle) is represented by a class of a perfect complex (cohomology cycle).

In general we show that any example where $\Kt_0(X) \to \Gt_0(X)$ has nonzero kernel will automatically
have $\Ker(l) \ne 0$ (Remark \ref{rem:ker-l}).

\subsection*{4. Homological Bondal-Orlov localization conjecture.}

Given a variety $X$ with rational singularities and $\pi: Y \to X$ a resolution of singularities,
it is a natural question whether $\pi_*: \Db(Y) \to \Db(X)$ is essentially surjective and
whether there is an equivalence $\Db(Y) / \Ker(\pi_*) \simeq \Db(X)$, that is $\Db(X)$ is
a Verdier quotient of $\Db(Y)$; this question may be called 
the \emph{Bondal-Orlov localization conjecture} \cite{bondal-orlov-icm}.
As a side result, which is morally related,
in a certain sense dual to, but
not dependent on the singularity category, we prove that Bondal-Orlov localization
conjecture holds for quotient singularities (not necessarily isolated)
in characteristic zero (Theorem \ref{thm:res-of-sing-ess-surjective}). This implies 
in particular that $\pi_*: \Gt_0(Y) \to \Gt_0(X)$ is surjective, which is the ``dual'' 
statement to the injectivity $\pi^*: \Kt_0(X) \to \Kt_0(Y)$ for isolated quotient singularities
explained in subsection 1 above (but there is no logical link between the two statements).

In the more general setting, to the best of our knowledge it is not known whether the pushforward
morphism
\[
\pi_*: \Gt_0(Y) \to \Gt_0(X) 
\]
is surjective if $X$ is a variety with rational singularities over an algebraically closed field
and $\pi: Y \to X$ is a resolution.
We call this question, as well as the long $\Gt$-theory exact sequence
\[
\dots \to \Kt_i(\Ker(\pi_*)) \to \Gt_i(Y) \to \Gt_i(X) \to \dots \to \Kt_0(\Ker(\pi_*)) \to \Gt_0(Y) \to \Gt_0(X) \to 0
\]
predicted by the Bondal-Orlov conjecture the \emph{Homological Bondal-Orlov conjecture}
and we hope return to this question in the future.

\subsection*{Acknowledgements}

We would like to thank A. Betina, T. Bridgeland, J. Greenlees, A. Efimov, M. Kalck, J. Karmazyn, J. Kass, A. Kuznetsov, 
N. Pagani, D. Pomerleano, R. Potter, M. Schlichting,
P. Sechin, V. Srinivas, P. Stellari, 
G. Stevenson, B. Totaro, A. Vishik, V. Vologodsky and M. Wemyss for 
helpful discussions and e-mail communication.
E.S. was partially supported by Laboratory of Mirror Symmetry NRU HSE, RF government grant, ag. N~14.641.31.0001.

\subsection*{Notation and conventions}

Unless specified otherwise, the schemes we consider are quasi-projective over a field $k$,
however most results remain true in the generality of Orlov's (ELF) condition \cite{orlov-sing-1}.
Furthermore, the base field $k$ is assumed to have characteristic zero; however all general results in Section \ref{sec:singularity-cat} are true without this assumption.

If $G$ is a finite group, we write $\wh{G}$ for the group of characters $\Hom(G, k^*)$.
We write $\Z_n$ for the cyclic group of order $n$.

All triangulated and dg categories are assumed to be $k$-linear.
All functors such as pull-back $\pi^*$, pushforward $\pi_*$ and tensor product $\otimes$ 
when considered between derived categories are derived functors.

\section{Singularity $\Kt$-theory}

\label{sec:singularity-cat}

\subsection{Triangulated and dg singularity categories}

We start by introducing the category whose $\Kt$-theory we are going to study. Unless stated otherwise, $X$ is a quasi-projective
scheme over a field $k$.
We write $\Db(X)$ for the bounded derived category of coherent sheaves on $X$
and $\Dperf(X)$ for its subcategory of perfect complexes, which in the quasi-projective case
coincides with the subcategory of bounded complexes of locally free sheaves.

\begin{definition}[Buchweitz \cite{buchweitz}, Orlov \cite{orlov-sing-1}] The triangulated category of singularities of $X$ is the Verdier quotient
$$\Dsg(X):=\Db(X)/\Perf(X).$$
\end{definition}

As we will be interested in $\Kt$-theory of the singularity category, 
we need to specify a dg-enhancement for $\Dsg(X)$ to apply Schlichting's machinery of $\Kt$-theory
of dg-categories.

For that we first recall that $\Db(X)$ has a unique dg-enhancement, up to quasi-equivalence 
\cite{lunts-orlov, canonaco-stellari}.
We denote this dg-enhancement by $\Db_{dg}(X)$.
Considering the full dg-subcategory of perfect objects in $\Db_{dg}(X)$, we
get a dg-enhancement $\Dperf_{dg}(X)$ of $\Dperf(X)$. Finally, applying the Drinfeld quotient 
construction \cite{drinfeld} to the pair $\Dperf_{dg}(X) \subset \Db_{dg}(X)$ 
we get a dg-category $\Dsg_{dg}(X)$ which is a dg-enhancement for $\Dsg(X)$.

We note that even though the dg-enhancement of $\Dsg(X)$ may not be unique, our
choice is canonical in a way that all enhancements of $\Dsg(X)$ induced by an enhancement of
$\Db(X)$ are quasi-equivalent.

Similarly, we consider the singularity category with supports. For any closed $Z \subset X$ let
\[
\Dsg_Z(X) = \Db_Z(X)/\Dperf_Z(X).
\]
Here $\Db_Z(X)$ consists of complexes in $\Db(X)$ acyclic away from $Z$, and 
$\Dperf_Z(X) = \Db_Z(X) \cap \Dperf(X)$.
A dg-enhancement of $\Db(X)$ induces one for $\Db_Z(X)$, and using the Drinfeld quotient construction,
$\Dsg_Z(X)$ acquires a dg-enhancement $\Dsg_{Z,dg}(X)$.

We now list some properties of the singularity categories, which are due to Orlov.
Even though Orlov formulates these results on the triangulated level, they all lift to the dg-enhancements
due to the fact that all well-defined derived pull-back and push-forward functors 
lift to dg-enhancements of $\Db(X)$ and $\Perf(X)$ \cite{schnurer}.

\begin{proposition}[Orlov \cite{orlov-sing-1}]\label{prop:reduction-open-containing-sing}
\label{prop:orl-restr}
Let $j: U\subset X$ be an open embedding such that $\Sing(X)\subset U$.
Then
$$j^*:\Dsg(X)\xrightarrow{\sim} \Dsg(U)$$
is an equivalence, induced by a functor between dg-enhancements.
\end{proposition}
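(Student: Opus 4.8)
The plan is to realise $j^*$ as an instance of the third isomorphism theorem for Verdier quotients, the geometric input being that $Z := X \setminus U$ lies in the regular locus of $X$. So the first thing I would record is the consequence of the hypothesis $\Sing(X) \subset U$: every point of $Z$ is a regular point of $X$, so for $x \in Z$ the local ring $\OO_{X,x}$ is regular and every object of $\Db(X)$ with support contained in $Z$ is perfect at such $x$ (and it is trivially perfect off $Z$). Hence $\Db_Z(X) = \Dperf_Z(X)$ as thick subcategories of $\Db(X)$, and in particular $\Db_Z(X) \subseteq \Dperf(X)$. This is the single place where the regularity of $X$ along $Z$ enters, and it is exactly what makes the two localizations below compatible.

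Next I would invoke the two localization theorems along the open immersion $j$. On the coherent level, restriction is a Verdier localization $\Db(X)/\Db_Z(X) \xrightarrow{\sim} \Db(U)$: essential surjectivity follows since every coherent sheaf on $U$, hence every object of $\Db(U)$, extends to $X$ (here one uses that $X$ is Noetherian and quasi-projective), while the kernel of $j^*$ is precisely the subcategory of objects acyclic over $U$, namely $\Db_Z(X)$. On the perfect level, the Thomason--Trobaugh localization theorem identifies $\Dperf(U)$ with the thick closure in $\Db(U)$ of the image of $\Dperf(X)$ under $j^*$.

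I would then apply the third isomorphism theorem to the chain $\Db_Z(X) \subseteq \Dperf(X) \subseteq \Db(X)$, which is legitimate by the first step, to obtain
\[
\Dsg(X) = \frac{\Db(X)}{\Dperf(X)} \;\simeq\; \frac{\Db(X)/\Db_Z(X)}{\im\big(\Dperf(X)\big)} \;=\; \frac{\Db(U)}{\im\big(\Dperf(X)\big)},
\]
where $\im(\Dperf(X))$ denotes the image in $\Db(U)$. A Verdier quotient depends only on the thick closure of the subcategory being annihilated, so I may replace $\im(\Dperf(X))$ by its thick closure, which by Thomason--Trobaugh is all of $\Dperf(U)$; this gives $\Dsg(X) \simeq \Db(U)/\Dperf(U) = \Dsg(U)$, and unwinding the identifications shows the equivalence is induced by $j^*$. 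Since every functor in sight is a derived pull-back along $j$, which lifts to the dg-enhancements \cite{schnurer} and preserves perfect objects, and since the Drinfeld quotient is functorial for such data, the equivalence is realised by a dg-functor $\Dsg_{dg}(X) \to \Dsg_{dg}(U)$, as required.

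I expect the main obstacle to be the interplay of the two localizations, specifically the idempotent-completion discrepancy in the Thomason--Trobaugh theorem, where $\im(\Dperf(X))$ is only dense in $\Dperf(U)$ rather than equal to it. This is precisely what could derail the naive iterated-quotient argument, and it is resolved by the observation that a Verdier quotient sees only thick closures, so density is enough. The remaining work is the routine bookkeeping needed to keep every identification compatible with the chosen dg-enhancements, which the setup of this section has arranged in advance.
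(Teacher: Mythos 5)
Your argument is correct, and it is essentially the proof of the result in the cited source: the paper itself does not reprove this proposition but quotes it from Orlov \cite{orlov-sing-1}, whose argument is exactly your chain $\Db_Z(X)=\Dperf_Z(X)\subseteq\Dperf(X)$ for $Z=X\setminus U$, the third isomorphism theorem for Verdier quotients, and the Thomason--Trobaugh observation that $j^*\Dperf(X)$ is dense in $\Dperf(U)$ (every perfect complex on $U$ extends after adding a shifted copy of itself), combined with the fact that a Verdier quotient depends only on the thick closure of the subcategory being killed. You correctly identify the one genuinely delicate point (the idempotent-completion discrepancy in the localization theorem) and resolve it the same way; the dg-lifting via \cite{schnurer} and the functoriality of the Drinfeld quotient is also exactly how the paper justifies the enhancement statement.
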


\begin{theorem}[Kn\"orrer periodicity, Orlov \cite{orlov-sing-1}]\label{Thm:Knorrer} 
Let $X/k$ be a smooth quasi-projective scheme and let $f:X\to\A^1$ be a non-zero morphism. 
Define $g=f+xy:X\times\A^2\to\A^1$.
Let $Z_f=f^{-1}(\{0\})$ and $Z_g=g^{-1}(\left\lbrace 0\right\rbrace )$, 
and let $W=Z_f\times \{ 0 \} \times \A^1\subset X\times\A^2$. 
Furthermore, denote by $i:W\hookrightarrow Z_g$ the inclusion and $p:W\to Z_f$ the flat projection. Then
$$ i_*p^*:\Dsg(Z_f)\to \Dsg(Z_g)$$
is an equivalence of triangulated categories induced by a functor between dg-enhancements.
\end{theorem}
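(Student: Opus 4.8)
The plan is to split the proof into two essentially independent tasks: constructing the functor $i_* p^*$ together with its dg-lift, and then recognizing it as an equivalence through Orlov's matrix-factorization description.

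\emph{Construction and dg-lift.}
First I would verify that $i_* p^*$ descends to the singularity categories. The projection $p : W = Z_f \times \{0\} \times \A^1 \to Z_f$ is flat, so $p^*$ is exact and carries $\Db(Z_f)$ into $\Db(W)$ and $\Perf(Z_f)$ into $\Perf(W)$. The closed immersion $i : W \hookrightarrow Z_g$ is cut out by the coordinate function $x$, since $Z_g \cap \{x=0\} = \{f = 0,\ x = 0\} = W$; as $X$ is smooth and $f \neq 0$, the pair $(f + xy,\, x)$ is a regular sequence in the regular local rings of $X \times \A^2$ (it cuts out $W$ in codimension two), so $x$ is a non-zero-divisor on $Z_g$ and $i$ is a regular immersion of codimension one. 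In particular $i$ has finite Tor-dimension, whence $i_*$ preserves both $\Db$ and $\Perf$. Therefore $i_* p^*$ induces a triangulated functor $\Dsg(Z_f) \to \Dsg(Z_g)$. By Schn\"urer's theorem \cite{schnurer} that derived flat pull-backs and proper push-forwards lift to the dg-enhancements $\Db_{dg}$ and $\Dperf_{dg}$, together with functoriality of the Drinfeld quotient, the composite lifts to a dg-functor $\Dsg_{dg}(Z_f) \to \Dsg_{dg}(Z_g)$.

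\emph{The equivalence.}
For the equivalence itself I would pass to matrix factorizations. Over the smooth scheme $X$ there is Orlov's equivalence $\Dsg(Z_f) \simeq \operatorname{MF}(X,f)$ with the homotopy category of matrix factorizations $(E_0 \xrightarrow{a} E_1 \xrightarrow{b} E_0)$, $ab = ba = f \cdot \id$, sending such a factorization to $\coker(a)$, and likewise $\Dsg(Z_g) \simeq \operatorname{MF}(X \times \A^2, g)$ \cite{orlov-sing-1}. Under these identifications the classical Kn\"orrer functor is $- \otimes \kappa$ for the rank-one factorization $\kappa = (\OO \xrightarrow{x} \OO \xrightarrow{y} \OO)$ of $xy$: tensoring a factorization of $f$ with $\kappa$ yields a factorization of $f + xy = g$, and this is the standard equivalence. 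The geometric step is to identify $i_* p^*$ with $- \otimes \kappa$: pull-back along $p$ base-changes the factorization to $X \times \A^2$, while push-forward along the Cartier divisor $\{x = 0\}$ is computed by the Koszul complex $[\OO_{Z_g} \xrightarrow{x} \OO_{Z_g}]$, and assembling this with the factorization of $f$ reproduces the Kn\"orrer factorization of $g$. This is a local Koszul computation which may be carried out after restricting to affine opens, using Proposition \ref{prop:orl-restr} to shrink $X$ to a neighbourhood of $\Sing(Z_f)$.

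\emph{Main obstacle.}
I expect the genuine difficulty to lie not in the triangulated equivalence, which is Orlov's, but in the bookkeeping that makes it compatible with the dg-enhancements: one must check that the dg-functor assembled from $p^*$ and $i_*$ induces \emph{exactly} Orlov's equivalence on homotopy categories, and that the matrix-factorization model is itself compatible with the canonical enhancement of $\Db$. Since every functor in sight is an honest derived pull-back or push-forward, this should follow from the uniqueness of the dg-enhancement of $\Db(X)$ \cite{lunts-orlov, canonaco-stellari} together with Schn\"urer's functorial lifts; but it is precisely this compatibility, rather than any new homological input, where the care is required.
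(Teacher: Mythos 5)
The paper does not actually prove this statement: it is quoted from Orlov \cite{orlov-sing-1}, and the only input the paper itself supplies is the remark preceding Proposition \ref{prop:orl-restr}, namely that the functor in question is a composite of a flat pull-back and a proper push-forward of finite Tor-dimension and therefore lifts to the dg-enhancements by \cite{schnurer} together with functoriality of the Drinfeld quotient. Your first part is precisely this argument, carried out correctly: the observation that $(f+xy,\,x)$ is a regular sequence, so that $i$ is a codimension-one regular immersion and $i_*$ preserves perfect complexes, is exactly the justification needed for $i_*p^*$ to descend and to lift.

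Your second part, however, takes a genuinely different route from Orlov's, and it is there that a gap appears. Orlov's proof of Kn\"orrer periodicity in \cite{orlov-sing-1} is a direct argument using the adjoint of $i_*p^*$ and the two-periodic Koszul resolution of $\OO_W$ over $\OO_{Z_g}$; it does not pass through matrix factorizations, precisely because the equivalence $\Dsg(Z_f)\simeq\operatorname{MF}(X,f)$ is established in that paper only for affine $X$. Your proposed reduction to the affine case via Proposition \ref{prop:orl-restr} does not close this gap: the theorem makes no isolatedness assumption, so $\Sing(Z_f)$ may be positive-dimensional and need not admit an affine open neighbourhood in $X$; an open $U\supset\Sing(Z_f)$ to which one may shrink need not be affine. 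To run the matrix-factorization argument globally one needs the non-affine theory of locally free matrix factorizations, which is a later and substantially harder result than the one you cite. Your closing worry about matching the dg-functor against ``Orlov's equivalence'' is, by contrast, not a real issue: the theorem asserts that the specific functor $i_*p^*$ is an equivalence, so there is no separate identification to be made --- one only needs the triangulated statement (Orlov) plus the dg-lift of $i_*p^*$, which your first part already provides.
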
 

Recall that a full triangulated subcategory $\TT$ of a triangulated
category $\DD$ is called dense if any object in $\DD$ is a direct summand of an object in $\TT$.

\begin{proposition}[Orlov \cite{orlov-sing-2}]\label{prop:orl-supp}
For any closed subset $Z\subseteq X$, the induced functor
$$\Dsg_Z(X) \to \Dsg(X)$$
is fully faithful and is induced by a functor between dg-enhancements.
Furthermore, if $\Sing(X) \subset Z$, then this functor has a dense image.
\end{proposition}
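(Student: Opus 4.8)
The dg-enhancement assertion is formal: the inclusions $\Dperf_Z(X) \subset \Db_Z(X)$ and $\Dperf(X) \subset \Db(X)$ are compatible with the inclusion $\Db_Z(X)\subset\Db(X)$ of chosen dg-enhancements, so the Drinfeld quotient is functorial and produces a dg-functor $\Dsg_{Z,dg}(X) \to \Dsg_{dg}(X)$ inducing the triangulated functor in question; I would dispatch this in one line and concentrate on the two triangulated assertions. For full faithfulness I would first isolate the following purely formal criterion: if $\mathcal R \subset \mathcal T$ is thick and $\mathcal S \subset \mathcal T$ is a full triangulated subcategory, then $\mathcal S/(\mathcal S \cap \mathcal R) \to \mathcal T/\mathcal R$ is fully faithful provided every morphism $S \to R$ with $S \in \mathcal S$ and $R \in \mathcal R$ factors through an object of $\mathcal S \cap \mathcal R$. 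This is proved with the calculus of fractions: a morphism in $\mathcal T/\mathcal R$ is a roof $a \xleftarrow{s} c \xrightarrow{f} b$ with $\operatorname{cone}(s) \in \mathcal R$; using the factorization hypothesis on the connecting map $a \to \operatorname{cone}(s)$ one replaces the apex $c$ by an object of $\mathcal S$ (giving fullness), while a roof vanishes in $\mathcal T/\mathcal R$ exactly when $f$ factors through $\mathcal R$, and the same hypothesis turns this into a factorization through $\mathcal S \cap \mathcal R$ (giving faithfulness). Here $\mathcal T = \Db(X)$, $\mathcal R = \Dperf(X)$, $\mathcal S = \Db_Z(X)$ and $\mathcal S \cap \mathcal R = \Dperf_Z(X)$.

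The substantive point is therefore the factorization statement: every $g \colon S \to R$ with $S \in \Db_Z(X)$ and $R \in \Dperf(X)$ factors through a perfect complex supported on $Z$. I would prove this with local cohomology. Since $j^*S = 0$ for $j \colon U := X \setminus Z \hookrightarrow X$, the localization triangle $\mathbb{R}\Gamma_Z R \to R \to \mathbb{R}j_* j^* R$ shows that $\Hom(S, \mathbb{R}\Gamma_Z R) \to \Hom(S, R)$ is an isomorphism, so $g$ lifts canonically to $\tilde g \colon S \to \mathbb{R}\Gamma_Z R$ with $g$ recovered via the counit $\mathbb{R}\Gamma_Z R \to R$. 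Now $\mathbb{R}\Gamma_Z R$ is computed by the stable Koszul (\v{C}ech) complex and is thus an honest filtered colimit $\varinjlim_n T_n$ of perfect complexes $T_n \in \Dperf_Z(X)$, the structural maps to $R$ being the augmentations (after tensoring with the perfect dual $R^\vee$ one may even take $R = \mathcal O_X$, so that the $T_n$ are literally the Koszul complexes on local equations of $Z$). \textbf{The main obstacle} is to factor $\tilde g$ through a single stage $T_n$: this would be immediate if $S$ were compact in $D_{qc}(X)$, but a bounded coherent complex is compact there only when perfect. I would circumvent this by exploiting that the presentation is a genuine filtered colimit of complexes together with the fact that, over a Noetherian scheme, coherent sheaves are precisely the compact objects of $\Qcoh(X)$; combined with the boundedness of both $S$ and $\mathbb{R}\Gamma_Z R$, a d\'evissage on the finitely many coherent cohomology sheaves of $S$ yields $\Hom(S, \varinjlim_n T_n) = \varinjlim_n \Hom(S, T_n)$, so $\tilde g$, and hence $g$, factors through some $T_n \in \Dperf_Z(X)$.

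For the density statement I would use the hypothesis $\Sing(X) \subset Z$, which makes $U$ regular, so that $E|_U$ is perfect for every $E \in \Db(X)$. Given such $E$, the class $[E|_U \oplus E[1]|_U] = 0$ in $\Kt_0(U)$ lifts trivially to $\Kt_0(X)$, so by Thomason--Trobaugh's theorem on extension of perfect complexes and morphisms from the open $U$ there is a perfect complex $P$ on $X$ and a morphism $\phi \colon P \to E \oplus E[1]$ whose restriction to $U$ is an isomorphism. Then $C = \operatorname{cone}(\phi)$ restricts to zero on $U$, i.e. $C \in \Db_Z(X)$, and the triangle $P \to E \oplus E[1] \to C$ together with $P \simeq 0$ in $\Dsg(X)$ gives $E \oplus E[1] \simeq C$ in $\Dsg(X)$; hence the image of $E$ is a direct summand of $C$, which lies in the image of $\Dsg_Z(X) \to \Dsg(X)$. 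I expect the passage to $E \oplus E[1]$ to be unavoidable: the Thomason--Trobaugh obstruction to extending $E|_U$ itself is exactly why one obtains a \emph{dense} image rather than an essentially surjective functor. A more conceptual alternative would be to identify $\Dsg(X)/\Dsg_Z(X)$ with $\Dsg(U)$ up to idempotent completion and invoke $\Dsg(U) = 0$ for $U$ regular (compare Proposition \ref{prop:orl-restr}); this reduces to the same Thomason--Trobaugh input.
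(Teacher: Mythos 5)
The paper offers no proof of this Proposition: it is imported verbatim from Orlov's \emph{Formal completions and idempotent completions of triangulated categories of singularities}, with the dg-enhancement clause covered only by the blanket remark preceding the statement. So the comparison is really with Orlov's argument, and your proof is essentially a correct reconstruction of it: full faithfulness via Verdier's criterion, reduced to factoring every map $S \to R$ with $S \in \Db_Z(X)$, $R \in \Dperf(X)$ through $\Dperf_Z(X)$ by lifting along $R\Gamma_Z R \to R$ and presenting $R\Gamma_Z R$ as a filtered colimit of Koszul-type objects of $\Dperf_Z(X)$; and density via the Thomason--Trobaugh extension theorem applied to $E|_U \oplus E[1]|_U$, whose class in $\Kt_0(U)$ vanishes --- this is exactly Orlov's device, and your observation that the passage to $E \oplus E[1]$ is precisely why one gets a dense image rather than essential surjectivity is the right way to think about it. The one step to tighten is the interchange $\Hom(S, \varinjlim_n T_n) \simeq \varinjlim_n \Hom(S, T_n)$: compactness of coherent sheaves in $\Qcoh(X)$ only handles $\Hom$ in the abelian category, whereas you need every $\Ext^i(\FF,-)$ with $\FF$ coherent to commute with filtered colimits on a Noetherian scheme (true, but it requires the local-to-global spectral sequence or a similar argument), followed by a finite d\'evissage using the uniform boundedness of both $S$ and the $T_n$; alternatively one can avoid the colimit altogether by quoting, as Orlov does, that $\Dperf_Z(X)$ is the subcategory of compact objects of the compactly generated category $D_Z(\Qcoh(X))$. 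Two further cosmetic points: the ``local equations of $Z$'' should globally be finitely many sections of powers of an ample line bundle vanishing exactly on $Z$ (harmless for quasi-projective $X$), and Verdier's criterion is usually stated for maps from the quotienting subcategory into $\Db_Z(X)$ --- the direction you use is its dual and is equally valid. With these standard repairs your argument is complete and matches the cited source.
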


If $\DD$ is a triangulated category, then we write $\ol{\DD}$
for the idempotent completion
of $\DD$; $\ol{\DD}$
is a triangulated category by \cite{balmer-schlichting}.
If $\DD$ admits a dg-enhancement, then the fully faithful functor $\DD \subset \ol{\DD}$
is induced by a dg-functor (see e.g. \cite[1.6.2]{beil-vol}).

\begin{theorem}[Orlov \cite{orlov-sing-2}]\label{theorem:idemp-formal-completion}
Assume that the formal completions of $X$ and $X'$ along $\Sing(X)$ and $\Sing(X')$ respectively
are isomorphic. 
Then we have equivalences
\[
\Dsg_{\Sing(X)}(X) \simeq \Dsg_{\Sing(X')}(X')
\]
and
\[
\ol{\Dsg(X)} \simeq \ol{\Dsg(X')}
\]
induced by functors between dg-enhancements.
\end{theorem}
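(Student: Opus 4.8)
My plan is to reduce the second equivalence to the first, and to prove the first by a formal-completion invariance argument. For the reduction, apply Proposition \ref{prop:orl-supp} with $Z=\Sing(X)$: since trivially $\Sing(X)\subseteq Z$, the fully faithful functor $\Dsg_{\Sing(X)}(X)\to\Dsg(X)$ has dense image, and a fully faithful functor with dense image becomes an equivalence after idempotent completion, so $\ol{\Dsg_{\Sing(X)}(X)}\simeq\ol{\Dsg(X)}$, and likewise for $X'$. Granting the first equivalence $\Dsg_{\Sing(X)}(X)\simeq\Dsg_{\Sing(X')}(X')$ and applying $\ol{(-)}$, the desired equivalence $\ol{\Dsg(X)}\simeq\ol{\Dsg(X')}$ follows by composing these identifications; since idempotent completion is a dg construction \cite[1.6.2]{beil-vol}, dg-functors inducing the first equivalence yield dg-functors inducing the second. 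So I would concentrate on the categories with supports.

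For those the plan is to show that $\Dsg_{\Sing(X)}(X)$ depends only on the formal completion of $X$ along $\Sing(X)$. Write $Z=\Sing(X)$ and let $\mathcal I\subset\OO_X$ be the ideal of $Z$. The starting observation is that a coherent sheaf $\mathcal F$ with $\Supp\mathcal F\subseteq Z$ is annihilated by some power $\mathcal I^n$, so the abelian category $\Coh_Z(X)$ of coherent sheaves supported on $Z$ coincides with $\bigcup_n\Coh\big(V(\mathcal I^n)\big)$, the modules over the infinitesimal thickenings of $Z$. This category, together with its exact structure, depends only on the inductive system $\{V(\mathcal I^n)\}_n$, that is, only on the formal completion $\wh X$. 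Thus the given isomorphism $\wh X\simeq\wh{X'}$ produces an exact equivalence $\Coh_{\Sing(X)}(X)\simeq\Coh_{\Sing(X')}(X')$, which extends to an equivalence $\Db_{\Sing(X)}(X)\simeq\Db_{\Sing(X')}(X')$ of the bounded derived categories of complexes with cohomology supported on the singular loci, using the standard identification of the latter with the derived category of the supported heart.

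It remains to check that this equivalence identifies $\Dperf_{\Sing(X)}(X)$ with $\Dperf_{\Sing(X')}(X')$, after which passing to Verdier quotients gives the equivalence of singularity categories. Here is the subtle point, and the one I expect to be the main obstacle: perfectness of an object of $\Db_Z(X)$ is a condition on its image in $\Db(X)$, measured over the full local rings $\OO_{X,x}$, whereas the equivalence above a priori only remembers the finite-order thickenings $V(\mathcal I^n)$, and no single such thickening determines $\OO_{X,x}$. The reconciliation is that completion at a point of $Z$ is faithfully flat, so a complex of coherent sheaves with cohomology supported on $Z$ is perfect over $\OO_{X,x}$ if and only if its completion is perfect over $\wh\OO_{X,x}$; and the complete local rings $\wh\OO_{X,x}=\varprojlim_n\OO_{V(\mathcal I^n),x}$, together with the completed complexes, are exactly the data carried by $\wh X$. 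In other words the formal completion records \emph{precisely} enough data — more than any single thickening, yet still purely local — to detect perfectness, and this is what forces perfect complexes to match. Therefore $\Dsg_{\Sing(X)}(X)=\Db_{\Sing(X)}(X)/\Dperf_{\Sing(X)}(X)\simeq\Db_{\Sing(X')}(X')/\Dperf_{\Sing(X')}(X')=\Dsg_{\Sing(X')}(X')$.

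Finally, every functor used should be promoted from a dg-functor. The uniqueness of the dg-enhancement of $\Db$ \cite{lunts-orlov, canonaco-stellari} and the fact that the relevant exact and pullback functors lift to dg-enhancements \cite{schnurer} upgrade the exact equivalence of hearts to a dg-quasi-equivalence, and the Drinfeld quotient defining $\Dsg_{Z}(-)$ is functorial for such dg-functors; combined with the dg idempotent completion \cite[1.6.2]{beil-vol}, all the equivalences above are induced by functors between dg-enhancements, as required.
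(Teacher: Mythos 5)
Your proposal is correct, but note that the paper itself contains no proof of this statement: it is quoted from Orlov \cite{orlov-sing-2}, with only the blanket remark preceding Proposition \ref{prop:orl-restr} that such triangulated results lift to dg-enhancements by \cite{schnurer} together with the uniqueness of enhancements \cite{lunts-orlov, canonaco-stellari}. Measured against Orlov's own argument, your sketch is a faithful reconstruction of most of it: the reduction of the second equivalence to the first via Proposition \ref{prop:orl-supp} and idempotent completion is exactly his, and identifying $\Db_{\Sing(X)}(X)$ with the derived category of torsion coherent sheaves on the formal completion, via $\Coh_{\Sing(X)}(X)=\bigcup_n\Coh(V(\mathcal I^n))$ and the standard equivalence $\Db(\Coh_Z(X))\simeq\Db_Z(X)$, is his key step (resting on the theorem of Alonso Tarr\'io, Jerem\'ias L\'opez and Lipman). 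Where you genuinely diverge is the matching of the perfect subcategories: Orlov characterizes $\Dperf_Z(X)\subset\Db_Z(X)$ intrinsically as the homologically finite objects, so that \emph{any} exact equivalence $\Db_Z(X)\simeq\Db_{Z'}(X')$ automatically restricts to them, whereas you detect perfection by faithfully flat descent along $\OO_{X,x}\to\wh{\OO}_{X,x}$. Your route is sound --- for a bounded complex of coherent sheaves with torsion cohomology, finite Tor-dimension is read off from $\Tor_i(-,k(x))$ at closed points, and these groups are unchanged under the faithfully flat completion, whose data the formal scheme carries --- but it buys less: it works only for the particular equivalence induced by the isomorphism of formal completions and needs a stalkwise compatibility check, while the intrinsic characterization makes this step formal and equivalence-independent. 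The dg-lifting part of your argument is at the same level of detail as the paper's own remark.
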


In light of our interest in idempotent completions we will also need the following
celebrated result by Thomason.

\begin{theorem}[Thomason, Theorem 2.1 in \cite{thomason}]\label{thm:thomason} 
Let $\mathcal{D}$ be an essentially small triangulated category, then there is a one-to-one correspondence
$$\lbrace\mathcal{T}\subseteq\mathcal{D}\mid\mathcal{T}\textup{ dense strictly full triang. subcat.}\rbrace\stackrel{\mathrm{1:1}}{\longleftrightarrow}\lbrace H\subseteq \Kt_0(\mathcal{D})\mid H\textup{ subgroup}\rbrace.$$
The correspondence sends strictly full dense subcategories $\mathcal{T}\subseteq\mathcal{D}$ to the image of $\Kt_0(\mathcal{T})$ in $\Kt_0(\mathcal{D})$ and the inverse sends a subgroup $H$ of $\Kt_0(\mathcal{D})$ to the full triangulated subcategory $\mathcal{D}_H$, where $\mathcal{D}_H:=\lbrace A\in\mathcal{D}\mid [A]\in H\subseteq \Kt_0(\mathcal{D})\rbrace$.
\end{theorem}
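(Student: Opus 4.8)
The plan is to verify that the two assignments $\mathcal{T} \mapsto H_{\mathcal{T}} := \im(\Kt_0(\mathcal{T}) \to \Kt_0(\mathcal{D}))$ and $H \mapsto \mathcal{D}_H$ are well-defined and mutually inverse. I would organize everything around one elementary observation and one key lemma. The observation is that every element of $\Kt_0(\mathcal{D})$ is the class of a single object: writing a general element as $[P] - [Q]$ (using $[A]+[A']=[A\oplus A']$) and then as $[P] + [Q[1]] = [P \oplus Q[1]]$ does it, since $[Q[1]] = -[Q]$. The same argument inside $\mathcal{T}$ shows every element of $H_{\mathcal{T}}$ is $[B]$ for a single $B \in \mathcal{T}$.

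First I would dispose of the easy formal points. The subcategory $\mathcal{D}_H$ is strictly full, since membership depends only on the class and isomorphic objects have equal classes; it is closed under shifts because $[A[1]] = -[A] \in H$, and under cones because a triangle $A \to B \to C$ forces $[C] = [B] - [A] \in H$. Thus $\mathcal{D}_H$ is triangulated. For density I would use the observation above: for any $A$ the object $A \oplus A[1]$ has class $[A] - [A] = 0 \in H$, so it lies in $\mathcal{D}_H$, and $A$ is a direct summand of it. Hence $\mathcal{D}_H$ is a dense strictly full triangulated subcategory, and $H \mapsto \mathcal{D}_H$ lands in the correct set.

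Next I would check one composite directly. The image of $\Kt_0(\mathcal{D}_H)$ is contained in $H$ by the very definition of $\mathcal{D}_H$; conversely, any $h \in H$ equals $[A]$ for a single object $A$, whence $A \in \mathcal{D}_H$ and $h$ lies in the image. Thus $H_{\mathcal{D}_H} = H$. The inclusion $\mathcal{T} \subseteq \mathcal{D}_{H_{\mathcal{T}}}$ is trivial, so the entire content of the theorem is concentrated in the reverse inclusion $\mathcal{D}_{H_{\mathcal{T}}} \subseteq \mathcal{T}$, namely that an object $M$ with $[M] \in H_{\mathcal{T}}$ already lies in $\mathcal{T}$. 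I would reduce this to a cancellation statement: by density of $\mathcal{T}$ choose $N$ with $M \oplus N \in \mathcal{T}$, so $[N] = [M \oplus N] - [M] \in H_{\mathcal{T}}$; writing $[N] = [B]$ with $B \in \mathcal{T}$ and setting $L := N \oplus B[1]$, one gets $[L] = 0$ and $M \oplus L = (M \oplus N) \oplus B[1] \in \mathcal{T}$.

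Everything then rests on the \emph{key lemma}: if $M \oplus L \in \mathcal{T}$ and $[L] = 0$ in $\Kt_0(\mathcal{D})$, then $M \in \mathcal{T}$. This is the main obstacle. To prove it I would invoke the standard presentation of the relation $[L] = 0$ in the Grothendieck group of a triangulated category by a finite list of distinguished triangles, and then manipulate these triangles together with $M \oplus L \in \mathcal{T}$ and the closure of $\mathcal{T}$ under cones (and under the class-zero objects $X \oplus X[1]$) so as to peel off the summand $L$ while never leaving $\mathcal{T}$. The delicate point, and where all the difficulty lies, is that $\mathcal{T}$ is not assumed thick, so summands cannot be extracted for free; the vanishing $[L] = 0$ is precisely what compensates for the missing thickness and makes the finite cancellation possible. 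Once the key lemma is established, combining it with the reduction above gives $\mathcal{D}_{H_{\mathcal{T}}} = \mathcal{T}$, and together with $H_{\mathcal{D}_H} = H$ this proves the bijection.
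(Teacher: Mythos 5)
The paper does not prove this statement at all: it is quoted verbatim as Thomason's Theorem 2.1 and used as a black box, so there is no in-paper argument to compare yours against. Judged on its own, your proposal gets all of the formal reductions right, and they are the standard ones: every class in $\Kt_0$ is the class of a single object, $\mathcal{D}_H$ is a dense strictly full triangulated subcategory, $H_{\mathcal{D}_H}=H$, and the whole theorem collapses to showing $\mathcal{D}_{H_{\mathcal{T}}}\subseteq\mathcal{T}$, which you correctly reduce to the cancellation statement: if $M\oplus L\in\mathcal{T}$ and $[L]=0$ in $\Kt_0(\mathcal{D})$ then $M\in\mathcal{T}$.

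The genuine gap is that this cancellation statement is the theorem, and you do not prove it; you only gesture at ``invoking the standard presentation of the relation $[L]=0$ by a finite list of triangles and manipulating them to peel off $L$.'' There is no straightforward way to carry that out: the relation $[L]=0$ is witnessed by a chain of triangle relations whose vertices are arbitrary objects of $\mathcal{D}$, not of $\mathcal{T}$, and since $\mathcal{T}$ is not thick you cannot extract summands along the way. Thomason's actual argument requires two nontrivial inputs that are absent from your sketch: (i) a clean characterization of when $[X]=[Y]$ in $\Kt_0$ of a triangulated category, namely the existence of an object $Z$ and of distinguished triangles $W\to X\oplus Z\to V\to W[1]$ and $W\to Y\oplus Z\to V\to W[1]$ with common $W$, $V$ (this is where the ``finite list of relations'' is actually organized into something usable, via the octahedral axiom); and (ii) the observation that for a dense $\mathcal{T}$ and any $B\in\mathcal{D}$ one has $B\oplus B[1]\in\mathcal{T}$, obtained as the cone of a map between two objects of $\mathcal{T}$, which is what lets one correct the auxiliary objects $Z$, $W$, $V$ so that they lie in $\mathcal{T}$ before taking cones. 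Without supplying these (or an equivalent mechanism), the proof is incomplete precisely at the point you yourself identify as carrying all the difficulty.
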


\subsection{$\Kt$-Theory of the singularity category}

Schlichting's construction of the $\Kt$-theory 
spectrum \cite{schlichting, schlichting-2} can be applied
to produce $\Kt$-theory groups $\K_i(\CC)$, $i \in \Z$ for
a $k$-linear pretriangulated dg-category $\CC$. 

The $\K_i$ groups are covariantly functorial for dg-functors;
we summarize their properties as follows. 
For a pretriangulated dg-category $\CC$
we write $H^0(\CC)$ for its triangulated homotopy category.

\begin{enumerate}
 \item[(0)] $\K_0(\CC)$ is the Grothendieck group of the idempotent completion of $H^0(\CC)$.
 \item If $\CC \to \CC'$ induces a fully faithful embedding $H^0(\CC) \to H^0(\CC')$
 with a dense image, then all $\K_i(\CC) \to \K_i(\CC')$ are isomorphisms. 
 \item If $\AA \to \BB \to \CC$ induces a fully faithful embedding 
 $H^0(\AA)\to H^0(\BB)$ such that $H^0(\AA)$ is the kernel of $H^0(\BB)\to H^0(\CC)$ and a
 fully faithful functor $H^0(\BB)/H^0(\AA) \to H^0(\CC)$
 with a dense image, then there is a long exact sequence 
 \[
   \dots \to \K_i(\AA) \to \K_i(\BB) \to \K_i(\CC) \to \K_{i-1}(\AA) \to \dots
 \]
 \item $\K_i(\Db_{dg}(X))$ are isomorphic to $\Gt_i(X)$, Quillen's $\Gt$-theory, that is $\Kt$-theory of coherent sheaves.
 In particular, $\K_i(\Db(X)) = 0$ for $i < 0$.
 \item $\K_i(\Dperf_{dg}(X))$ are isomorphic to $\Kt_i(X)$, the Thomason-Trobaugh $\Kt$-theory \cite{tt},
 which under our assumptions on $X$ (quasi-projective scheme over a field)
 are isomorphic to Quillen's $\Kt$-theory of vector bundles.
\end{enumerate}

\begin{remark}
It is well-known that $\Kt$-theory of triangulated categories satisfying the axioms analogous
to those listed above cannot be defined \cite{schlichting-example}; the counterexample
is provided by the two singularity categories of schemes 
which are equivalent as triangulated categories but are forced to have non-isomorphic higher $\Kt$-groups
if one assumes the long exact $\Kt$-theory sequence for Verdier quotients of triangulated categories.
\end{remark}

\begin{definition}
We define the singularity $\Kt$-theory groups of $X$ by
\[
\Ksg_i(X) = \left\{\begin{array}{cc}
                  \K_i(\Dsg_{dg}(X)), & i \ne 0 \\
                  \Kt_0(\Dsg_{dg}(X)), & i = 0 
                \end{array}
\right. 
\]
and we also define $\KKsg_0(X) = \K_0(\Dsg(X))$.
\end{definition}

\begin{remark}\label{rem:KK}
 We make a special consideration for $i = 0$ since by property (0) above
$\KKsg_0(X)$ is in fact the Grothendieck group of the idempotent completion of
$\ol{\Dsg(X)}$, and not of $\Dsg(X)$ itself. 
By Theorem \ref{thm:thomason} we have $\Ksg_0(X) \subset \KKsg_0(X)$.
On the other hand by property (1) it is true that for $i \ne 0$, $\Ksg_i(X) = \K_i({\Dsg_{dg}(X)})
\simeq \K_i(\ol{\Dsg_{dg}(X)})$.
\end{remark}

Let us write $\can: \Kt_i(X) \to \Gt_i(X)$ for the canonical ``Poincar\'e duality''
morphism induced by 
$\Dperf_{dg}(X) \subset \Db_{dg}(X)$.
Our main motivation in defining the singularity $\Kt$-theory is for studying this map.

\begin{lemma}[Singularity $\Kt$-theory exact sequences]\label{lem:sing-K-th}
We have exact sequences 
\begin{equation}\label{eq:seq-k}
   \dots \to \Kt_i(X) \overset{\can}{\to} \Gt_i(X) \to \Ksg_i(X) \to \dots \to \Kt_0(X) 
   \overset{\can}{\to} \Gt_0(X) \to \Ksg_0(X) \to 0,
\end{equation}
\begin{equation}\label{eq:k_-1}
0 \to \Ksg_0(X) \to \KKsg_0(X) \to \Kt_{-1}(X) \to 0.
\end{equation}
and isomorphisms for $j \ge 1$
\begin{equation}\label{eq:k_-j}
\Ksg_{-j}(X) \simeq \Kt_{-j-1}(X).
\end{equation}
\end{lemma}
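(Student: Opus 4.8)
The plan is to derive all three statements from the formal properties (1)–(5) of Schlichting $\Kt$-theory applied to the defining sequence of dg-categories. The key input is that the triple
\[
\Dperf_{dg}(X) \subset \Db_{dg}(X) \to \Dsg_{dg}(X)
\]
satisfies the hypotheses of property (2): the inclusion $\Dperf(X)\subset\Db(X)$ is fully faithful, its image is precisely the kernel of the Verdier quotient functor $\Db(X)\to\Dsg(X)$ (this is the defining property of the Verdier quotient), and the induced functor $\Db(X)/\Dperf(X)\to\Dsg(X)$ is an equivalence (again by definition, so certainly fully faithful with dense image). Hence property (2) yields a long exact sequence
\[
\dots \to \K_i(\Dperf_{dg}(X)) \to \K_i(\Db_{dg}(X)) \to \K_i(\Dsg_{dg}(X)) \to \K_{i-1}(\Dperf_{dg}(X)) \to \dots.
\]

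**Next I would** translate this into the stated notation using properties (3), (4), (5). By property (5), $\K_i(\Dperf_{dg}(X)) = \Kt_i(X)$, and by properties (3)–(4), $\K_i(\Db_{dg}(X)) = \Gt_i(X)$, which moreover vanishes for $i<0$. For $i \ge 1$ the third term $\K_i(\Dsg_{dg}(X))$ equals $\Ksg_i(X)$ by definition, so the sequence reads exactly as the upper portion of \eqref{eq:seq-k}. The connecting map $\Kt_i(X)\to\Gt_i(X)$ is induced by the dg-inclusion $\Dperf_{dg}(X)\subset\Db_{dg}(X)$, which is precisely the map we have named $\can$. To obtain the tail of \eqref{eq:seq-k}, I treat the degree-$0$ position carefully: the relevant segment of the long exact sequence involves $\K_0(\Dsg_{dg}(X))=\KKsg_0(X)$, but the definition sets $\Ksg_0(X)=\Kt_0(\Dsg_{dg}(X))$. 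Here I invoke the identification of $\Ksg_0(X)$ with the cokernel of $\can\colon\Kt_0(X)\to\Gt_0(X)$, which follows because the image of $\Gt_0(X)=\K_0(\Db_{dg}(X))$ in $\KKsg_0(X)$ is the Grothendieck group of $\Dsg(X)$ itself (before idempotent completion), and by property (0) together with Thomason's Theorem \ref{thm:thomason} this image is exactly $\Ksg_0(X)\subset\KKsg_0(X)$. Right-exactness at the $0$-end then comes from $\Gt_0(X)\to\Ksg_0(X)$ being surjective by construction.

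**For \eqref{eq:k_-1} and \eqref{eq:k_-j}**, I examine the long exact sequence just below degree $0$. The relevant portion is
\[
\Gt_0(X) \to \KKsg_0(X) \to \Kt_{-1}(X) \to \Gt_{-1}(X).
\]
Since $\Gt_{-1}(X)=\K_{-1}(\Db_{dg}(X))=0$ by property (4), the map $\KKsg_0(X)\to\Kt_{-1}(X)$ is surjective; its kernel is the image of $\Gt_0(X)$, which as just argued is $\Ksg_0(X)$. This gives the short exact sequence \eqref{eq:k_-1}. For \eqref{eq:k_-j} with $j\ge 1$, the relevant segment is
\[
\Gt_{-j}(X) \to \K_{-j}(\Dsg_{dg}(X)) \to \Kt_{-j-1}(X) \to \Gt_{-j-1}(X),
\]
and since $\Gt_{-j}(X)=\Gt_{-j-1}(X)=0$ by property (4), the middle map is an isomorphism; by the definition of $\Ksg_{-j}(X)$ for $j\ge 1$ (where $i\ne 0$), this reads $\Ksg_{-j}(X)\simeq\Kt_{-j-1}(X)$.

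**The main obstacle** is the bookkeeping at degree $0$, where the general long exact sequence naturally produces $\KKsg_0(X)$ (the Grothendieck group of the idempotent completion) rather than $\Ksg_0(X)$. The care needed is to justify, via Remark \ref{rem:KK} and Theorem \ref{thm:thomason}, that the image of $\Gt_0(X)$ in $\KKsg_0(X)$ is precisely the subgroup $\Ksg_0(X)$, so that \eqref{eq:seq-k} terminates correctly and \eqref{eq:k_-1} splices the two pieces together. Everything else is a direct substitution of properties (3)–(5) into the long exact sequence supplied by property (2).
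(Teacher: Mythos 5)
Your proposal is correct and follows essentially the same route as the paper: the paper's (one-sentence) proof likewise applies the localization long exact sequence of Schlichting $\Kt$-groups to the triple $\Dperf_{dg}(X)\subset\Db_{dg}(X)\to\Dsg_{dg}(X)$, identifies the terms via the listed properties, and handles degree zero by the observation that the image of $\Gt_0(X)$ in $\KKsg_0(X)$ is exactly $\Ksg_0(X)$ (Remark \ref{rem:KK} and Theorem \ref{thm:thomason}). Your careful splicing at degree zero and the use of $\Gt_{-j}(X)=0$ to extract \eqref{eq:k_-1} and \eqref{eq:k_-j} is precisely the intended argument.
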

\begin{proof}
The statement follows from a single $\Kt$-theory sequence using the properties
of Schlichting $\Kt$-groups given above and
the fact that the image of $\Gt_0(X)$ in $\KKsg_0(X)$ is $\Ksg_0(X)$.
\end{proof}

We record the following well-known result:

\begin{lemma}\label{lemma:idempotent-K-1}
$\Dsg(X)$ is idempotent complete if and only if $\Kt_{-1}(X) = 0$.
\end{lemma}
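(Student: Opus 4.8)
The plan is to read the result off the exact sequence \eqref{eq:k_-1} together with Thomason's classification of dense subcategories (Theorem \ref{thm:thomason}). The guiding idea is that idempotent completeness of $\Dsg(X)$ is a statement about the canonical embedding $\Dsg(X) \hookrightarrow \ol{\Dsg(X)}$ being an equivalence, and that this embedding is controlled on the level of Grothendieck groups by the pair $\Ksg_0(X) \subseteq \KKsg_0(X)$ appearing in \eqref{eq:k_-1}.

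First I would recall that, by definition of the idempotent completion, the embedding $\Dsg(X) \hookrightarrow \ol{\Dsg(X)}$ realizes $\Dsg(X)$ (more precisely, its essential image $\mathcal{T}$) as a dense triangulated subcategory: every object of $\ol{\Dsg(X)}$ is a direct summand of an object coming from $\Dsg(X)$. Moreover $\Dsg(X)$ is idempotent complete exactly when this embedding is essentially surjective, that is, when $\mathcal{T} = \ol{\Dsg(X)}$.

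Next I would apply Theorem \ref{thm:thomason} to the essentially small triangulated category $\mathcal{D} = \ol{\Dsg(X)}$, whose Grothendieck group is $\KKsg_0(X)$ by Remark \ref{rem:KK}. Under the correspondence, the dense subcategory $\mathcal{T}$ corresponds to the subgroup $\im(\Kt_0(\mathcal{T}) \to \KKsg_0(X))$; since $\mathcal{T} \simeq \Dsg(X)$, this image is precisely $\Ksg_0(X)$, the subgroup sitting inside $\KKsg_0(X)$ in \eqref{eq:k_-1}. By bijectivity of the correspondence, $\mathcal{T} = \ol{\Dsg(X)}$ if and only if $\Ksg_0(X) = \KKsg_0(X)$. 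Finally, the exact sequence \eqref{eq:k_-1} identifies the cokernel $\KKsg_0(X)/\Ksg_0(X)$ with $\Kt_{-1}(X)$, so $\Ksg_0(X) = \KKsg_0(X)$ holds if and only if $\Kt_{-1}(X) = 0$. Chaining these equivalences yields the Lemma.

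The one step requiring care — and the only genuine obstacle — is the identification in the previous paragraph: one must check that the essential image $\mathcal{T}$ is a strictly full dense triangulated subcategory to which Theorem \ref{thm:thomason} applies, and that its associated subgroup is $\Ksg_0(X)$ rather than some a priori different group. This is where the precise bookkeeping of \emph{which} Grothendieck group ($\Kt_0$ of $\Dsg(X)$ versus of its idempotent completion) enters, exactly the distinction flagged in Remark \ref{rem:KK}; everything else is formal manipulation of the two already-established inputs.
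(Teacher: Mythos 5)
Your argument is correct and is essentially the paper's own proof: both use the exact sequence \eqref{eq:k_-1} to translate the vanishing of $\Kt_{-1}(X)$ into the equality $\Ksg_0(X) = \KKsg_0(X)$, and then invoke Thomason's Theorem \ref{thm:thomason} to identify the dense subcategory $\Dsg(X) \subseteq \ol{\Dsg(X)}$ with the whole category exactly when the corresponding subgroup is everything. Your extra care about which Grothendieck group is which (Remark \ref{rem:KK}) is the right point to flag, but it does not change the route.
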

\begin{proof}
Using \eqref{eq:k_-1} we see that
vanishing of $\Kt_{-1}(X)$ is equivalent to $\Kt_0(\Dsg(X)) = \Kt_0(\ol{\Dsg(X)})$
which implies $\Dsg(X) = \ol{\Dsg(X)}$ by the theorem of Thomason 
(Theorem $\ref{thm:thomason}$). 
\end{proof}

Similarly to the definition of the singularity $\Kt$-theory, for every closed $Z \subset X$
we consider the singularity $\Kt$-theory with supports defined by
\[
\Ksg_i(X \text{ on } Z) = \left\{\begin{array}{cc}
                  \K_i(\Dsg_{Z,dg}(X)), & i \ne 0 \\
                  \Kt_0(\Dsg_{Z,dg}(X)), & i = 0 
                \end{array}
\right. 
\]

\begin{lemma}\label{lem:K-th-supp}
If $\Sing(X) \subset Z$, then we have natural isomorphisms
$\Ksg_i(X \text{ on } Z) \simeq \Ksg_i(X)$ for $i \ne 0$
and $\Ksg_0(X \text{ on } Z) \to \Ksg_0(X)$ is injective.
\end{lemma}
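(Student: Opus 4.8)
The plan is to deduce every assertion from Orlov's Proposition \ref{prop:orl-supp} by feeding it into the formal properties (0) and (1) of Schlichting's $\K$-groups, supplemented by Thomason's theorem at $i=0$. Since $\Sing(X)\subset Z$, Proposition \ref{prop:orl-supp} furnishes a single piece of input: a comparison dg-functor $\Dsg_{Z,dg}(X)\to\Dsg_{dg}(X)$ whose induced functor on homotopy categories $\Dsg_Z(X)\to\Dsg(X)$ is fully faithful with dense image. Naturality throughout will be automatic, as all maps are induced by this one dg-functor.

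For $i\neq 0$ I would invoke property (1) verbatim: a dg-functor inducing a fully faithful embedding with dense image on homotopy categories induces isomorphisms on all Schlichting groups. By the definition of singularity $\K$-theory with supports this reads $\Ksg_i(X\text{ on }Z)=\K_i(\Dsg_{Z,dg}(X))\xrightarrow{\sim}\K_i(\Dsg_{dg}(X))=\Ksg_i(X)$, which settles the isomorphisms.

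The case $i=0$ is the only delicate point, precisely because $\Ksg_0$ is the plain Grothendieck group $\Kt_0$ of the non-idempotent-completed category, whereas property (1) only controls $\K_0$, the Grothendieck group of the idempotent completion. I would record the commutative square
\[
\begin{CD}
\Ksg_0(X\text{ on }Z) @>>> \Ksg_0(X) \\
@VVV @VVV \\
\K_0(\Dsg_{Z,dg}(X)) @>{\sim}>> \K_0(\Dsg_{dg}(X)),
\end{CD}
\]
in which the horizontal maps are induced by the comparison functor, the bottom one being an isomorphism by property (1). Thomason's theorem (Theorem \ref{thm:thomason}), exactly as in Remark \ref{rem:KK}, shows the two vertical maps $\Kt_0(\Dsg_Z(X))\hookrightarrow\Kt_0(\ol{\Dsg_Z(X)})=\K_0(\Dsg_{Z,dg}(X))$ and $\Ksg_0(X)\hookrightarrow\K_0(\Dsg_{dg}(X))$ are injective. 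Chasing the square, the composite $\Ksg_0(X\text{ on }Z)\to\Ksg_0(X)\to\K_0(\Dsg_{dg}(X))$ coincides with $\Ksg_0(X\text{ on }Z)\to\K_0(\Dsg_{Z,dg}(X))\xrightarrow{\sim}\K_0(\Dsg_{dg}(X))$, hence is injective; since the first factor of an injective composite is injective, the top map $\Ksg_0(X\text{ on }Z)\to\Ksg_0(X)$ is injective, as required.

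I expect the only genuine obstacle to be the idempotent-completion bookkeeping at $i=0$: one must resist concluding an isomorphism there, since the essential image of $\Dsg_Z(X)$ in $\Dsg(X)$ is merely dense, the discrepancy being measured by $\Kt_{-1}(X)$ through the sequence \eqref{eq:k_-1}. The one point worth verifying carefully, rather than asserting, is that the square genuinely commutes, i.e. that the Thomason embeddings into the idempotent completions are compatible with the comparison functor; this compatibility is what upgrades the two vertical injections and the bottom isomorphism into injectivity of the top arrow.
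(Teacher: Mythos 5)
Your proposal is correct and follows essentially the same route as the paper, which simply cites Proposition \ref{prop:orl-supp} together with property (1) of Schlichting's $\Kt$-groups; your careful treatment of the $i=0$ case via Thomason's theorem is exactly the bookkeeping the paper leaves implicit (cf.\ Remark \ref{rem:KK}). The commutativity of your square is indeed automatic, since both vertical arrows and the bottom arrow are induced by the same dg-functor together with the functorial embedding of a category into its idempotent completion.
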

\begin{proof}
Follows from Proposition \ref{prop:orl-supp} and property (1) of Schlichting's $\Kt$-theory.
\end{proof}

\begin{remark}\label{rem:K-th-supp-seq}
There are exact sequences for singularity $\Kt$-theory with supports analogous to \eqref{eq:seq-k}, 
\eqref{eq:k_-1};
note that $\Kt_i(\Dperf_{dg,Z}(X)) = \Kt_i(X \text{ on } Z)$ are the Thomason-Trobaugh $\Kt$-theory groups
\cite{tt} while $\Kt_i(\Db_{dg,Z}(X)) \simeq \Gt_i(Z)$ is Quillen's $\Gt$-theory \cite{quillen}.
\end{remark}

We now discuss functoriality properties of $\Ksg_i$.

\begin{lemma}\label{lem:funct}
$\Ksg_i$ are contravariantly functorial for morphisms of finite $\Tor$-dimension, and are covariantly functorial for proper morphisms of finite 
$\Tor$-dimension.
\end{lemma}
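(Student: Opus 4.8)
The plan is to realize $f^*$ and $f_*$ as dg-functors between the enhancements $\Dsg_{dg}$ and then invoke the covariance of Schlichting's $\K_i$ for dg-functors recorded in the properties above. There are two things to verify: that the relevant derived functors preserve the subcategories $\Db$ and $\Dperf$ under the stated hypotheses, and that they are induced by dg-functors compatible with the Drinfeld quotient. The first is standard, and the second is supplied by \cite{schnurer}.

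First I would record the preservation properties on the triangulated level. For a morphism $f: X \to Y$ of finite $\Tor$-dimension, the derived pullback $f^*$ sends $\Db(Y)$ into $\Db(X)$: finite $\Tor$-dimension is exactly the condition guaranteeing that $f^*$ of a bounded complex with coherent cohomology stays bounded; moreover $f^*$ preserves perfect complexes with no hypothesis at all, so $f^*: \Dperf(Y) \to \Dperf(X)$. Dually, for $f$ proper the pushforward $f_*$ sends $\Db(X)$ into $\Db(Y)$ by coherence of the higher direct images together with finite cohomological dimension (properness is projectivity in our quasi-projective setting), and if $f$ additionally has finite $\Tor$-dimension then $f_*$ also preserves perfect complexes \cite{tt}. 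In each case the functor on $\Db$ carries $\Dperf$ into $\Dperf$, so it descends to the Verdier quotients, giving $f^*: \Dsg(Y) \to \Dsg(X)$ and $f_*: \Dsg(X) \to \Dsg(Y)$.

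Next I would lift these to the dg-enhancements. By \cite{schnurer} the functors $f^*$ and $f_*$ are induced by dg-functors $\Db_{dg}(Y) \to \Db_{dg}(X)$ and $\Db_{dg}(X) \to \Db_{dg}(Y)$, which is precisely the input already used to lift Orlov's theorems to the dg-level. Since these dg-functors take the full dg-subcategory of perfect objects into perfect objects, functoriality of the Drinfeld quotient construction yields induced dg-functors $\Dsg_{dg}(Y) \to \Dsg_{dg}(X)$ and $\Dsg_{dg}(X) \to \Dsg_{dg}(Y)$ enhancing the triangulated functors above. Applying $\K_i$, which is covariant for dg-functors, then produces the asserted contravariant maps $\Ksg_i(Y) \to \Ksg_i(X)$ and covariant maps $\Ksg_i(X) \to \Ksg_i(Y)$, with compatibility under composition inherited from the corresponding compatibility of the dg-lifts.

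The hard part here is conceptual rather than computational: $\Kt$-theory of a triangulated category is not functorial, and indeed is not even well-defined (cf.\ the Remark following the properties of $\K_i$), so the entire content of the lemma lies in passing through the dg-enhancements. Thus the crux is to know that the derived functors arise from honest dg-functors that restrict compatibly to the perfect dg-subcategories, which is exactly what legitimizes the Drinfeld quotient descent; once \cite{schnurer} supplies these dg-lifts, the remainder of the argument is formal.
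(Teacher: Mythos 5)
Your proposal is correct and follows essentially the same route as the paper: the paper's proof simply cites \cite{orlov-sing-1} for the functoriality of the triangulated singularity categories (the preservation statements you spell out explicitly) and \cite{schnurer} for the dg-lifts, then invokes functoriality of Schlichting's $\K$-theory for dg-functors. Your version merely unpacks the Orlov citation; the substance is identical.
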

\begin{proof}
This holds because of the triangulated singularity categories
have this functoriality \cite{orlov-sing-1},
and since the pull-back and pushforward functors are induced by dg-enhancements \cite{schnurer}.
\end{proof}

\begin{lemma}\label{lem:open-iso}
Let $U$ be an open subscheme of $X$ containing the singular locus $\Sing(X)$.
Then the inclusion $j: U \to X$ induces an isomorphism
$$j^*: \Ksg_i(X)\simeq \Ksg_i(U)$$
for all $i \in \Z$.
\end{lemma}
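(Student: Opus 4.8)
The plan is to deduce this directly from Orlov's restriction equivalence (Proposition \ref{prop:orl-restr}) together with the invariance properties of Schlichting's $\Kt$-theory, so the proof is essentially formal. Since $U$ contains $\Sing(X)$, Proposition \ref{prop:orl-restr} provides an \emph{equivalence} of triangulated categories $j^*: \Dsg(X) \xrightarrow{\sim} \Dsg(U)$, and, crucially, asserts that this equivalence is induced by a dg-functor $\Dsg_{dg}(X) \to \Dsg_{dg}(U)$ between the chosen dg-enhancements. The whole argument then consists in transporting this single equivalence through the two regimes $i \ne 0$ and $i = 0$ appearing in the definition of $\Ksg_i$.

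First I would treat the case $i \ne 0$. Here $\Ksg_i(X) = \K_i(\Dsg_{dg}(X))$ by definition, so it suffices to show that the dg-functor above induces an isomorphism on Schlichting's $\K_i$. Because the functor is a quasi-equivalence on homotopy categories, the induced triangulated functor $\Dsg(X) \to \Dsg(U)$ is fully faithful with essentially surjective, hence dense, image (every object of $\Dsg(U)$ is isomorphic to, and therefore a direct summand of, an object in the image). This is precisely the hypothesis of property (1) in the list of properties of Schlichting's $\Kt$-groups, which yields the isomorphism $\K_i(\Dsg_{dg}(X)) \xrightarrow{\sim} \K_i(\Dsg_{dg}(U))$, that is $\Ksg_i(X) \simeq \Ksg_i(U)$, for every $i \ne 0$.

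For the remaining case $i = 0$, recall from the definition (and Remark \ref{rem:KK}) that $\Ksg_0(X) = \Kt_0(\Dsg_{dg}(X))$ is the ordinary Grothendieck group of the triangulated category $\Dsg(X)$, taken \emph{without} passing to the idempotent completion. An equivalence of triangulated categories preserves objects up to isomorphism and carries exact triangles to exact triangles, and hence induces an isomorphism of ordinary Grothendieck groups; applying this to $j^*: \Dsg(X) \xrightarrow{\sim} \Dsg(U)$ gives $\Ksg_0(X) \simeq \Ksg_0(U)$ and completes the argument.

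The statement carries essentially no difficulty once Proposition \ref{prop:orl-restr} is in hand. The only point requiring attention — and the only place where the dg-enhancement plays a genuine role rather than the bare triangulated structure — is that Orlov's equivalence must be realised by an actual dg-functor between the specified enhancements, so that property (1) of Schlichting's machinery may be invoked in the range $i \ne 0$. This compatibility is, however, exactly part of the conclusion of Proposition \ref{prop:orl-restr}, so no real obstacle remains.
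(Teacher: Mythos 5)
Your proof is correct and follows the same route as the paper: the paper's entire argument is that $j^*$ is a quasi-equivalence of dg singularity categories by Proposition \ref{prop:orl-restr}, hence induces isomorphisms on all of Schlichting's $\Kt$-groups. You merely spell out the two cases $i\ne 0$ and $i=0$ that the paper leaves implicit, and both are handled correctly.
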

\begin{proof}
By Proposition \ref{prop:orl-restr} $j^*$ is a quasi-equivalence on dg singularity categories, so
it must induce an isomorphism on $\Kt$-theory groups.
\end{proof}

\begin{lemma}\label{lem:homotopy-inv}
Let $X$ be a quasi-projective variety, and let $p: V \to X$ be a vector bundle over $X$.
Then we have an isomorphism
$$p^*: \Ksg_0(X)\simeq \Ksg_0(V).$$
\end{lemma}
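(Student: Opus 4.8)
The plan is to reduce the statement to a comparison of cokernels and then to exploit the fact that, although $\Kt_0$ itself is \emph{not} homotopy invariant, the subgroup of $\Gt_0$ cut out by perfect complexes is. Since $p$ is flat it has finite $\Tor$-dimension, so by Lemma \ref{lem:funct} the pull-back $p^*$ is defined on every group appearing in the sequence \eqref{eq:seq-k}, and it is compatible with the canonical maps $\can$. Using the right-hand end of \eqref{eq:seq-k} I identify $\Ksg_0(X) = \Coker(\can \colon \Kt_0(X) \to \Gt_0(X))$, and likewise $\Ksg_0(V) = \Coker(\can \colon \Kt_0(V) \to \Gt_0(V))$. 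Thus $p^*$ induces a morphism between these cokernels, and the whole lemma becomes the assertion that this morphism is an isomorphism.

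Next I bring in homotopy invariance of $\Gt$-theory: by Quillen's theorem \cite{quillen} the flat pull-back $p^* \colon \Gt_0(X) \to \Gt_0(V)$ along the vector bundle $p$ is an isomorphism. Given this, the induced map of cokernels is an isomorphism precisely when the two images coincide, i.e. when $p^*(\im \can_X) = \im \can_V$ as subgroups of $\Gt_0(V)$; here $\im \can_X$ (resp. $\im \can_V$) is the subgroup generated by the classes of perfect complexes, equivalently vector bundles, on $X$ (resp. on $V$). One inclusion is immediate: perfect complexes pull back to perfect complexes, so $p^*(\im \can_X) \subseteq \im \can_V$.

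The reverse inclusion is the crux, and the step I expect to be the main obstacle, since this is exactly where the failure of homotopy invariance for $\Kt_0$ could a priori cause trouble; it is handled by the zero section rather than by any five-lemma argument. Let $s \colon X \hookrightarrow V$ be the zero section. It is a regular closed immersion, hence of finite $\Tor$-dimension, and $p \circ s = \id$. Consequently the derived pull-back $s^*$ satisfies $s^* \circ p^* = \id$ on $\Gt_0(X)$, so $s^* = (p^*)^{-1}$ is the inverse of the $\Gt_0$-isomorphism. Because $s$ is a regular immersion, $s^*$ carries perfect complexes to perfect complexes, whence $s^*(\im \can_V) \subseteq \im \can_X$; applying $p^* = (s^*)^{-1}$ gives $\im \can_V \subseteq p^*(\im \can_X)$. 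Combined with the inclusion of the previous paragraph this yields $\im \can_V = p^*(\im \can_X)$, and therefore the induced map $p^* \colon \Ksg_0(X) \to \Ksg_0(V)$ is an isomorphism.
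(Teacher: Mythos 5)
Your proof is correct and uses essentially the same ingredients as the paper's: homotopy invariance of $\Gt_0$ for the flat pull-back along $p$, and the zero section (a regular immersion, hence of finite $\Tor$-dimension) as a splitting. The only difference is packaging — the paper deduces injectivity from the retraction $i^*p^*=\id$ directly on $\Ksg_0$ and surjectivity from the surjection $\Gt_0(V)\twoheadrightarrow\Ksg_0(V)$, whereas you establish both at once by showing $p^*(\im\can_X)=\im\can_V$ inside $\Gt_0(V)$; this is a faithful reformulation of the same argument.
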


\begin{proof} Let $i: X \to V$ the zero section.
Since $p$ is flat and $i$ is a regular embedding,
both morphisms $p$ and $i$ are of finite $\Tor$ dimension,
so by Lemma \ref{lem:funct} we have pull-back homomorphisms
$p^*:\Ksg_i(X)\to \Ksg_i(V)$ and $i^*:\Ksg_i(V)\to \Ksg_i(X)$, 
and $i^*$ is left-inverse to $p^*$, in particular $p^*$ is injective.

On the other hand, from the diagram 
\begin{equation*}
\xymatrix{
    \Gt_0(X)\ar@{->}[d]_{p^*}^{\simeq}\ar@{->>}[r] & \Ksg_0(X)\ar@{->}[d]^{p^*}\\
 \Gt_0(V)\ar@{->>}[r] & \Ksg_0(V)}
\end{equation*}
we see immediately that $p^*:\Ksg_0(X)\to \Ksg_0(V)$ is surjective as well.
\end{proof}

\begin{remark}
The functors $\Ksg_i$ are not homotopy invariant for $i \ne 0$ in general. Consider for example
the case of $\Ksg_1$; if we have $\Ksg_1(X \times \A^1) \simeq \Ksg_1(X)$, then
using the five-lemma applied to the five bottom terms of the sequence \eqref{eq:seq-k}, 
we would deduce that $\Kt_0(X \times \A^1) \simeq \Kt_0(X)$ which typically does not hold for singular varieties.
\end{remark}

We will now present a method to compute $\Ksg_j(X)$ for a special
class of schemes which we call $\A^1$-contractible.
This approach generalizes the so-called Swan-Weibel homotopy trick, which is used to show that normal graded domains
have vanishing Picard group \cite[Lemma 5.1]{Murthy}.

\begin{definition}
We say that $X$ is \emph{$\A^1$-contractible}, if
there exists a morphism $H: X \times \A^1 \to X$ 
such that $H|_{X \times 1}$ is the identity map and 
$H|_{X \times 0}$ is a constant rational point $x_0 \in X$.
We also say that $H$ is a \emph{contraction} of $X$.
\end{definition}

\begin{lemma}\label{lem:contractible}
The following affine schemes are $\A^1$-contractible:
\begin{enumerate}
    \item $\A^n/G$, where $G$ acts linearly on $\A^n$
    \item $V(f) \subset \A^n$, where $f \in k[x_1, \dots, x_n]$
    is a weighted homogeneous polynomial
\end{enumerate}
\end{lemma}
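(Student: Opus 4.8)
The plan is to produce the contraction $H$ in both cases as a single explicit formula — scaling towards the origin — and to reduce the verification to a grading statement on the coordinate ring. First I would record the following general principle, which covers both items simultaneously. Suppose $R = \bigoplus_{d \ge 0} R_d$ is a finitely generated graded $k$-algebra with $R_0 = k$, and set $X = \Spec R$. Define a $k$-algebra homomorphism $\phi \colon R \to R[t]$ by $\phi(f) = \sum_d t^d f_d$ for $f = \sum_d f_d$ the decomposition into homogeneous components; that $\phi$ respects multiplication is immediate on homogeneous elements. The induced morphism $H \colon X \times \A^1 \to X$ then satisfies $H|_{X \times 1} = \id_X$ (specialize $t \mapsto 1$, recovering $f \mapsto \sum_d f_d = f$) and $H|_{X \times 0}$ equal to the composite $R \to R_0 = k \hookrightarrow R$ (specialize $t \mapsto 0$), which is the constant map to the $k$-rational vertex $x_0$ cut out by the irrelevant ideal $R_+$. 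Thus any such $X$ is $\A^1$-contractible. This is precisely the Swan--Weibel homotopy trick referred to above.

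It then remains to exhibit, in each case, a grading with $R_0 = k$. For (1), I would take $R = k[x_1,\dots,x_n]^G$ with the standard grading by total degree; since $G$ acts linearly it preserves this grading, so $R$ is a graded subalgebra and $R_0 = k$ because the only invariant constants are scalars. Geometrically the same contraction is the descent to $\A^n/G$ of the ordinary scaling $\mu \colon \A^n \times \A^1 \to \A^n$, $(v,t) \mapsto tv$: linearity of the action gives $g \cdot (tv) = t(g\cdot v)$, so $\mu$ is $G$-equivariant and hence factors through the quotient, with $t = 0$ sending everything to the image $x_0$ of the origin.

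For (2), I would equip $k[x_1,\dots,x_n]$ with the weighted grading $\deg x_i = a_i$, where $a_1,\dots,a_n > 0$ and $d > 0$ are the weights and degree making $f$ weighted homogeneous, i.e. $f(t^{a_1}x_1,\dots,t^{a_n}x_n) = t^d f(x)$. Then $(f)$ is a homogeneous ideal, so $R = k[x_1,\dots,x_n]/(f)$ inherits the weighted grading, and $R_0 = k$ because all weights are strictly positive. Applying the principle gives the contraction; concretely it is the restriction of the weighted scaling $(x,t) \mapsto (t^{a_1}x_1,\dots,t^{a_n}x_n)$, which preserves $V(f)$ exactly by weighted homogeneity and sends the fiber over $t = 0$ to the origin, a point of $V(f)$ since $d > 0$ forces $f(0) = 0$.

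I do not expect a serious obstacle here: the formulas are explicit and the checks are routine. The two points that genuinely require the hypotheses — and hence where care is needed — are that $\phi$ be well defined on the relevant ring (which is exactly $G$-equivariance in (1) and weighted homogeneity of $f$ in (2), guaranteeing the grading descends to $R$), and that $R_0 = k$, so that the $t = 0$ specialization is a single rational point $x_0$ rather than a larger subscheme; the latter uses that the $G$-action is linear in (1) and that all weights $a_i$ are positive in (2).
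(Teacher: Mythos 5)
Your proof is correct and follows essentially the same route as the paper: the paper handles (1) by descending the $G$-equivariant scaling $(v,t)\mapsto tv$ to the quotient and (2) by the graded-algebra homomorphism $x_i\mapsto t^{w_i}x_i$, which are exactly your two specializations. Your only addition is to package both cases as instances of the Swan--Weibel trick for a positively graded algebra with $R_0=k$, which the paper itself cites as the motivation just before the definition of $\A^1$-contractibility.
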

\begin{proof}
(1) $\A^n$ admits a $G$-equivariant contraction $H_{\A^n}: \A^n \times \A^1 \to \A^n$, $H(v, t) = tv$
($G$ acts trivially on the $\A^1$ factor), which induces a contraction $H: \A^n/G \times \A^1 \to \A^n/G$. 

(2) By assumption the algebra $k[V(f)] = k[x_1, \dots, x_n]/(f)$ is positively graded. Let $w_i > 0$ be the weight of $x_i$.
Then the $k$-algebra morphism
\[
k[X] \to k[X,t], \quad x_i \mapsto t^{w_i} \cdot x_i 
\]
is well-defined and induces a contraction for $V(f)$. 
\end{proof}

\begin{proposition}\label{prop:A1-Ksg}
Let $X$ be $\A^1$-contractible.

(1) For every $j \ge 0$ the canonical map
$\can: \Kt_j(X) \to \Gt_j(X)$ 
factors as a composition $\Kt_j(X) \overset{x_0^*}{\to} \Kt_j(\Spec(k)) \overset{p^*}\to \Gt_j(X)$,
where $p: X \to \Spec(k)$ is the structure morphism.

(2) There is a natural isomorphism
\[
\Ksg_0(X) \simeq \Gt_0(X) / (\Z \cdot [\OO_X]).
\]

(3) If $X$ has a smooth rational point $x_1 \in X$, then
\[
\Gt_0(X) = \Z \cdot [\OO_X] \oplus \Ksg_0(X) 
\]
and for every $j \ge 1$ there is a short exact sequence
\[
0 \to \Coker(\Kt_j(k) \overset{p^*}{\to} \Gt_j(X)) \to \Ksg_j(X) \to 
\Ker(\Kt_{j-1}(X) \overset{x_0^*}{\to} \Kt_{j-1}(k)) \to 0.
\]
\end{proposition}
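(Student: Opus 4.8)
The plan is to treat part (1) as the main point---it is a $\Kt$-theoretic version of the Swan--Weibel homotopy trick---and to deduce (2) and (3) formally from the exact sequence \eqref{eq:seq-k}. For part (1), write $i_0,i_1\colon X\to X\times\A^1$ for the inclusions at $t=0$ and $t=1$ and $\pi\colon X\times\A^1\to X$ for the projection, so that $\pi i_0=\pi i_1=\id_X$, while the contraction satisfies $H i_1=\id_X$ and $H i_0=x_0\circ p$. Given $[E]\in\Kt_j(X)$ with $E$ perfect, I would form $H^*E$, which is again perfect because perfect complexes pull back under an arbitrary morphism; this produces $\can[H^*E]\in\Gt_j(X\times\A^1)$. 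Since $i_0$ and $i_1$ are regular embeddings, hence of finite $\Tor$-dimension, $\can$ commutes with $i_0^*$ and $i_1^*$ (Lemma \ref{lem:funct}), giving $i_1^*\can[H^*E]=\can[(H i_1)^*E]=\can[E]$ and $i_0^*\can[H^*E]=\can[(x_0 p)^*E]=p^*x_0^*[E]$. The essential input is the classical homotopy invariance of $\Gt$-theory \cite{quillen}: $\pi^*\colon\Gt_j(X)\to\Gt_j(X\times\A^1)$ is an isomorphism, and as $i_0^*$ and $i_1^*$ are both left inverse to $\pi^*$ they coincide. Equating the two computations yields $\can[E]=p^*x_0^*[E]$, the desired factorization.

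For part (2), the tail of \eqref{eq:seq-k} identifies $\Ksg_0(X)$ with $\Coker(\can\colon\Kt_0(X)\to\Gt_0(X))$. By part (1) this image is $p^*(\im x_0^*)$, and since $x_0^*[\OO_X]=[\OO_{\Spec k}]$ generates $\Kt_0(k)=\Z$ we get $\im(\can)=\Z\cdot p^*[\OO_{\Spec k}]=\Z\cdot[\OO_X]$, so $\Ksg_0(X)\simeq\Gt_0(X)/(\Z\cdot[\OO_X])$.

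Part (3) is where the smooth rational point enters. Because $\OO_{X,x_1}$ is regular, $x_1\colon\Spec k\to X$ is of finite $\Tor$-dimension, so $x_1^*\colon\Gt_i(X)\to\Gt_i(k)$ is defined; the relation $p x_1=\id$ then gives $x_1^*p^*=\id$, so $p^*$ is split injective on $\Gt$-theory, while $p x_0=\id$ gives $x_0^*p^*=\id$, so $x_0^*$ is split surjective on $\Kt$-theory. Since $x_1^*[\OO_X]=1$, the class $[\OO_X]$ has infinite order and $x_1^*$ splits the extension $0\to\Z\cdot[\OO_X]\to\Gt_0(X)\to\Ksg_0(X)\to0$ of part (2), giving $\Gt_0(X)=\Z\cdot[\OO_X]\oplus\Ksg_0(X)$. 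For the remaining sequence I would extract from \eqref{eq:seq-k} the short exact sequence $0\to\Coker(\can_j)\to\Ksg_j(X)\to\Ker(\can_{j-1})\to0$; by part (1) and split surjectivity of $x_0^*$ one has $\im(\can_j)=p^*(\im x_0^*)=\im(p^*)$, so $\Coker(\can_j)=\Coker(p^*\colon\Kt_j(k)\to\Gt_j(X))$, and by part (1) together with injectivity of $p^*$ one has $\Ker(\can_{j-1})=\Ker(x_0^*\colon\Kt_{j-1}(X)\to\Kt_{j-1}(k))$, which is the asserted sequence.

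The only step requiring genuine care is the bookkeeping in part (1): one must check that each pullback that occurs ($H^*$ on $\Kt$, and $i_0^*,i_1^*$ on both $\Kt$ and $\Gt$) is defined, and that $\can$ really does commute with the finite-$\Tor$-dimension pullbacks at $t=0,1$. Once homotopy invariance collapses $i_0^*$ and $i_1^*$, everything else is purely formal.
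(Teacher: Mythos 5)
Your proposal is correct and follows essentially the same route as the paper: the same homotopy-trick computation $\can = G(i_1)\circ\can_{X\times\A^1}\circ K(H) = G(i_0)\circ\can_{X\times\A^1}\circ K(H) = p^*\circ x_0^*$ using homotopy invariance of $\Gt$-theory and commutation of $\can$ with finite-$\Tor$-dimension pullbacks, followed by the same formal identifications $\Coker(\can)=\Coker(p^*)$ and (given the smooth rational point splitting $p^*$) $\Ker(\can)=\Ker(x_0^*)$ fed into the sequence \eqref{eq:seq-k}.
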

\begin{proof}
(1) The proof relies on the fact that the canonical map $\can$ commutes with pull-backs of finite 
$\Tor$ dimension as well as on homotopy invariance of $\Gt$-theory.
Let us write $i_0$, $i_1$ for the two embeddings of $X$ into $X \times \A^1$
corresponding to $0, 1 \in \A^1$. These embeddings define Cartier divisors, in particular
are regular, hence of finite $\Tor$-dimension.

In the computation below we use the notation $K(f)$ and $G(f)$ for the pull-backs 
on $\Kt$ and $\Gt$-theory respectively, and $\can_{Z}$ 
for
the canonical Poincar\'e duality map $\Kt_0(Z) \to \Gt_0(Z)$
on any $Z$.
We compute:
\[\bal
\can_X &= \can_X \circ K(i_1) \circ K(H) \\
&= G(i_1) \circ \can_{X \times \A^1} \circ K(H) \\
&= G(i_0) \circ \can_{X \times \A^1} \circ K(H) \\
&= \can_X \circ K(i_0) \circ K(H) \\
&= \can_X \circ K(p) \circ K(x_0) \\
&= G(p) \circ \can_{\Spec(k)} \circ K(x_0) \\
\eal\]
which is what we had to establish.

Let us now compute $\Ker(\can)$, $\Coker(\can)$ for $\can: \Kt_j(X) \to \Gt_j(X)$ using (1).
The map $x_0^*$ is always surjective (since $x_0^* \circ p^* = \id_{\Kt_j(k)}$), 
hence $\Coker(\can) = \Coker(p^*)$, and applying this to $j = 0$ using the $\Kt$-theory short exact sequence
\eqref{eq:seq-k}
we get (2).

If in addition $X$ admits a smooth rational point $x_1$, then the map $p^*$ is injective 
(since $x_1^* \circ p^* = \id_{\Kt_j(k)}$ for
the pull-back $x_1^*: \Gt_j(X) \to \Kt_j(k)$ for the regular embedding of $x_1$ into $X$), 
hence (1) implies $\Ker(\can) = \Ker(x_0^*)$.

Once we have identified the kernel and cokernel of $\can$, 
(3) follows from the $\Kt$-theory long exact sequence \eqref{eq:seq-k}.
\end{proof}

In the two examples below we consider $\A^1$-contractible schemes with no smooth rational points.

\begin{example}\label{ex:k-eps}
$X = \Spec(k[\epsilon] / \epsilon^n)$ is $\A^1$-contractible 
by Lemma \ref{lem:contractible} (2).
In this case the canonical map $\Kt_0(X) \to \Gt_0(X)$ is $\Z \overset{\cdot n}{\to} \Z$
and $\Ksg_0(X) = \Z_n$.
\end{example}

\begin{example}\label{ex:x2-y2}
Let $X$ be an affine curve $x^2 + y^2 = 0$ over $k = \R$, the real numbers. 

To compute $\Gt_0(X)$ we consider a compactification
$X \subset \ol{X}$, where $\ol{X}$ is given
by equation $X^2 + Y^2 = 0$ in the real projective
plane $\P^2_{\R}$ with coordinates $X, Y, Z$.
The complement $\ol{X} \setminus X$ is the single closed
(non-rational) point at infinity $\infty \in \ol{X}$;
as a subscheme $\infty$ is isomorphic to $\Spec(\C)$.

It is easy to see that there is an isomorphism
\[
\Gt_0(\ol{X}) \simeq \Z \oplus \Z, \quad [\FF] \mapsto
(\rk(\FF), \deg(\FF))
\]
(surjectivity is obvious, while injectivity
boils down to the fact that every class of a skyscraper sheaf of a closed point $x \in \ol{X}$
is a multiple of the class of the skyscraper sheaf of the rational point $[0:0:1]$,
and this can be checked using the fact that $\CH_0(\ol{X}) = \Z$).

We write the $\Gt$-theory localization sequence for $X \subset \ol{X}$:
\[
\Gt_0(\Spec(\C)) \to \Gt_0(\ol{X}) \to \Gt_0(X) \to 0.
\]
Under the isomorphism $\Gt_0(\ol{X}) = \Z \oplus \Z$
the class of structure sheaf of the point at infinity corresponds to
$(0,2)$; we deduce that $\Gt_0(X) = \Z \oplus \Z_2$
given by the rank map and degree modulo two.

The curve $X$ is $\A^1$-contractible by Lemma \ref{lem:contractible} (2) so that by Proposition
\ref{prop:A1-Ksg} (2) we obtain
\[
\Ksg_0(X) = \Gt_0(X) / \Z \cdot [\OO_X] = \Z_2,
\]
generated by the class of the structure sheaf
of the rational point $(0,0) \in X$.
\end{example}

\subsection{Topological filtration on $\Ksg_0$}

We introduce and study the topological filtration on $\Ksg_0(X)$, that is the filtration given by the codimension of support.
Note that unlike the case of objects from $\Db(X)$, the support of an object in $\Dsg(X)$ is NOT well-defined, as the following example demonstrates.

\begin{example}\label{ex:no-support}
Let $X=\lbrace xy=0\rbrace\subset\A^2$ be the union of two $\A^1$-lines over $k$. Denote the two affine lines by $L_1 = \A^1 \times 0 \subset X$ and $L_2 =0 \times \A^1 \subset X$. 
The structure sheaves of $L_1$ and $L_2$ correspond to quotient rings $k[x] = k[x,y]/(y)$ and $k[y] = k[x,y]/(x)$ respectively.
We have an exact sequence of $k[x,y]/(xy)$-modules
\[
0 \to k[x] \overset{x}{\to} k[x,y]/(x,y) \to k[y] \to 0
\]
which translates into a distinguished triangle in $\Db(X)$
\[
\OO_{L_1} \to \OO_X \to \OO_{L_2} \to  \OO_{L_1}[1]
\]
and yields an isomorphism $\OO_{L_2} \simeq \OO_{L_1}[1]$ of objects $\Dsg(X)$ (the shift [1] is two-periodic in this example); sheaf-theoretic supports of these two objects are different.
\end{example}

We can speak about codimension of support of an object of $\Dsg(X)$ without defining the support itself.
Let $X$ be a quasi-projective scheme with all irreducible
components of the same dimension $n$.

Recall that $\Kt_0(X)$, $\Gt_0(X)$ admit the so-called topological filtration (also called
the coniveau or the codimension filtration), which goes back to
Grothendieck and is defined as follows. 
The class $\alpha \in \Gt_0(X)$ (resp. $\Kt_0(X)$) belongs to $F^i \Gt_0(X)$ (resp. $F^i \Kt_0(X)$)
if $\alpha$ can be represented by a 
bounded complex of coherent sheaves (resp. locally free sheaves) whose support has codimension
at least $i$.
It is clear from the definitions that the canonical map $\can: \Kt_0(X) \to \Gt_0(X)$
maps $F^i \Kt_0(X)$ to $F^i \Gt_0(X)$.

We consider the natural quotient homomorphism $Q:\Gt_0(X)\twoheadrightarrow \Ksg_0(X)$
and define
\[
F^i \Ksg_0(X) = Q(F^i \Gt_0(X)).
\]
This gives a filtration
$$0=F^{n+1}\Ksg_0(X)\subseteq F^{n}\Ksg_0(X)\subseteq \ldots\subseteq F^1 \Ksg_0(X)\subseteq F^0\Ksg_0(X)=\Ksg_0(X).$$

Explicitly, we say that a class $\alpha \in \Ksg_0(X)$ has codimension at least $i$ or that $\alpha \in F^i \Ksg_0(X)$, if
$\alpha$ can be represented by a complex of coherent sheaves $\EE$ on $X$ whose cohomology sheaves are supported in codimension $i$.

It follows from definitions that we have canonical isomorphisms
\begin{equation}\label{eq:F-formula}
F^i \Ksg_0(X) \simeq \frac{F^i \Gt_0(X)}{F^i \Gt_0(X) \cap \can(\Kt_0(X))}
\end{equation}

We let $\Gr^i \Ksg_0(X) = F^{i} \Ksg_0(X) / F^{i+1} \Ksg_0(X)$, and similarly for $\Kt_0(X)$, $\Gt_0(X)$. 
We have a natural surjection
\begin{equation}\label{eq:GrK0-sing}
\Gr^i \Gt_0(X) \to \Gr^i \Ksg_0(X) 
\end{equation}
and canonical isomorphisms
\begin{equation}\label{eq:gr-formula}
\Gr^i \Ksg_0(X) \simeq \frac{F^i \Gt_0(X)}{F^{i+1} \Gt_0(X) + (F^i \Gt_0(X) \cap \can(\Kt_0(X)))}.
\end{equation}

\begin{proposition}\label{prop:K_0sg-filtr}
Let $X/k$ be a connected reduced quasi-projective scheme
with all irreducible components of the same
dimension $n$.
\begin{itemize}
 \item[(1)] Let $N$ be the number of irreducible components of $X$. Then $\Gr^0 \Ksg_0(X) = \Z^{N-1}$.
 In particular $\Gr^0 \Ksg_0(X) = 0$ if and only if $X$ is irreducible.
 \item[(2)] If $X$ is irreducible and normal then $\Gr^1 \Ksg_0(X) \simeq \Cl(X) / \Pic(X)$. 
 In particular in this case $\Gr^1 \Ksg_0(X) = 0$ if and only if $X$ is factorial.
 \item[(3)] For any $i \ge 0$ there is a natural surjective homomorphism $\CH_{n-i}(X) \to \Gr^i \Ksg_0(X)$.
 \item[(4)] If $k$ is algebraically closed, then $\Gr^n \Ksg_0(X) = 0$.
\end{itemize}

\end{proposition}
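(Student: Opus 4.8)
\emph{Overall strategy.} The plan is to read off every graded piece directly from \eqref{eq:gr-formula}, which I rewrite as $\Gr^i \Ksg_0(X) \simeq \Gr^i \Gt_0(X)/I^i$, where $I^i$ is the image in $\Gr^i \Gt_0(X)$ of $F^i \Gt_0(X) \cap \can(\Kt_0(X))$. The basic input is the classical cycle map $\CH_{n-i}(X) \twoheadrightarrow \Gr^i \Gt_0(X)$, $[Z] \mapsto [\OO_Z]$, which is surjective for every $i$ and which, under the present hypotheses, is an isomorphism for $i=0$ and $i=1$. Granting this, part (3) is immediate: I would simply compose the cycle map with the surjection \eqref{eq:GrK0-sing}, $\Gr^i \Gt_0(X) \twoheadrightarrow \Gr^i \Ksg_0(X)$.

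\emph{Parts (1) and (2).} For $i=0$ the cycle map gives $\Gr^0 \Gt_0(X) \simeq \CH_n(X) = \Z^N$, recorded by the generic rank on each of the $N$ irreducible components. Since $X$ is connected, every vector bundle has constant rank, so the image $I^0$ of $\can(\Kt_0(X))$ is exactly the diagonal $\Z\cdot(1,\dots,1)$; hence $\Gr^0 \Ksg_0(X) = \Z^N/\Z\cdot(1,\dots,1) = \Z^{N-1}$. For $i=1$ with $X$ irreducible and normal, the cycle map gives $\Gr^1 \Gt_0(X) \simeq \CH_{n-1}(X) = \Cl(X)$. Here $F^1 \Gt_0(X) \cap \can(\Kt_0(X))$ is the image under $\can$ of the rank-zero part $F^1 \Kt_0(X)$, generated by classes $[E]-\rk(E)[\OO_X]$ of vector bundles; its image in $\Gr^1 \Gt_0(X) \simeq \Cl(X)$ is computed by the first Chern class $c_1(E)=c_1(\det E)$, and as $\det E$ ranges over all line bundles this image is precisely $\Pic(X) \subseteq \Cl(X)$. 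Therefore $\Gr^1 \Ksg_0(X) \simeq \Cl(X)/\Pic(X)$.

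\emph{Part (4).} Since $F^{n+1} \Gt_0(X)=0$, the formula \eqref{eq:gr-formula} collapses to $\Gr^n \Ksg_0(X) \simeq F^n \Gt_0(X)/(F^n \Gt_0(X)\cap \can(\Kt_0(X)))$, so it suffices to show $F^n \Gt_0(X) \subseteq \can(\Kt_0(X))$; equivalently, by the surjectivity in part (3), that each generator $[\OO_x]$ with $x \in X$ a closed point maps to $0$ in $\Gr^n \Ksg_0(X)$. If $x$ is a regular point, then $\OO_{X,x}$ is regular and the skyscraper $\OO_x$ admits a finite locally free resolution, so $[\OO_x] \in \can(\Kt_0(X))$ outright. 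For an arbitrary closed point I would invoke the moving lemma for zero-cycles over the algebraically closed field $k$: the class $[x] \in \CH_0(X)$ is rationally equivalent to a zero-cycle $\sum_i n_i[y_i]$ supported in the dense regular locus $X \setminus \Sing(X)$. As the cycle map factors through rational equivalence, this gives $[\OO_x] = \sum_i n_i[\OO_{y_i}]$ in $\Gr^n \Gt_0(X)$, and each $[\OO_{y_i}]$ lies in $\can(\Kt_0(X))$ by the regular-point case; hence $[\OO_x] \in \can(\Kt_0(X))$ and $\Gr^n \Ksg_0(X)=0$.

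\emph{Main obstacle.} The genuinely geometric step, and the one I expect to be the crux, is the moving lemma in part (4): through a prescribed closed point $x$ one must produce a curve not contained in $\Sing(X)$ and then slide $x$ along its normalization into the regular locus. This is exactly where connectedness, reducedness, and the algebraic closedness of $k$ enter (guaranteeing that $X \setminus \Sing(X)$ is dense and that closed points are rational). The remaining subtlety is the justification that the cycle map $\CH_{n-i}(X) \to \Gr^i \Gt_0(X)$ is an \emph{integral} isomorphism for $i=0,1$ — it is always a rational isomorphism by Grothendieck--Riemann--Roch, and in codimension $\le 1$ on a normal variety the integral statement is standard; this is the only place where normality is used in part (2).
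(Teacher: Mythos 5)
Your parts (1)--(3) are essentially the paper's proof: the paper likewise reads $\Gr^0$ off the generic-rank isomorphism $\Gr^0\Gt_0(X)\simeq\Z^N$ with $\can(\Kt_0(X))$ landing diagonally, obtains $\Gr^1\Gt_0(X)\simeq\Cl(X)$ for normal irreducible $X$ (by restricting to the smooth locus, which is the usual justification for your ``integral isomorphism in codimension $\le 1$'') and identifies the image of $F^1\Kt_0(X)$ with $\Pic(X)$ via $\Gr^1\Kt_0(X)=\Pic(X)$ (Fulton--Lang), which is your determinant computation; and part (3) is verbatim the composition of the Gillet/SGA~6 surjection $\CH_{n-i}(X)\twoheadrightarrow\Gr^i\Gt_0(X)$ with \eqref{eq:GrK0-sing}. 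The genuine divergence is in part (4). You correctly reduce to showing $[\OO_x]\in\can(\Kt_0(X))$ for every closed point and correctly identify the crux as moving $[x]\in\CH_0(X)$ into the regular locus; but note that this is \emph{not} the classical Chow moving lemma (which requires smoothness), so the statement you invoke is exactly what must be proved. The paper sidesteps this by taking a de Jong alteration $\pi:Y\to X$ with $Y$ smooth, lifting $x$ to a closed point $y$ (here algebraic closedness is used), moving $y$ off $\pi^{-1}(\Sing(X))$ on the smooth $Y$, and pushing forward; your route of choosing a curve $C\ni x$ with $C\cap\Sing(X)$ finite and moving along its normalization also works, but to be complete it needs the Bertini-type existence of such a curve and the Riemann--Roch/base-point-freeness argument producing a rational function whose divisor is $[\tilde x]$ minus a cycle avoiding the preimage of $\Sing(X)$; you should also treat reducible $X$ component by component, as the paper does, taking $Z=X_i\cap\Sing(X)$ so that the target points are genuinely smooth points of $X$ and not merely of $X_i$. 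With those details supplied your argument is a valid, and arguably more elementary, alternative to the alteration step; the paper's version buys a shorter write-up at the cost of invoking de Jong.
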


\begin{proof}
(1) Using $i = 0$ case of \eqref{eq:gr-formula} we obtain
\[
\Gr^0 \Ksg_0(X) \simeq \frac{\Gt_0(X)}{F^1 \Gt_0(X) + \can(\Kt_0(X))}.
\]

We have $\Gr^0 \Gt_0(X) = \Z^N$, where the isomorphism is given by generic rank at the irreducible components.
Since $X$ is connected,
a locally-free sheaf has the same rank at each point, 
and the image of the composition $\Kt_0(X) \overset{\can}{\to} \Gt_0(X) \to \Gr^0 \Gt_0(X)$ consists of $\Z$ embedded into $\Z^N$ diagonally, since $X$ is reduced.
We conclude that $\Gr^0 \Ksg_0(X) = \Z^{N-1}$.

(2) Since $X$ is irreducible, we have canonical splittings $\Kt_0(X) = \Z \oplus F^1 \Kt_0(X)$ and $\Gt_0(X) = \Z \oplus F^1 \Gt_0(X)$, which are respected by $\can$ so 
that from \eqref{eq:gr-formula} we deduce
\[
\Gr^1 \Ksg_0(X) \simeq \frac{F^1 \Gt_0(X)}{F^2 \Gt_0(X) + (F^1 \Gt_0(X) \cap \can(\Kt_0(X)))} 
\simeq \frac{F^1 \Gt_0(X)}{F^2 \Gt_0(X) + \can(F^1 \Kt_0(X))}.
\]

By \cite[Remark 1 on Page 126]{fulton-lang}, we have a natural isomorphism $\Gr^1 \Kt_0(X) = \Pic(X)$.
Since $X$ is normal so that its singular locus has codimension at least two we also get the following isomorphisms
\[
\Gr^1 \Gt_0(X) = \Gr^1 \Gt_0(X \setminus \Sing(X)) = \Pic(X \setminus \Sing(X)) = \Cl(X).                                                                                                                   
\]

It follows that the image of $F^1 \Kt_0(X)$ in $\Gr^1 \Gt_0(X) = \Cl(X)$ is equal 
to $\Pic(X)$, and we get $\Gr^1 \Ksg_0(X) = \Cl(X) / \Pic(X)$.

Finally, $X$ is factorial if and only if $\Pic(X) = \Cl(X)$ which is equivalent to $\Gr^1 \Ksg_0(X) = 0$.

(3) There is a surjection $\CH_{n-i}(X) \to \Gr^i\Gt_0(X)$, 
sending the class of an $(n-i)$-dimensional subvariety to the structure sheaf of this variety 
(see SGA X \cite{sga-6} or \cite[Lemma 3.8, Theorem 3.9]{gillet}),
and composing it with the surjection $\Gr^i \Gt_0(X) \to \Gr^i \Ksg_0(X)$ gives the desired homomorphism.

(4) This is a simple Moving Lemma argument.
Assume first that $X$ is irreducible. 
We fix a closed subvariety $Z \subsetneq X$ containing the singular locus.
By De Jong's work (see \cite{de-jong}, Theorem 4.1)
there is a proper surjective and generically finite morphism $\pi:Y\to X$ where $Y$ is a smooth irreducible and quasi-projective variety.
Let $E = \pi^{-1}(Z) \subset Y$.

Let us show that for every closed point $x \in X$ there is a point $x' \in X \setminus Z$ such that $[\OO_x] = [\OO_{x'}] \in \Gt_0(X)$.
Indeed, since we assume that $k$ is algebraically closed, there is a closed point $y \in Y$ such that $\pi(y) = x$, and using a simple argument (e.g. reducing to the case when $Y$ is a curve,
or using the Moving Lemma \cite{moving-lemma} for Chow groups),
we can find $y' \in Y \setminus E$ such that $[\OO_y] = [\OO_{y'}] \in \Gt_0(Y)$. Pushing forward this equality to $X$ we get $[\OO_x] = [\OO_{x'}] \in \Gt_0(X)$, where $x' = \pi(y')$.
Since the structure sheaves of non-singular points are perfect complexes, we get $[\OO_x] = [\OO_{x'}] = 0 \in \Ksg_0(X)$.
Finally, every class of a zero-dimensional complex $[\FF] \in \Gt_0(X)$ is a linear
combination of structure sheaves of closed points, and this shows that $F^n \Ksg_0(X) = 0$ if $X$ is irreducible.

If $X$ is not irreducible, the result is obtained by applying the argument above to each of the irreducible components of $X_i \subset X$ with respect to $Z = X_i \cap \Sing(X)$.
\end{proof}


\begin{corollary}\label{cor:structure-sheaves-vanish} If $k$ is algebraically closed, $x\in X$ is a 
closed point and $\OO_x$ its structure sheaf, 
then the image of $[\OO_x]\in \Gt_0(X)$ in $\Ksg_0(X)$ is zero. 
\end{corollary}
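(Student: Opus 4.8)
The plan is to rewrite the assertion in terms of the exact sequence \eqref{eq:seq-k}. Since $\Ksg_0(X) = \Coker(\can\colon \Kt_0(X) \to \Gt_0(X))$, the image of $[\OO_x]$ in $\Ksg_0(X)$ vanishes exactly when $[\OO_x] \in \Gt_0(X)$ lies in $\can(\Kt_0(X))$, that is, when $[\OO_x]$ is represented by a perfect complex. If $x$ is a nonsingular point of $X$ this is immediate: near $x$ the skyscraper $\OO_x$ has a finite free (Koszul) resolution and is supported at a point, so it is perfect and already represents a class in the image of $\can$. The whole content is therefore the case $x \in \Sing(X)$, and I would extract the argument from the proof of Proposition \ref{prop:K_0sg-filtr}(4). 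Let me also record that some reducedness is genuinely needed: for $X = \Spec(k[\epsilon]/\epsilon^n)$ the class $[\OO_x]$ generates $\Ksg_0(X) = \Z_n \neq 0$ by Example \ref{ex:k-eps}, so I would prove the statement for reduced $X$, as in the setting of Proposition \ref{prop:K_0sg-filtr}.

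So assume $X$ reduced and choose an irreducible component $W$ of $X$ containing $x$; it is a reduced irreducible quasi-projective variety. Applying de Jong's theorem \cite[Theorem 4.1]{de-jong} I obtain a proper, surjective, generically finite morphism $\pi\colon Y \to W$ with $Y$ smooth irreducible quasi-projective, and I set $E = \pi^{-1}(\Sing(X) \cap W)$. Since the generic point of $W$ is a regular point of $X$, the set $\Sing(X) \cap W$ is a proper closed subset of $W$, so $E \subsetneq Y$. Because $k$ is algebraically closed and $\pi$ is surjective, $x$ lifts to a closed point $y \in Y$ with residue field $k$. Working in $\Gt_0(Y)$ I would move the zero-cycle $y$ off $E$: by the Moving Lemma for zero-cycles on the smooth variety $Y$ \cite{moving-lemma}, $y$ is rationally equivalent to a cycle supported in $Y \setminus E$, and since rational equivalence of zero-cycles implies equality of classes in $\Gt_0$ (the divisor of a rational function on a curve is trivial in $\Gt_0$ of that curve), this yields closed points $y_1',\dots,y_r' \in Y \setminus E$ and integers $n_i$ with $[\OO_y] = \sum_i n_i [\OO_{y_i'}]$ in $\Gt_0(Y)$.

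It remains to push this identity down to $X$ along the proper morphisms $Y \xrightarrow{\pi} W \hookrightarrow X$, using that proper pushforward is defined on $\Gt$-theory. Since $k$ is algebraically closed every closed point has residue field $k$, so each pushforward of a closed point has degree one; hence $[\OO_x] = \sum_i n_i [\OO_{x_i'}]$ in $\Gt_0(X)$, where $x_i' = \pi(y_i') \in W \setminus \Sing(X)$. Each $x_i'$ is a nonsingular point of $X$, so by the first paragraph each $[\OO_{x_i'}]$ lies in $\can(\Kt_0(X))$, and therefore so does $[\OO_x]$; thus $[\OO_x] = 0$ in $\Ksg_0(X)$. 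The only genuinely nontrivial input is the moving step on $Y$, which I expect to be the main obstacle: it is precisely what forces us to pass to a smooth alteration via de Jong rather than attempt to move cycles directly on the singular variety $X$. Everything else --- the cokernel reformulation, perfection of skyscrapers at regular points, and the degree-one pushforwards over an algebraically closed field --- is formal.
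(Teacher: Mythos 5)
Your proof is correct and is essentially the paper's own argument: the paper deduces the corollary directly from Proposition \ref{prop:K_0sg-filtr}(4), whose proof is exactly the de Jong alteration plus moving-lemma plus proper-pushforward argument you give (including the reduction to irreducible components and the perfection of skyscrapers at regular points). Your observation that reducedness is genuinely needed (cf.\ Example \ref{ex:k-eps}) is a correct and worthwhile precision, since the corollary as literally stated omits the hypotheses of the proposition it invokes.
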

\begin{proof} 
This is equivalent to Proposition $\ref{prop:K_0sg-filtr}$ (4).
\end{proof}

\begin{remark}
The result of the Corollary does not hold if
$k$ is not algebraically closed: see Example \ref{ex:x2-y2},
where $X/\R$ is a curve, $\Ksg_0(X) = \Z_2$ and the generator is supported in codimension one.
\end{remark}

\begin{corollary}\label{cor:k-0-supp-sing-zero}
If $k$ is an algebraically closed field, and $X$ has isolated singularities then 
$\Ksg_0(X \text{ on } \Sing(X)) = 0$.
\end{corollary}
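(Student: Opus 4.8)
The plan is to reduce everything to the vanishing of classes of structure sheaves of closed points in $\Ksg_0(X)$ that was already established in Corollary \ref{cor:structure-sheaves-vanish}, using the injectivity statement of Lemma \ref{lem:K-th-supp}. Throughout write $Z = \Sing(X)$; since $X$ has isolated singularities, $Z$ is a finite set of closed points.

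First I would invoke the support analogue of the fundamental exact sequence (Remark \ref{rem:K-th-supp-seq}), whose right end reads
\[
\Kt_0(X \text{ on } Z) \overset{\can}{\to} \Gt_0(Z) \to \Ksg_0(X \text{ on } Z) \to 0.
\]
In particular the natural map $\Gt_0(Z) \to \Ksg_0(X \text{ on } Z)$ is surjective. Since $k$ is algebraically closed and $Z$ is zero-dimensional, $\Gt_0(Z)$ is generated (by dévissage) by the classes $[\OO_x]$ of the structure sheaves of the closed singular points $x \in Z$, so it suffices to track where these generators are sent.

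Next I would use Lemma \ref{lem:K-th-supp}: the natural map $\Ksg_0(X \text{ on } Z) \to \Ksg_0(X)$ is injective, so the problem reduces to showing that the image of $\Ksg_0(X \text{ on } Z)$ in $\Ksg_0(X)$ vanishes. By functoriality of the Drinfeld quotient construction through the dg-enhancements, the square
\[
\begin{CD}
\Gt_0(Z) @>>> \Ksg_0(X \text{ on } Z) \\
@VVV @VVV \\
\Gt_0(X) @>Q>> \Ksg_0(X)
\end{CD}
\]
commutes, where the left vertical map is induced by $\Db_Z(X) \hookrightarrow \Db(X)$ and sends $[\OO_x]$ to $[\OO_x] \in \Gt_0(X)$, while $Q$ is the quotient homomorphism. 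By Corollary \ref{cor:structure-sheaves-vanish} each $[\OO_x]$ dies in $\Ksg_0(X)$, so the composite $\Gt_0(Z) \to \Ksg_0(X)$ along the bottom-left edge is zero, and hence so is the top-right composite. Since $\Gt_0(Z) \to \Ksg_0(X \text{ on } Z)$ is surjective while its composition with the injection $\Ksg_0(X \text{ on } Z) \hookrightarrow \Ksg_0(X)$ is zero, I conclude $\Ksg_0(X \text{ on } Z) = 0$.

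The only genuinely delicate point is the commutativity of the displayed square: one must check that the fully faithful map of Lemma \ref{lem:K-th-supp}, induced by $\Dsg_Z(X) \to \Dsg(X)$, and the inclusion-induced pushforward $\Gt_0(Z) \to \Gt_0(X)$ are compatible, i.e.\ that the square of triangulated categories with $\Db_Z(X), \Dsg_Z(X), \Db(X), \Dsg(X)$ commutes and lifts to the dg-enhancements. This is precisely the kind of compatibility guaranteed by the fact, used throughout Section \ref{sec:singularity-cat}, that all the relevant pull-back, pushforward and Verdier-quotient functors lift to dg-enhancements; everything else in the argument is formal.
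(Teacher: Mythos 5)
Your proof is correct and follows essentially the same route as the paper: both arguments embed $\Ksg_0(X \text{ on } \Sing(X))$ into $\Ksg_0(X)$ via Lemma \ref{lem:K-th-supp}, observe that its image is generated by classes of sheaves supported on the zero-dimensional singular locus, and kill those classes using the vanishing of $[\OO_x]$ in $\Ksg_0(X)$ (your Corollary \ref{cor:structure-sheaves-vanish} is exactly Proposition \ref{prop:K_0sg-filtr}~(4), which the paper cites). You merely spell out more explicitly the surjection $\Gt_0(\Sing(X)) \twoheadrightarrow \Ksg_0(X \text{ on } \Sing(X))$ and the compatibility square, which the paper leaves implicit.
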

\begin{proof}
By definition $\Ksg_0(X \text{ on } \Sing(X)) = \Kt_0(\Dsg_{\Sing(X)}(X))$, and from
Proposition \ref{prop:orl-supp} and Theorem \ref{thm:thomason} it follows that
\[
\Ksg_0(X \text{ on } \Sing(X)) \subset \Ksg_0(X) 
\]
is generated by classes of coherent sheaves supported on the singular locus,
so the first group has to be zero by Proposition $\ref{prop:K_0sg-filtr}$ (4)
as the singular locus is zero-dimensional by assumption.
\end{proof}

\begin{corollary}\label{cor:K_0^Sg-of-curves-and-surfaces} Let $k$ be an algebraically closed field.
\begin{itemize}
\item[(1)] If $X$ is a connected quasi-projective curve with $N$ irreducible components, then
$\Ksg_0(X)\simeq\Z^{N-1}.$ In particular, $\Ksg_0(X) = 0$ if and only if $X$ is irreducible.
\item[(2)] If $X$ is a normal irreducible quasi-projective surface, then
$\Ksg_0(X)\simeq\Cl(X)/\Pic(X).$
In particular, $\Ksg_0(X) = 0$ if and only if $X$ is factorial.
\end{itemize}
\end{corollary}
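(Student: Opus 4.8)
The plan is to deduce both parts directly from the computation of the graded pieces of the topological filtration on $\Ksg_0(X)$ given in Proposition \ref{prop:K_0sg-filtr}. The guiding observation is that in dimension one or two the filtration is so short that knowing each graded piece $\Gr^i \Ksg_0(X)$ determines $\Ksg_0(X)$ outright.

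First I would treat the curve case, where $n = 1$ and the filtration collapses to $0 = F^2 \Ksg_0(X) \subseteq F^1 \Ksg_0(X) \subseteq F^0 \Ksg_0(X) = \Ksg_0(X)$, leaving only the two associated graded groups $\Gr^0$ and $\Gr^1$. Part (4) of Proposition \ref{prop:K_0sg-filtr} gives $\Gr^1 \Ksg_0(X) = \Gr^n \Ksg_0(X) = 0$, and since $\Gr^1 = F^1/F^2 = F^1$ this forces $F^1 \Ksg_0(X) = 0$. Consequently $\Ksg_0(X) = \Gr^0 \Ksg_0(X) \simeq \Z^{N-1}$ by part (1) of the proposition, and the ``in particular'' claim follows since $\Z^{N-1} = 0$ exactly when $N = 1$, i.e.\ when $X$ is irreducible.

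For the surface case I would take $X$ normal and irreducible, so that $n = 2$ and there are three a priori nonzero graded pieces $\Gr^0, \Gr^1, \Gr^2$. Irreducibility gives $\Gr^0 \Ksg_0(X) = \Z^{N-1} = 0$ by part (1), and being a surface gives $\Gr^2 \Ksg_0(X) = \Gr^n \Ksg_0(X) = 0$ by part (4); thus $\Ksg_0(X)$ is concentrated in the single degree $1$, and $\Ksg_0(X) = \Gr^1 \Ksg_0(X) \simeq \Cl(X)/\Pic(X)$ by part (2). Again the ``in particular'' statement is immediate, since $\Cl(X)/\Pic(X) = 0$ is the definition of factoriality.

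The whole argument is purely formal manipulation of a finite filtration, so I do not expect any genuine obstacle; all the mathematical content has already been packaged into Proposition \ref{prop:K_0sg-filtr}. The one hypothesis that is doing real work, and hence the only place demanding care, is the algebraic closedness of $k$, which is what makes the top graded piece $\Gr^n \Ksg_0(X)$ vanish (via the Moving Lemma argument in the proof of that proposition). As Example \ref{ex:x2-y2} shows, dropping this assumption invalidates the conclusion already for curves.
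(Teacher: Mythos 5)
Your proof is correct and follows essentially the same route as the paper: both deduce the result by observing that in dimension one or two the topological filtration on $\Ksg_0(X)$ has at most one nontrivial graded piece, identified via Proposition \ref{prop:K_0sg-filtr}. Your additional remark about where algebraic closedness enters (the vanishing of $\Gr^n$) matches the paper's own commentary following Corollary \ref{cor:structure-sheaves-vanish}.
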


\begin{proof}
We start by noticing that if $\Gr^i \Ksg_0(X)$ is the only nontrivial quotient of the topological filtration, then $\Ksg_0(X) = \Gr^i \Ksg_0(X)$.
(1) follows as $\Gr^i \Ksg_0(X)$ are all zero except for $\Gr^0 \Ksg_0(X) = \Z^{N-1}$, and similarly
(2) follows using irreducibility of $X$ since $\Gr^i \Ksg_0(X)$ are all zero except for $\Gr^1 \Ksg_0(X) = \Cl(X) / \Pic(X)$.
\end{proof}

Recall functoriality of the singularity $\Kt$-groups stated in Lemmas \ref{lem:funct}, \ref{lem:open-iso},
\ref{lem:homotopy-inv}. We now explain how the topological filtration is affected by pull-back and push-forward.

\begin{lemma}\label{lemma:finite-tor-morph}
Let $\phi: X \to Y$ be a morphism of finite $\Tor$-dimension.
\begin{enumerate}
 \item If $\phi$ is flat or a regular closed embedding, then $\phi^*(F^i \Ksg_0(Y)) \subset F^{i} \Ksg_0(X)$.
 \item If $\phi$ is a vector bundle or an open embedding containing the singular locus of $Y$, 
 then $\phi^*: F^i \Ksg_0(Y) \simeq F^i \Ksg_0(X)$.
 \item If $\phi$ is proper of codimension $c:=\dim(Y)-\dim(X)$, then $\phi_*(F^i \Ksg_0(X)) \subset
 F^{i+c} \Ksg_0(Y)$.
\end{enumerate}
\end{lemma}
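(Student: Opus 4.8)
The plan is to push everything down to the $\Gt$-theory level through the surjection $Q\colon \Gt_0 \twoheadrightarrow \Ksg_0$, exploiting that by definition $F^i \Ksg_0 = Q(F^i \Gt_0)$. First I would record the two commutative squares expressing compatibility of $\phi^*$ (resp. $\phi_*$) on $\Ksg_0$ with the derived pullback (resp. pushforward) on $\Gt_0$ and with the maps $Q$. This is immediate: by Lemma \ref{lem:funct} the maps on $\Ksg_0$ are induced by dg-functors lifting the derived functors on $\Db$, while $Q$ is induced by the Verdier quotient $\Db \to \Dsg$; the derived functors descend to $\Dsg$ exactly because $\phi$ has finite $\Tor$-dimension (resp. is proper of finite $\Tor$-dimension), so they commute with $Q$. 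Granting these squares, $\phi^*(F^i \Ksg_0(Y)) = \phi^*(Q(F^i \Gt_0(Y))) = Q(\phi^*(F^i \Gt_0(Y)))$, and likewise for $\phi_*$, so each assertion reduces to the corresponding statement for the topological filtration on $\Gt_0$.

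For the flat case of (1) and for (3) I would argue directly with supports. If $\phi$ is flat then $\phi^*[\FF] = [\phi^*\FF]$ with $\Supp \phi^*\FF = \phi^{-1}(\Supp \FF)$, and flatness gives $\codim_X \phi^{-1}(Z) = \codim_Y Z$, so $F^i \Gt_0(Y) \to F^i \Gt_0(X)$. If $\phi$ is proper of codimension $c$, then for coherent $\FF$ every $R^q\phi_*\FF$ is supported on $\phi(\Supp \FF)$, whose dimension is at most $\dim \Supp \FF \le \dim X - i = \dim Y - c - i$; rewriting this as a codimension bound in $Y$ gives $\phi_*(F^i \Gt_0(X)) \subset F^{i+c}\Gt_0(Y)$.

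The one delicate point, and the main obstacle, is the regular-closed-embedding case of (1), where the naive set-theoretic support can drop codimension: pulling $[\OO_D]$ back along the inclusion of a Cartier divisor $D$ into itself yields $[\OO_D] - [N^{\vee}_{D/Y}]$, whose support is all of $D$ yet which lies in $F^1$ by K-theoretic cancellation. Since $F^i \Gt_0(Y)$ is generated by classes $[\OO_W]$ of integral subschemes $W$ of codimension $\ge i$, it suffices to show $i^*[\OO_W] \in F^i \Gt_0(X)$; when $W$ meets $X$ properly this follows from the support bound, and the improper case is precisely the standard functoriality of the coniveau filtration under refined Gysin maps. I would establish it by deformation to the normal cone: $i$ deforms through the regular embedding $X \times \A^1 \hookrightarrow M^{\circ}_X(Y)$ to the zero section $s\colon X \hookrightarrow N_{X/Y}$ of the rank-$c$ normal bundle, so that $i^*$ factors as $s^*$ composed with specialization; as $s^*$ is inverse to the flat, hence codimension-preserving, projection pullback, and specialization is built from flat pullbacks and localization, both preserve $F^{\bullet}$, whence so does $i^*$. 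Equivalently one reduces Zariski-locally to a tower of Cartier divisors and invokes a moving lemma; either way this is where the real work sits and where the cited intersection theory of \cite{sga-6, gillet, fulton-lang} enters.

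Finally, for (2) I would invoke the isomorphisms already proved rather than reconstruct them: Lemma \ref{lem:homotopy-inv} gives $p^*\colon \Ksg_0(X) \xrightarrow{\sim} \Ksg_0(V)$ for a vector bundle $p\colon V \to X$, and Lemma \ref{lem:open-iso} gives $j^*\colon \Ksg_0(X) \xrightarrow{\sim} \Ksg_0(U)$ for an open $U \supseteq \Sing(X)$. A vector bundle projection is flat while its inverse is the zero-section pullback, a regular embedding, so by (1) both $p^*$ and its inverse preserve $F^{\bullet}$, forcing $p^*\colon F^i \Ksg_0(X) \xrightarrow{\sim} F^i \Ksg_0(V)$. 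For the open embedding, $j^*$ is flat so $j^*(F^i) \subseteq F^i$ by (1), while every codimension-$\ge i$ class on $U$ extends, via the scheme-theoretic closure of its support, to a codimension-$\ge i$ class on $X$; hence $j^*\colon F^i \Gt_0(X) \to F^i \Gt_0(U)$ is surjective, and combined with injectivity of the isomorphism $j^*$ on $\Ksg_0$ this gives $j^*\colon F^i \Ksg_0(X) \xrightarrow{\sim} F^i \Ksg_0(U)$.
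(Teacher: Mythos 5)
Your proposal is correct and follows essentially the same route as the paper: everything is reduced to the topological filtration on $\Gt_0$ via the defining surjection $Q$, the proper and flat cases are handled by direct support estimates, and the open-embedding case uses exactly the paper's extension-of-coherent-subsheaves argument. The only difference is one of delegation: where you sketch the deformation-to-the-normal-cone argument for regular embeddings and derive the vector bundle case from part (1), the paper simply cites Gillet (Theorem 5.27 and Lemmas 5.28--5.29) for those facts, so your identification of the regular-embedding case as the genuinely nontrivial input is accurate.
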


\begin{proof} 
(1) The result in the case of flat morphisms follows from \cite[Lemma 5.28]{gillet},
while in the case of regular embeddings it follows from \cite[Theorem 5.27]{gillet},

(2) The vector bundle case is \cite[Lemma 5.29]{gillet}. Let $\phi$ be an open embedding,
since it is flat by (1) we have $\phi^*(F^i \Ksg_0(Y)) \subset F^{i} \Ksg_0(X)$
and we need to show that this is an equality.
For that it suffices to show that every coherent sheaf $\FF$ on $X$ with support in codimension
$i$ can be extended to a coherent sheaf $\FF'$ on $Y$ with the same bound on support.

One constructs $\FF '$ as a coherent subsheaf of the quasi-coherent 
sheaf $\phi_*( \FF)$ (see \cite[Ex. Chapter 2, 5.15]{hartshorne}.
Since $\phi_*(\FF)$ is supported on the closure of the support of $\FF$, 
we see that $\FF'$ is supported in codimension $i$.

(3) If $\phi:X\to Y$ is a proper morphism of codimension $c$, then 
$\Supp(\phi_*\EE)\subset\phi(\Supp(\EE))$ and thus $\phi_*:F^i\Gt_0(X)\to F^{i+c}\Gt_0(Y)$ 
(see also \cite{fulton-lang}, Ch. VI, Prop. 5.6)
which implies the result.
\end{proof}

We now explain how Kn\"orrer periodicity (Theorem \ref{Thm:Knorrer}) shifts the topological filtration.

\begin{proposition}\label{prop:knorrer-shift}
The isomorphism $\Ksg_0(Z_f) \simeq \Ksg_0(Z_g)$ induced by
Theorem \ref{Thm:Knorrer} shifts the topological filtration by one, 
that is for all $i \ge 0$ we have natural isomorphisms 
$F^i \Ksg_0(Z_f) \simeq F^{i+1} \Ksg_0(Z_g)$ and $\Gr^i \Ksg_0(Z_f) \simeq \Gr^{i+1} \Ksg_0(Z_g)$.
\end{proposition}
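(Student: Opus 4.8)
The plan is to read off the effect of the two elementary functors making up the Kn\"orrer equivalence, and then to show the resulting filtration bound is sharp by exploiting that the complement of $W$ in $Z_g$ is smooth. First I record the geometry. As $f$ is a nonzero function on the smooth scheme $X$, the hypersurfaces satisfy $\dim Z_f=\dim X-1=:n$, $\dim W=n+1$ and $\dim Z_g=n+2$. The projection $p\colon W=Z_f\times\A^1\to Z_f$ is a trivial line bundle, while the identity $W=\{y=0\}\cap Z_g$ exhibits $i\colon W\hookrightarrow Z_g$ as a regular closed embedding of codimension one. By Lemma~\ref{lemma:finite-tor-morph}(2), $p^*$ is a strict filtered isomorphism $F^i\Ksg_0(Z_f)\xrightarrow{\sim}F^i\Ksg_0(W)$, and by Lemma~\ref{lemma:finite-tor-morph}(3) the proper codimension-one map $i_*$ sends $F^j\Ksg_0(W)$ into $F^{j+1}\Ksg_0(Z_g)$. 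Writing the Kn\"orrer isomorphism as $\Psi=i_*p^*$, these two facts give $\Psi(F^i\Ksg_0(Z_f))\subseteq F^{i+1}\Ksg_0(Z_g)$, and since $\Psi$ is an isomorphism (Theorem~\ref{Thm:Knorrer}) its restriction to each $F^i$ is automatically injective. Because $p^*$ is already a filtered isomorphism, it remains only to prove that $i_*$ restricts to a \emph{surjection} $F^j\Ksg_0(W)\twoheadrightarrow F^{j+1}\Ksg_0(Z_g)$ for every $j$.

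For surjectivity I would use the open complement $U:=Z_g\setminus W$. Over $\{y\neq0\}$ the equation $f+xy=0$ solves as $x=-f/y$, so $U\cong X\times\G_m$ is \emph{smooth}, whence $\can\colon\Kt_0(U)\xrightarrow{\sim}\Gt_0(U)$. Given $\alpha\in F^{j+1}\Ksg_0(Z_g)$, represent it as $\alpha=Q[\EE]$ with $\EE$ coherent of codimension $\ge j+1$, where $Q\colon\Gt_0(Z_g)\twoheadrightarrow\Ksg_0(Z_g)$. The crucial step is to lift the restriction $[\EE|_U]\in F^{j+1}\Kt_0(U)$ to a \emph{perfect} complex on $Z_g$ of codimension $\ge j+1$. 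For this I use the projection $\pi\colon Z_g\to X$, which is flat by miracle flatness (its fibres are plane conics, of constant dimension one): for a perfect complex $\GG$ on $X$ one has $(\pi^*\GG)|_U=\rho^*\GG$, where $\rho\colon U\cong X\times\G_m\to X$ is the projection. Since $X$ is regular, the Bass fundamental theorem gives $\Kt_0(U)=\Kt_0(X)\oplus\Kt_{-1}(X)=\Kt_0(X)$ via $\rho^*$, compatibly with the codimension filtration; hence $[\EE|_U]=\rho^*[\GG]$ for a perfect $\GG$ on $X$ of codimension $\ge j+1$, and $\PP:=\pi^*\GG$ is a perfect complex on $Z_g$ of codimension $\ge j+1$ (flat pull-back preserves codimension, Lemma~\ref{lemma:finite-tor-morph}(1)) with $[\PP|_U]=[\EE|_U]$ in $\Kt_0(U)=\Gt_0(U)$.

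Now $[\EE]-[\PP]$ lies in $F^{j+1}\Gt_0(Z_g)$ and restricts to zero on $U$, so the Quillen localization sequence $\Gt_0(W)\xrightarrow{i_*}\Gt_0(Z_g)\xrightarrow{j^*}\Gt_0(U)\to0$, which is compatible with the codimension filtration (dévissage identifies $\ker(j^*)\cap F^{j+1}\Gt_0(Z_g)$ with $i_*(F^j\Gt_0(W))$), yields $[\EE]-[\PP]=i_*(\beta)$ with $\beta\in F^j\Gt_0(W)$. Applying $Q$ and using $Q[\PP]=0$ gives $\alpha=i_*(Q\beta)\in i_*(F^j\Ksg_0(W))$, the desired surjectivity. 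Combined with injectivity this gives $\Psi\colon F^i\Ksg_0(Z_f)\xrightarrow{\sim}F^{i+1}\Ksg_0(Z_g)$, and passing to the quotients $F^\bullet/F^{\bullet+1}$ yields $\Gr^i\Ksg_0(Z_f)\cong\Gr^{i+1}\Ksg_0(Z_g)$, naturality being inherited from that of $\Psi$.

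The hard part will be exactly the middle step: lifting $[\EE|_U]$ to a perfect complex on $Z_g$ of the same codimension, i.e.\ the filtered surjectivity of $j^*\colon F^{j+1}\Kt_0(Z_g)\to F^{j+1}\Kt_0(U)$. At the level of Chow groups the analogous pushforward $i_*\colon\CH_*(W)\to\CH_*(Z_g)$ is \emph{not} surjective (its cokernel is $\CH_{*-1}(X)$, coming from cycles not contained in $W$), so one cannot argue with cycles alone; the content is that these extra classes become cohomological in $\Ksg_0$, which is what the flatness of $\pi$ and the regularity of $X$ buy us by producing the required perfect complexes as pull-backs from $X$. The only remaining loose end is the routine verification that $\rho^*$ is compatible with, and surjective onto, the codimension filtration on $U$, which I would settle by factoring $\rho$ through the line bundle $X\times\A^1\to X$ and the open restriction to $X\times\G_m$, invoking Lemma~\ref{lemma:finite-tor-morph}(2) and $\Kt_{-1}(X)=0$.
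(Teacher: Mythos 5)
Your first paragraph is exactly the paper's starting point: Lemma \ref{lemma:finite-tor-morph} gives $i_*p^*(F^i\Ksg_0(Z_f))\subseteq F^{i+1}\Ksg_0(Z_g)$, and injectivity on each $F^i$ is automatic. The whole content of the proposition is the reverse inclusion, and there your argument has a genuine gap. The step you dispose of in a parenthesis --- ``d\'evissage identifies $\ker(j^*)\cap F^{j+1}\Gt_0(Z_g)$ with $i_*(F^j\Gt_0(W))$'' --- is not a consequence of d\'evissage and is the real difficulty. What d\'evissage and Quillen localization give is exactness of
\begin{equation*}
\Gt_0(\Coh_{\le k}(W))\to \Gt_0(\Coh_{\le k}(Z_g))\to \Gt_0(\Coh_{\le k}(U))\to 0 ,
\end{equation*}
where $\Coh_{\le k}$ denotes sheaves with support of dimension $\le k$. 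But $F_k\Gt_0$ is only the \emph{image} of $\Gt_0(\Coh_{\le k})$ in $\Gt_0$, and the comparison map $\Gt_0(\Coh_{\le k}(U))\to\Gt_0(U)$ is far from injective (its kernel is generated by boundaries of $K_1$-classes of higher-dimensional points; this is the nontrivial part of the Brown--Gersten--Quillen spectral sequence). So from $[\EE]-[\mathcal{P}]\in F^{j+1}\Gt_0(Z_g)$ and $j^*([\EE]-[\mathcal{P}])=0$ you may conclude $[\EE]-[\mathcal{P}]=i_*\beta$ for \emph{some} $\beta\in\Gt_0(W)$, but not that $\beta$ (or any other preimage, which is only determined modulo $\ker i_*$) can be chosen in $F^j\Gt_0(W)$, nor even in $F^j\Gt_0(W)+\can(\Kt_0(W))$, which is what you actually need. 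Everything before this point is fine --- $U=Z_g\setminus W\cong X\times\G_m$ is smooth, $\pi\colon Z_g\to X$ is flat by miracle flatness, and the lift of $[\EE|_U]$ to a perfect complex $\pi^*\GG$ of the right codimension is a genuinely nice observation --- but it only reduces the problem to controlling the error term supported on $W$, and that is where the argument stops being a proof.

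The paper closes this hole by a completely different device: it rewrites the Kn\"orrer functor as $j^*\Phi$, where $\Phi(-)={i_E}_*(\OO_E(-1)\otimes\pi^*(-))$ is the embedding of $\Db(Z_f)$ into $\Db(Y)$ for the blow-up $Y=\Bl_{Z_f\times 0}(X\times\A^1)$ with exceptional divisor $E$, and $j\colon Z_g\hookrightarrow Y$ is an open chart containing the singular locus. The point is that $\Phi$ has an explicit left adjoint $\pi_*(\OO_E(-1)\otimes i_E^*(-))[1]$ which becomes the \emph{inverse} equivalence on singularity categories and which, again by Lemma \ref{lemma:finite-tor-morph}, shifts the filtration by $-1$. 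Having both containments, $\Phi(F^i)\subseteq F^{i+1}$ and $\Phi^{-1}(F^{i+1})\subseteq F^{i}$, forces equality with no localization argument at all. If you want to salvage your route, the cheapest fix is in the same spirit: it suffices to show the induced maps $\Gr^i\Ksg_0(Z_f)\to\Gr^{i+1}\Ksg_0(Z_g)$ are injective (then a downward induction starting from $F^1\Ksg_0(Z_g)=\Ksg_0(Z_g)$, which holds since $Z_g$ is irreducible, upgrades the containment to equality), and injectivity on the graded pieces is precisely what a filtration-decreasing inverse functor provides.
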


\begin{proof}
We know by Lemma \ref{lemma:finite-tor-morph} that $p^*$ preserves the topological filtration
and that $i_*$ shifts it by one, however this only implies that 
$i_* p^* (F^i \Ksg_0(Z_f)) \subset F^{i+1} \Ksg_0(Z_g)$. To show the equality we give a different
presentation of the Kn\"orrer periodicity isomorphism.

Let $Y = Bl_{Z_f \times 0}(X \times \A^1)$ be the blow up and let $E$ be the exceptional divisor. 
Since $Z_f \times \A^1$
is a complete intersection in $X \times \A^1$ the blow up enjoys the same properties which hold in the smooth case.
For instance, $E$ is a projective bundle $\pi: E \to Z$, and there is a semiorthogonal decomposition
\cite{orlov-monoidal}, \cite[Theorem 6.9]{bergh-schn}
$$\Db(Y)=\langle \Db(Z_f),\Db(X\times\A^1)\rangle.$$

The inclusion of $\Db(Z_f)$ into $\Db(Y)$ 
is given by the fully faithful functor $\Phi: \Db(Z_f)\to \Db(Y)$ 
\[
\Phi(-)= {i_E}_*(\OO_E(-1)\otimes \pi^*(-)),
\]
and its left adjoint is
\[
\Psi(-) = \pi_*(\OO_E(-1)\otimes {i_E}^*(-))[1].
\]

Writing the open charts for the blow up one sees that one of the open charts is isomorphic to $Z_g$ while
the other one is non-singular. We write $j: Z_g \to Y$ for the open embedding of the first open chart;
on the level of singularity categories,
$j^*$ is an equivalence by Proposition $\ref{prop:reduction-open-containing-sing}$. 

We consider the composition $j^* \Phi: \Db(Z_f) \to \Db(Z_g)$ and we now will show that 
$j^* \Phi = i_* p^*$.
We note that restriction of $\pi$ to
$E \cap Z_g$ is a trivial bundle so that $E\cap Z_g = Z_f\times \A^1$ 
(in fact $\pi$ itself is a trivial bundle since the normal bundle of $Z_f \times \{0\}$ in $X \times \A^1$
is trivial).

We consider the cartesian diagram
\begin{equation*}
\xymatrix{
Z_f & \\
E\cap Z_g \ar[u]^p \ar@{->}[d]_{i}\ar@{->}[r]^{j_{E\cap Z_g}} & E\ar@{->}[d]^{i_E} \ar[ul]_\pi \\
Z_g\ar@{->}[r]^j & Y}
\end{equation*} 
and by flat base change 
we compute that
$$j^*\Phi(-)=j^* {i_E}_*(\OO_E(-1)\otimes \pi^*(-))=
 i_*( j_{E\cap Z_g}^*\OO_E(-1)\otimes j_{E\cap Z_g}^*\pi^*(-))= i_*p^*(-).$$
where we used that $j_{E\cap Z_g}^*\OO_E(-1)\simeq \OO_{E\cap Z_g}$.

Now we check the effect of $j^* \Phi$ on the toplogical filtration. We rely on Lemma \ref{lemma:finite-tor-morph}.
Since $j^*$ strictly preserves
the filtration it is sufficient to check that $\Phi$ strictly shifts the filtration by one: this holds true since
$\pi^*$ preserves the filtration while ${i_{E}}_*$ shifts it by one, and
the left adjoint $\Psi$ of $\Phi$ which will become its inverse on the level of singularity categories,
shifts the filtration by negative one: this
holds since $i_E^*$ preserves the filtration while $\pi_*$ shifts it by negative one.
\end{proof}

The next two examples consider the singularity Grothendieck group of
split nodal affine quadrics, that is ordinary double points (cf \cite[3.3]{orlov-sing-1}).

\begin{example}[Even-dimensional ordinary double points]\label{ex:A-ev}
Let $n = 2m$ and consider
the split quadratic form
\begin{equation*}
q_n = 
\sum x_iy_i+z^2\in k[x_1,y_1,\ldots ,x_m,y_m,z] 
\end{equation*}
and let $Q_n\subset\A^{n+1}$ be the $n$-dimensional nodal quadric defined by $q_n = 0$.

From Kn\"orrer periodicity we get 
\[
\Ksg_0(Q_n) \simeq \Ksg_0(k[z]/(z^2)) = \Z_2,
\]
see Example \ref{ex:k-eps}.
Furthermore, since by dimension reasons the nonzero element
of $\Ksg_0(k[z]/(z^2))$ has support in codimension zero, by the shift of the topological filtration 
of Proposition \ref{prop:knorrer-shift} we get
\[
\Ksg_0(Q_n) = \Gr^{n/2} \Ksg_0(Q_n) = \Z_2.
\]
Explicitly $\Ksg_0(Q_n)$ can be seen to be generated by the structure sheaf of $n/2$-codimensional subvariety $V(y_1,\ldots , y_m, z) \subset Q_n$.
\end{example}

\begin{example}[Odd-dimensional ordinary double points] \label{ex:A-odd}
Let $n = 2m-1$ and consider
\begin{equation*}
q_n = \sum x_iy_i\in k[x_1,y_1,\ldots ,x_m ,y_m]  
\end{equation*}
and let $Q_n\subset\A^{n+1}$ be the $n$-dimensional nodal quadric defined by $q_n = 0$.

By Kn\"orrer periodicity we get 
\begin{equation*}
\Ksg_0(Q_n) \simeq \Ksg_0(k[x,y]/(xy)),
\end{equation*}
and the latter Grothendieck group is isomorphic to $\Z$ by Corollary
\ref{cor:K_0^Sg-of-curves-and-surfaces} (1), 
generated by the structure sheaves of one of the two irreducible components 
(cf Example \ref{ex:no-support}).
Using Proposition \ref{prop:knorrer-shift} we obtain
\[
\Ksg_0(Q_n) = \Gr^{(n-1)/2} \Ksg_0(Q_n) = \Z,
\]
generated by the structure sheaf of a codimension $(n-1)/2$ linear space $V(y_1, \dots, y_m) \subset Q_n$.
\end{example}

\section{Singularity $\Kt$-theory of quotient singularities}

\subsection{The local case: non-positive $\Kt$-groups}

For a finite group $G \subset \GL_n(k)$ we consider the quotient variety
$\A^n/G=\Spec(k[x_1,\ldots ,x_n]^G)$.
In this subsection we study $\Gt_0(\A^n/G)$
as well as $\Kt$-groups $\Kt_i(\A^n/G)$ and $\Ksg_i(\A^n/G)$ for $i\leq 0$.

\begin{proposition}\label{prop:K_-1} Assume that $\A^n/G$ has an isolated singularity at the origin. Then
$$\Kt_0(\mathbb{A}^n/G)\simeq\mathbb{Z}\quad and\quad \Kt_{-j}(\mathbb{A}^n/G)=\Ksg_{-j}(\mathbb{A}^n/G)=0\ \text{for}\ j > 0.$$
\end{proposition}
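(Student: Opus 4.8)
The plan is to prove the two statements about $\Kt$ first, and then read off the statement about $\Ksg$ formally. Indeed, the isomorphisms \eqref{eq:k_-j} give $\Ksg_{-j}(\A^n/G) \simeq \Kt_{-j-1}(\A^n/G)$ for every $j \ge 1$, so the vanishing $\Ksg_{-j}(\A^n/G) = 0$ for $j > 0$ is a formal consequence of $\Kt_{-m}(\A^n/G) = 0$ for all $m \ge 2$. Thus it suffices to establish $\Kt_0(\A^n/G) \simeq \Z$ and $\Kt_{-j}(\A^n/G) = 0$ for all $j \ge 1$.

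The key idea is to pass to homotopy $\Kt$-theory $KH$, which is $\A^1$-invariant and, in characteristic zero, is tied to $\Kt$-theory through cdh-descent. By Lemma \ref{lem:contractible}(1) the variety $\A^n/G$ is $\A^1$-contractible; applying the $\A^1$-invariant functor $KH$ to a contraction $H$ and using that the two embeddings $i_0, i_1$ induce the same map on $KH$ (both being inverse to the isomorphism $p^*$), one finds $p^* \circ x_0^* = \id$ on $KH(\A^n/G)$, where $x_0$ is the origin and $p$ is the structure morphism, since $p^*x_0^* = (H i_0)^* = (H i_1)^* = \id$. As also $x_0^* \circ p^* = \id$, the map $x_0^* \colon KH(\A^n/G) \to KH(\Spec k)$ is an equivalence. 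Because $k$ is regular, $KH(\Spec k) \simeq \Kt(\Spec k)$, and therefore $KH_0(\A^n/G) \simeq \Z$ while $KH_{-j}(\A^n/G) = 0$ for all $j \ge 1$.

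It then remains to compare $\Kt$ and $KH$ in non-positive degrees. Here I would invoke the theorem of Corti\~nas--Haesemeyer--Schlichting--Weibel: in characteristic zero the homotopy fiber of $\Kt(\A^n/G) \to KH(\A^n/G)$ is governed by the failure of cdh-descent for cyclic homology, and in degrees $\le 0$ the resulting obstruction groups are assembled from the cdh-cohomology groups $H^p_{\cdh}(\A^n/G, \Omega^q_{/k})$. Choosing a resolution $\pi \colon \wt X \to \A^n/G$, these compute as $H^p(\wt X, \Omega^q_{\wt X})$. Since $\A^n/G$ is affine and quotient singularities are rational in characteristic zero, we have $R\pi_* \OO_{\wt X} = \OO_{\A^n/G}$ and hence $H^p(\wt X, \OO) = 0$ for $p > 0$; the isolated-singularity hypothesis is then used to control the contributions of the higher differentials $\Omega^q$, $q \ge 1$, in the remaining range. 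Granting that all these obstruction groups vanish in degrees $\le 0$, the comparison map $\Kt_n(\A^n/G) \to KH_n(\A^n/G)$ is an isomorphism for $n \le 0$, and combining with the previous paragraph yields $\Kt_0(\A^n/G) \simeq \Z$ and $\Kt_{-j}(\A^n/G) = 0$ for $j \ge 1$, as required.

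The main obstacle is precisely this final vanishing. The $\OO$-part of the obstruction is killed by rationality of the singularity together with affineness (so that $H^p(\wt X,\OO) = 0$ for $p>0$), but one must also verify that the cdh-cohomology of the higher K\"ahler differentials $\Omega^q_{/k}$ contributes nothing in non-positive total degree. This is the technical heart of the argument, and it is exactly where both the characteristic-zero cdh-cohomology vanishing theorems for differential forms and the isolated quotient singularity hypothesis are genuinely needed.
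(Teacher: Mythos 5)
Your overall strategy is the right one and is essentially the paper's: both arguments funnel the computation of $\Kt_{\le 0}(\A^n/G)$ through the Corti\~nas--Haesemeyer--(Schlichting/Walker)--Weibel cdh-descent machinery, which reduces everything to the vanishing of the cdh-cohomology groups of K\"ahler differentials in positive cohomological degree (the paper invokes \cite[Theorem 1.2]{weibel-co} directly for the positively graded ring $k[x_1,\dots,x_n]^G$ rather than routing through $KH$, but that is only a difference of packaging; your $\A^1$-contraction argument for $KH$ and your reduction of $\Ksg_{-j}$ to $\Kt_{-j-1}$ via \eqref{eq:k_-j} are both fine). The problem is that the step you flag as ``the technical heart of the argument'' --- the vanishing of $H^q_{\cdh}(\A^n/G,\Omega^p_{/\Q})$ for $q>0$ and all $p$ --- is exactly the content of the proposition, and you grant it rather than prove it. As written, the proposal establishes only the (easy) $KH$-computation and defers the entire difficulty.

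Two more specific points. First, your guess about \emph{where} the hypotheses enter is off: the paper does not use the isolated-singularity assumption to control the higher differentials, and your claim that the obstruction groups ``compute as $H^p(\wt X,\Omega^q_{\wt X})$ for a resolution $\wt X$'' is not a general fact about cdh-cohomology and is not how the argument goes. The actual mechanism is that quotient singularities are V-manifolds, so by \cite[Lemma 2.1]{weibel-co-negative-bass} together with Du Bois's theorem \cite[Theorem 5.12]{du-bois} one has $H^q_{\cdh}(\A^n/G,\Omega^p_{/k})\simeq H^q(\A^n,\Omega^p_{\A^n/k})^G$, which vanishes for $q>0$ simply because $\A^n$ is affine --- no rationality-of-singularities or isolatedness input is needed at this stage. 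Second, the CHSW obstruction groups involve the \emph{absolute} differentials $\Omega^p_{/\Q}$, not $\Omega^p_{/k}$; passing between the two requires the K\"unneth decomposition $H^q_{\cdh}(X,\Omega^p_{/\Q})\simeq\bigoplus_{p=i+j}H^q_{\cdh}(X,\Omega^i_{/k})\otimes_k\Omega^j_{k/\Q}$, which the paper justifies by descending $\A^n/G$ to a model over a number field $k_0$ and splitting $\Omega^1_{X/k_0}$. This is a genuine step that your sketch elides entirely. Finally, note that in the paper's route one still has to know $\Pic(\A^n/G)=0$ (Picard group of a normal positively graded algebra) to get $\Kt_0\simeq\Z$ from the \cite{weibel-co} formula; in your $KH$-route this would come for free once the comparison isomorphism is established, but only once it is.
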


\begin{proof} 


Since $k[x_1,\ldots ,x_n]^G$ is a positively graded algebra, we can use 
\cite[Theorem 1.2]{weibel-co} to express non-positive $K$-theory  groups
as follows:
$$\Kt_0(\A^n/G)=\Z\oplus\Pic(\A^n/G)\oplus\bigoplus_{i=1}^{n-1} H^i_{\cdh}(\A^n/G,\Omega^i_{/ \Q})/dH^i_{\cdh}(\A^n/G,\Omega^{i-1}_{/ \Q})$$
and for $j > 0$
$$\Kt_{-j}(\A^n/G)= H_{\cdh}^j(\A^n/G,\OO)\oplus\bigoplus_{i=1}^{n-j-1} H^{i+j}_{\cdh}(\A^n/G,\Omega^i_{/ \Q})/dH^{i+j}_{\cdh}(\A^n/G,\Omega^{i-1}_{/ \Q}).$$
Here $H^*_{\cdh}$ denotes cohomology of $\A^n/G$ defined via the $\cdh$-topology on $\Sch /k$ \cite{suslin-voevodsky} and the group $dH^j_{\cdh}(\A^n/G,\Omega^{i-1}_{/ \Q})$ is the image of the map $d:H^j_{\cdh}(\A^n/G,\Omega^{i-1}_{/ \Q})\to H^j_{\cdh}(\A^n/G,\Omega^{i}_{/ \Q})$ induced by the K\"ahler differential.

Let us show that cohomology groups $H^q(\A^n/G, \Omega^{p}_{/ \Q})$ are zero for
all $q > 0$, $p \ge 0$.
Since every $G$-representation is defined over
a subfield $k_0 \subset k$ which is a finite extension of $\Q$, 
$\A^n / G$ admits a model over $k_0$.

Let us show that for every variety $X/k$ admitting a model $X_0/k_0$
we have 
the K\"unneth formula for $\cdh$ cohomology groups (cf. \cite[Corollary 4.5]{weibel-co-toric}):
\begin{equation}\label{eq:cdh-basechange}
H^q_{\cdh}(X,\Omega^p_{/ \Q})\simeq \bigoplus_{p=i+j}H^q_{\cdh}(X,\Omega^i_{/k})\otimes_k \Omega^j_{k/ \Q},
\end{equation}
for $p,q\geq 0$.

Let us first assume that $X$ is smooth.
In this case using \cite[Corollary 2.5]{weibel-co-vorst}, 
\eqref{eq:cdh-basechange} is equivalent to the isomorphism of Zariski
cohomology groups
\begin{equation}\label{eq:cdh-basechange-Zar}
H^q(X,\Omega^p_{X/ \Q})\simeq \bigoplus_{p=i+j}H^q(X,\Omega^i_{X/k})\otimes_k \Omega^j_{k/ \Q}.
\end{equation}

Note that as $k_0$ is finite separable over $\Q$ we may replace
$\Q$ by $k_0$ in \eqref{eq:cdh-basechange-Zar}.
Since $X$ has a model over $k_0$ we have a splitting
$\Omega^1_{X/ k_0}\simeq \Omega^1_{X/k}\oplus (\Omega^1_{k/ k_0} \otimes \OO_X)$ 
and hence
$$\Omega^p_{X/ k_0}\simeq \bigoplus_{p=i+j} \Omega^i_{X/k}\otimes \Omega^j_{k/ k_0}.$$
Applying cohomology one obtains 
\eqref{eq:cdh-basechange-Zar} and hence \eqref{eq:cdh-basechange} in the smooth case.

In general, that is when $X$ is singular, \eqref{eq:cdh-basechange} follows by induction on dimension using the cdh descent exact sequence \cite[12.1]{suslin-voevodsky} applied to a resolution of $X$
definable over $k_0$.

Finally, by \cite[Lemma 2.1]{weibel-co-negative-bass} and \cite[Theorem 5.12]{du-bois} we see that
$$H_{\cdh}^q(\A^n/G ,\Omega^p_{/k})\simeq  H^q(\A^n,\Omega^p_{\A^n/k})^G,$$
and since $\A^n$ is affine, these cohomology groups vanish for $q > 0$.
So combining everything together we get $\Kt_{-j}(\A^n/G)=0$ and $\Kt_0(\A^n/G)=\Z\oplus\Pic(\A^n/G) = \Z$,
since the Picard group of a normal graded $k$-algebra is zero.
\end{proof}

\begin{remark}
In the previous version of this paper we claimed that every vector bundle on
$\A^n/G$ is trivial. We do not know if this statement is true. We thank
Sasha Kuznetsov for pointing out an error in our argument.
\end{remark}

\begin{corollary}\label{cor:A^n/G-idempotent-complete} 
If $\A^n/G$ is an isolated singularity, then the singularity category $\Dsg(\A^n/G)$ is idempotent complete.
\end{corollary}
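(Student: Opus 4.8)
The plan is to deduce idempotent completeness of $\Dsg(\A^n/G)$ directly from Lemma \ref{lemma:idempotent-K-1}, which states that $\Dsg(X)$ is idempotent complete if and only if $\Kt_{-1}(X) = 0$. Since Proposition \ref{prop:K_-1} has just established, under the hypothesis that $\A^n/G$ has an isolated singularity at the origin, that $\Kt_{-j}(\A^n/G) = 0$ for all $j > 0$, the case $j = 1$ gives $\Kt_{-1}(\A^n/G) = 0$ immediately.

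First I would invoke Lemma \ref{lemma:idempotent-K-1} with $X = \A^n/G$; this reduces the statement to the vanishing of $\Kt_{-1}(\A^n/G)$. Then I would cite Proposition \ref{prop:K_-1}, specializing its conclusion $\Kt_{-j}(\A^n/G) = 0$ (valid for all $j > 0$) to $j = 1$. Combining these two facts closes the argument in a single line.

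There is essentially no obstacle here: the corollary is a formal consequence of the preceding proposition together with the general criterion of Lemma \ref{lemma:idempotent-K-1}. The only thing to verify is that the hypotheses match, namely that ``$\A^n/G$ is an isolated singularity'' in the corollary is precisely the hypothesis ``$\A^n/G$ has an isolated singularity at the origin'' required by Proposition \ref{prop:K_-1}; these are the same condition, so the implication is immediate.

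\begin{proof}
By Lemma \ref{lemma:idempotent-K-1}, the category $\Dsg(\A^n/G)$ is idempotent complete if and only if $\Kt_{-1}(\A^n/G) = 0$. Since $\A^n/G$ has an isolated singularity, Proposition \ref{prop:K_-1} gives $\Kt_{-j}(\A^n/G) = 0$ for all $j > 0$, and in particular $\Kt_{-1}(\A^n/G) = 0$. The claim follows.
\end{proof}
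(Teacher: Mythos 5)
Your proof is correct and is exactly the paper's own argument: cite Proposition \ref{prop:K_-1} for the vanishing of $\Kt_{-1}(\A^n/G)$ and apply the criterion of Lemma \ref{lemma:idempotent-K-1}. Nothing further is needed.
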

\begin{proof}
As $\Kt_{-1}(\A^n/G) = 0$ by Proposition \ref{prop:K_-1}, the result follows from Lemma \ref{lemma:idempotent-K-1}.
\end{proof}

\begin{remark}
It is not true that every affine quasi-homogeneous or $\A^1$-contractible singularity has an idempotent complete
singularity category: the simplest example is provided
by the so-called Bloch-Murthy surface singularity $X$ given by 
$x^2 + y^3 + z^7 = 0$
which has non-vanishing $K_{-1}(X)$ \cite[Example 6.1]{weibel-surfaces}.
\end{remark}


\begin{proposition}\label{prop:G_0} Let $G$ be a finite group acting linearly on the affine space $\mathbb{A}^n$ over a field $k$. Then we have
\[
\Gt_0(\A^n / G) = \Z \oplus \Ksg_0(\A^n / G),
\] 
and $\Ksg_0(\A^n / G)$ is a finite torsion group.
\end{proposition}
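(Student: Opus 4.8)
The plan is to split the statement into three parts: the direct-sum decomposition, finite generation of $\Gt_0(\A^n/G)$, and the vanishing of the rational part of $\Ksg_0(\A^n/G)$. Throughout write $X = \A^n/G$ and let $q\colon \A^n \to X$ be the finite quotient morphism, with $B = k[\A^n]$ and $A = B^G$.

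First I would establish the decomposition. Since $\A^n$ is irreducible and $G$ is finite, $X$ is irreducible, so the generic rank defines a homomorphism $\rk\colon \Gt_0(X) \to \Z$ with $\rk[\OO_X] = 1$. By Lemma \ref{lem:contractible}(1) the scheme $X$ is $\A^1$-contractible, so Proposition \ref{prop:A1-Ksg}(2) identifies $\Ksg_0(X)$ with $\Gt_0(X)/(\Z\cdot[\OO_X])$. As $[\OO_X]$ has rank $1$ it is non-torsion, so the short exact sequence $0 \to \Z\cdot[\OO_X] \to \Gt_0(X) \to \Ksg_0(X) \to 0$ is split by $\rk$, giving $\Gt_0(X) = \Z \oplus \Ksg_0(X)$. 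Note that I use the rank splitting rather than Proposition \ref{prop:A1-Ksg}(3), so as not to assume a smooth rational point on $X$.

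Next I would prove finite generation by comparison with equivariant $\Gt$-theory. Because $q$ is finite (so $q_*$ is exact) and we work in characteristic zero (so that $(-)^G$ is exact via the averaging idempotent $\tfrac{1}{|G|}\sum_{g} g$), the assignment $[\EE]\mapsto [(q_*\EE)^G]$ defines a homomorphism $\Phi\colon \Gt_0^G(\A^n) \to \Gt_0(X)$. It is surjective: for a coherent sheaf $\FF$ on $X$ the underived pullback $q^*\FF$ is a $G$-equivariant coherent sheaf on $\A^n$, and in characteristic zero the decomposition $B = A \oplus C$ of $A[G]$-modules with $C^G = 0$ gives $(q_* q^*\FF)^G \cong \FF$, so $[\FF] = \Phi([q^*\FF])$. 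By equivariant homotopy invariance (Thomason) $\Gt_0^G(\A^n) \cong \Gt_0^G(\Spec k) = R(G)$, a finitely generated free abelian group, so $\Gt_0(X)$ and hence $\Ksg_0(X)$ are finitely generated.

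Finally I would show $\Ksg_0(X)$ is torsion, that is that $\Gt_0(X)$ has rank $1$. Rationally the topological filtration on $\Gt$-theory degenerates to Chow groups, $\Gt_0(X)\otimes\Q \cong \bigoplus_i \CH_{n-i}(X)\otimes\Q$, so it suffices to compute $\CH_*(X)_\Q$. Via the Edidin--Graham/Vistoli comparison of the rational Chow groups of the coarse space with the equivariant ones, $\CH_*(X)_\Q \cong \CH_*^G(\A^n)_\Q$, and equivariant homotopy invariance together with the fact that the higher equivariant Chow groups of a point are $|G|$-torsion yields $\CH_*^G(\A^n)_\Q \cong \CH_*^G(\Spec k)_\Q \cong \Q$, concentrated in top dimension. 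Hence $\Gt_0(X)_\Q = \Q\cdot[\OO_X]$ and $\Ksg_0(X)$ has rank $0$; combined with the finite generation above, $\Ksg_0(X)$ is finite. I expect this last step to be the main obstacle, as it is the only one that leaves the elementary realm, requiring both the rational $\Gt$-theory/Chow comparison and the computation of rational Chow groups of the quotient through equivariant intersection theory. An alternative to the equivariant Chow input would be to use that quotient singularities are rational and compare $\Gt_0(X)_\Q$ with that of a resolution, but the equivariant route is cleaner and fits the local analysis.
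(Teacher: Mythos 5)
Your proof is correct and follows essentially the same three-step strategy as the paper's: splitting off $\Z$ via $\A^1$-contractibility and the rank map, finite generation via surjectivity of the equivariant pushforward $\Gt_0^G(\A^n)\to\Gt_0(\A^n/G)$, and rank one via the Baum--Fulton--MacPherson Riemann--Roch comparison with rational Chow groups. The differences are only in how the sub-steps are justified: the paper invokes Proposition \ref{prop:A1-Ksg}(3) and the essential surjectivity of $\pi_*$ from Theorem \ref{thm:res-of-sing-ess-surjective}(1) where you give direct arguments (rank splitting, Reynolds operator), and it computes $\CH_*(\A^n/G)\otimes\Q\simeq\CH_*(\A^n)^G\otimes\Q\simeq\Q$ via \cite[Example 1.7.6]{fulton} rather than through equivariant Chow groups of $BG$.
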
 

\begin{proof}
By Proposition \ref{prop:A1-Ksg} (3) there is a split short exact sequence
\[
0 \to \Z \to \Gt_0(\A^n / G) \to \Ksg_0(\A^n / G) \to 0 
\]
where the first map is split by the rank map. 
Let us show that $\Ksg_0(\A^n / G)$ is finite torsion. 
The fact that $\Gt_0(\A^n / G)$ is finitely-generated is well-known \cite{auslander-reiten} and 
follows e.g. from the fact that the push-forward
functor from the equivariant category to the category of coherent sheaves on the quotient variety 
$\pi_*: \Db_G(\A^n) \to \Db(\A^n/G)$ is essentially surjective
by Theorem \ref{thm:res-of-sing-ess-surjective} (1).
Thus it suffices to show that $\Gt_0(\A^n / G)$ is of rank one.
For that we can work rationally and compare $\Gt_0$ to the Chow groups. We have a chain of isomorphisms
\[
\Gt_0(\A^n / G) \otimes \Q \simeq \CH_*(\A^n/G) \otimes \Q \simeq \CH_*(\A^n)^G \otimes \Q \simeq \Q,
\]
where the first isomorphism is the Grothendieck-Riemann-Roch Theorem for singular varieties \cite[Chapter III]{baum-fulton-macpherson}
and the second isomorphism is \cite[Example 1.7.6]{fulton}. We conclude that $\Gt_0(\A^n / G)$ is a finitely generated abelian group of rank one
and that $\Ksg_0(\A^n / G)$ is finite torsion.
\end{proof}

We call an element $g \in \GL_n(k)$
a reflection if $g$ has finite order and acts trivially on a hyperplane.
We need the following well-known Lemmas.

\begin{lemma}[{\cite[Theorem 3.9.2]{benson}}]\label{lemma:Cl-group}
Let $G$ be a finite subgroup of $\GL_n(k)$ 
and let $N$ be the subgroup of $G$ generated by reflections.
There is a natural isomorphism $\Cl(\A^n / G) \simeq \wh{G / N}$.
\end{lemma}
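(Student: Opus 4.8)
The plan is to split off the reflections via Chevalley--Shephard--Todd and then compute the remaining class group by étale descent. First I would note that $N$ is normal in $G$: a conjugate of a reflection is again a reflection, so the subgroup they generate is stable under conjugation. In characteristic zero the Chevalley--Shephard--Todd theorem then gives that $k[x_1,\dots,x_n]^N$ is again a polynomial ring, so that $Y := \A^n/N$ is isomorphic to $\A^n$; in particular $Y$ is smooth and $\Cl(Y) = 0$. Writing $H := G/N$, which acts on $Y$ with $Y/H = \A^n/G$, the task reduces to proving $\Cl(Y/H) \simeq \wh{H}$.

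The geometric input I would establish next is that $H$ acts on $Y$ \emph{freely in codimension one}. This rests on the classical fact that the inertia group of any height-one prime of $k[x_1,\dots,x_n]$ under the $G$-action is generated by the reflections it contains. Since by definition every reflection of $G$ lies in $N$, all such inertia groups are contained in $N$ and hence map to the identity in $H = G/N$; therefore no nontrivial element of $H$ fixes a divisor on $Y$. Let $U \subseteq Y$ be the open locus on which $H$ acts freely, so that $Y \setminus U$ has codimension at least two. Because deleting a closed subset of codimension $\geq 2$ changes neither the class group nor the global units, and because the free quotient $U/H$ is smooth with $U \to U/H$ an étale $H$-torsor, I obtain
\[
\Cl(Y/H) = \Cl(U/H) = \Pic(U/H), \qquad \Pic(U) = \Cl(U) = \Cl(Y) = 0, \qquad \OO(U)^{\times} = k^{*},
\]
the last unit group carrying the trivial $H$-action.

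It remains to run descent along the torsor $U \to U/H$. Line bundles on $U/H$ are the same as $H$-equivariant line bundles on $U$, so $\Pic(U/H) \simeq \Pic^{H}(U)$, and forgetting the equivariant structure gives the standard low-degree exact sequence
\[
0 \to H^{1}(H, \OO(U)^{\times}) \to \Pic^{H}(U) \to \Pic(U)^{H}.
\]
Substituting $\Pic(U) = 0$ and $\OO(U)^{\times} = k^{*}$ with trivial action yields
\[
\Cl(Y/H) \simeq \Pic^{H}(U) \simeq H^{1}(H, k^{*}) = \Hom(H, k^{*}) = \wh{G/N}.
\]
Every identification here is canonical, which gives the naturality asserted in the statement.

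The step I expect to be the main obstacle is the codimension-one freeness of the $H$-action, i.e. the purity statement that inertia groups of height-one primes are generated by reflections; this is what forces the branch locus of $Y \to Y/H$ into codimension $\geq 2$ and thereby feeds the descent computation. A secondary technical point is the smoothness of the free quotient $U/H$, needed to replace $\Cl(U/H)$ by $\Pic(U/H)$. If one prefers to avoid descent altogether, the same answer can be reached more concretely by sending a character $\chi \in \wh{H}$ to the divisor class of its module of semi-invariants $\{ a \in k[x_1,\dots,x_n]^{N} : \sigma \cdot a = \chi(\sigma)\, a \text{ for all } \sigma \in H \}$ and checking that this is a well-defined isomorphism onto $\Cl(Y/H)$.
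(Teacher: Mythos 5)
Your proof is correct, and it follows the same overall strategy as the paper's: pass to $\A^n/N\simeq\A^n$ (Chevalley--Shephard--Todd), observe that the residual action of $H=G/N$ has no divisorial fixed locus because every element of $G$ fixing a divisor pointwise is a reflection and hence lies in $N$, discard the codimension~$\ge 2$ non-free locus, and identify the class group of the quotient with an equivariant Picard group of the cover. The only real divergence is in how that last identification is carried out: the paper phrases it through equivariant Chow groups \`a la Edidin--Graham, writing $\Cl(\A^n/G)\simeq \CH_{n-1}^{G}(\A^n)=\Pic^{G}(\A^n)=\wh{G}$ in the reflection-free case, whereas you run descent along the \'etale $H$-torsor $U\to U/H$ and read off $\Pic^{H}(U)\simeq H^{1}(H,k^{*})=\Hom(H,k^{*})$ from the low-degree exact sequence. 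These amount to the same computation (an equivariant line bundle on a variety with trivial Picard group and units $k^{*}$ is exactly a character), so neither buys more generality here; your version is perhaps more self-contained since it avoids citing the equivariant intersection theory machinery, while the paper's version fits its systematic use of equivariant Chow groups elsewhere (e.g.\ in Lemma~\ref{lemma:CH-G-torsion}). Your justification of codimension-one freeness via inertia groups of height-one primes is sound, though for a linear action it follows even more directly from the fact that the non-free locus of $H$ on $\A^n/N$ is the image of $\bigcup_{g\in G\setminus N}\mathrm{Fix}(g)$, a union of linear subspaces none of which is a hyperplane.
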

\begin{proof}
Our proof relies on equivariant Chow groups \cite{edidingraham}. Let us first assume that $G$ does not contain reflections.
In this case there is a $G$-invariant subvariety $Z\subset\mathbb{A}^n$ of codimension at least two, such that $G$ acts freely on $\mathbb{A}^n \setminus Z$. 
Let $\pi: \A^n \to \A^n / G$ be the quotient map.
Since removing locus of codimension two does not change $(n-1)$-st Chow groups,
we have a chain of isomorphisms
\[
\Cl(\A^n / G) = \CH_{n-1}(\A^n / G) \simeq \CH_{n-1}(\A^n /G \setminus \pi(Z)) = \CH_{n-1}((\A^n \setminus Z) / G) \simeq \CH_{n-1}^G(\A^n).
\]
Since $\mathbb{A}^n$ is smooth, we have $\CH_{n-1}^G(\mathbb{A}^n) = \Pic^G(\mathbb{A}^n)$,
and the latter group of $G$-equivariant line bundles on $\mathbb{A}^n$ is isomorphic to the group $\wh{G}$ of characters of $G$.

In the general case the fixed locus of the action of $G/N$ on $\A^n / N \simeq \A^n$ does not contain
divisors and the same argument applies to show that $\Cl(\A^n / G) \simeq \wh{G/N}$.
\end{proof}

\begin{lemma}\label{lemma:CH-G-torsion}
For every $0 \le i \le n-1$, $\CH_i(\A^n / G)$ is annihilated by $|G|$. 
\end{lemma}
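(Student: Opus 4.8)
The plan is to combine the finite surjective quotient map $\pi\colon \A^n \to \A^n/G$ with the vanishing $\CH_i(\A^n) = 0$ for $0 \le i \le n-1$ (affine space carries no cycle classes below the top dimension). Since $\pi$ is proper it induces a pushforward $\pi_*\colon \CH_i(\A^n) \to \CH_i(\A^n/G)$, whose image is therefore zero in the relevant range. The strategy is to show that for every generator of $\CH_i(\A^n/G)$, a multiple dividing $|G|$ lies in this image, and hence vanishes.

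First I would reduce to proving $|G|\cdot[W] = 0$ for each irreducible closed $W \subseteq \A^n/G$ of dimension $i$, since such classes generate $\CH_i(\A^n/G)$. Fixing $W$, let $V_1,\dots,V_m$ be the $i$-dimensional irreducible components of $\pi^{-1}(W)$; these are precisely the components dominating $W$ (an $i$-dimensional component maps finitely onto a closed $i$-dimensional subset of the irreducible $i$-dimensional $W$, hence onto all of $W$), while the lower-dimensional components push forward to zero. Equivalently, the $V_j$ correspond to the primes of $k[x_1,\dots,x_n]$ lying over the prime of $k[x_1,\dots,x_n]^G$ that defines $W$. The crucial structural input is that $G$ permutes $V_1,\dots,V_m$ transitively, by the classical transitivity of the $G$-action on the primes over a fixed prime in an integral extension; writing $D$ for the decomposition group of $V_1$ this gives $m = [G:D]$, and the finite maps $\pi|_{V_j}\colon V_j \to W$ all share a common degree $f$.

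Next I would identify $f$ through inertia: the residue extension $k(V_1)/k(W)$ is normal, and in characteristic zero it is separable, hence Galois with group $D/I$ for the inertia group $I$, so that $f = [D:I]$. Therefore the cycle $\beta = \sum_{j=1}^{m}[V_j]$ on $\A^n$ satisfies, by the definition of proper pushforward of cycles, $\pi_*\beta = \big(\sum_j \deg(V_j/W)\big)[W] = mf\,[W] = [G:I]\,[W]$. Since the class of $\beta$ lies in $\CH_i(\A^n) = 0$, we conclude $[G:I]\,[W] = 0$, and multiplying by $|I|$ yields $|G|\,[W] = 0$, as desired.

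Everything but one step is formal: the vanishing of $\CH_i(\A^n)$, the functoriality of $\pi_*$, and the generation of $\CH_i$ by subvarieties. The step demanding care is the degree computation, namely that the total degree of $\pi^{-1}(W)\to W$ equals $[G:I]$ and in particular divides $|G|$. This is exactly the decomposition--inertia bookkeeping for the $G$-action on primes together with separability of residue extensions in characteristic zero; it is what upgrades the easy rational vanishing $\CH_i(\A^n/G)\otimes\Q = 0$ to annihilation by $|G|$ on the nose.
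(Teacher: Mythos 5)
Your proof is correct and follows essentially the same route as the paper's: push cycles forward along the finite quotient map $\pi$, use $\CH_i(\A^n)=0$ for $i<n$, and control the degree over the base via the transitive $G$-action on the components of the preimage together with decomposition--inertia (in the paper this appears as the statement that $A=k[x_1,\dots,x_n]\otimes_{k[\A^n/G]}k(V)$ has $A^G=k(V)$ and that the residue degrees divide $|G|$). The only cosmetic difference is that the paper pushes forward a single irreducible component, obtaining $[k(W):k(V)]\cdot[V]=0$, whereas you push forward the sum of all components and get $[G:I]\cdot[W]=0$; both immediately give annihilation by $|G|$.
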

\begin{proof}
Let $V \subset \A^n / G$ be a subvariety, let $\pi^{-1}(V)$ be the scheme theoretic preimage of $V$
under the quotient morphism $\pi: \A^n \to \A^n/G$, and let $W$ be a reduced irreducible component of $\pi^{-1}(V)$.
Let us show that the degree of the field extension $[k(W):k(V)]$ divides $|G|$.

Since we assume $k$ to be of characteristic zero, $G$
is a linearly reductive $k$-group scheme.
Thus according to \cite[Proof of Theorem 1.1, p. 28]{Mumford}, the $G$-invariant ring of 
$A := k[x_1,\ldots ,x_n]\otimes_{k[\A^n/G]} k(V)$ is just $k(V)$.
If $A_{red}$ is the quotient of $A$ by the nilradical, then 
\[
A_{red} = \prod_{i=1}^r k(W_i)
\]
where $W_1, \dots, W_r$ are all components of $\pi^{-1}(V)$. The action of $G$ on $A$ induces an action on $A_{red}$, 
and $A_{red}^G = A^G = k(V)$.
Since $G$ acts on the components $W_1, \dots, W_r$ transitively, 
the degrees $[k(W_i):k(V)]$ are equal to each other, and it is easy to see that they
divide $|G|$.

Since $\CH_i(\A^n) = 0$ in the considered range,
by definition of push-forward on Chow groups we get
\[
0 = \pi_*([W]) = [k(W):k(V)] \cdot [V] 
\]
so that $[V] \in \CH_i(\A^n/G)$ is $|G|$-torsion.
\end{proof}

\begin{proposition}\label{prop:AnG-c1}
There is a well-defined surjective first Chern class homomorphism
\[
c_1: \Ksg_0(\A^n / G) \to \wh{G/N}. 
\]
If $n = 2$ and $k$ is algebraically closed, then $c_1$ is an isomorphism.
\end{proposition}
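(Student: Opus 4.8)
The plan is to realize $c_1$ as the composition of the canonical projection onto the first graded piece of the topological filtration with the identifications already available in the excerpt. First I would observe that $\A^n/G$ is irreducible, so Proposition \ref{prop:K_0sg-filtr}(1) gives $\Gr^0 \Ksg_0(\A^n/G) = 0$, whence $\Ksg_0(\A^n/G) = F^1 \Ksg_0(\A^n/G)$; equivalently this follows from the description $\Ksg_0(\A^n/G) \simeq \Gt_0(\A^n/G)/(\Z \cdot [\OO_X])$ of Proposition \ref{prop:A1-Ksg}(2), since every class in the quotient has rank zero and is therefore supported in codimension at least one.

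Next I would define $c_1$ as the composite
\[
c_1: \Ksg_0(\A^n/G) = F^1 \Ksg_0(\A^n/G) \twoheadrightarrow \Gr^1 \Ksg_0(\A^n/G) \xrightarrow{\sim} \Cl(\A^n/G)/\Pic(\A^n/G) \xrightarrow{\sim} \wh{G/N},
\]
where the middle isomorphism is Proposition \ref{prop:K_0sg-filtr}(2), applicable since $\A^n/G$ is irreducible and normal, and the last is Lemma \ref{lemma:Cl-group}, using that $\Pic(\A^n/G) = 0$ because $k[x_1,\ldots,x_n]^G$ is a normal graded domain. Surjectivity of $c_1$ is then immediate: the projection onto $\Gr^1$ is surjective by construction and the remaining two maps are isomorphisms.

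Finally, for the case $n = 2$ with $k$ algebraically closed, I would show that the surjection $F^1 \Ksg_0 \to \Gr^1 \Ksg_0$ is in fact an isomorphism, i.e.\ that $F^2 \Ksg_0(\A^2/G) = 0$. This is exactly Proposition \ref{prop:K_0sg-filtr}(4): since $\dim(\A^2/G) = 2$ and $k$ is algebraically closed, $\Gr^2 \Ksg_0(\A^2/G) = 0$, and as $F^3 \Ksg_0 = 0$ this forces $F^2 \Ksg_0 = 0$. Alternatively one may cite Corollary \ref{cor:K_0^Sg-of-curves-and-surfaces}(2) directly, since $\A^2/G$ is a normal irreducible quasi-projective surface, giving $\Ksg_0(\A^2/G) \simeq \Cl(\A^2/G)/\Pic(\A^2/G)$, which the construction of $c_1$ identifies with $\wh{G/N}$.

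The construction and surjectivity are essentially formal given the filtration machinery; the only substantive input is the vanishing in top codimension. Thus the main obstacle, entirely discharged by Proposition \ref{prop:K_0sg-filtr}(4), is the use of the algebraically closed hypothesis to kill $\Gr^2$ via the Moving Lemma argument, which is precisely what fails over non-closed fields (cf.\ Example \ref{ex:x2-y2}) and is what confines the isomorphism statement to $n = 2$.
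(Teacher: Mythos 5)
Your proof is correct and follows essentially the same route as the paper: both hinge on the identification of $\Gr^1$ with the class group (Lemma \ref{lemma:Cl-group} together with the smooth-locus argument underlying Proposition \ref{prop:K_0sg-filtr}(2)) and on the vanishing $\Gr^2\Ksg_0 = F^2\Ksg_0 = 0$ from Proposition \ref{prop:K_0sg-filtr}(4) for the $n=2$ case. The only cosmetic difference is that the paper first builds a Grothendieck--Riemann--Roch-style surjection $(\rk,c_1):\Gt_0(\A^n/G)\twoheadrightarrow \Z\oplus\CH_{n-1}(\A^n/G)$ by restricting to the smooth locus and then splits off $\Z\cdot[\OO]$ via Proposition \ref{prop:G_0}, whereas you package the same content through the already-established filtration statements, at the cost of the (correct, and available from the Swan--Weibel trick) extra input $\Pic(\A^n/G)=0$.
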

\begin{proof}
We claim that there is a Grothendieck-Riemann-Roch without denominators style surjection
$$(\rk,c_1):\Gt_0(\mathbb{A}^n/G)\twoheadrightarrow\mathbb{Z}\oplus\CH_{n-1}(\mathbb{A}^n/G).$$
Indeed, since $\mathbb{A}^n/G$ is normal, 
to construct $c_1$ one may simply remove the singular locus of $\A^n / G$ and thus reduce to the smooth case,
and the surjectivity follows easily.

Splitting off the direct summand $\Z$ corresponding to the trivial bundles and using Proposition \ref{prop:G_0} and Lemma \ref{lemma:Cl-group}
we get the desired surjection. 

By construction of the topological filtration on $\Ksg_0(X)$ we have $\Ker(c_1) = F^2 \Ksg_0(X)$ (cf. proof of Proposition \ref{prop:K_0sg-filtr} (2)),
in particular if $n=2$, then $\Ker(c_1) = F^2 \Ksg_0(X) = 0$ by Proposition \ref{prop:K_0sg-filtr} (4).
\end{proof}

\begin{proposition}\label{prop:K_0^Sg-and-order}
Let $k$ be an algebraically closed field of characteristic zero. Then every element of $\Ksg_0(\A^n / G)$ is annihilated by $|G|^{n-1}$.
\end{proposition}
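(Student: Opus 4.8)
The plan is to reduce everything to the topological filtration on $\Ksg_0(\A^n/G)$ studied in Proposition \ref{prop:K_0sg-filtr} together with the torsion bound on Chow groups of the quotient from Lemma \ref{lemma:CH-G-torsion}. First I would note that $\A^n/G$ is an irreducible, reduced, normal quasi-projective variety of pure dimension $n$, so Proposition \ref{prop:K_0sg-filtr} applies to it. Part (1) of that Proposition gives $\Gr^0 \Ksg_0(\A^n/G) = \Z^{N-1} = 0$, since $N = 1$, so that $F^0 \Ksg_0(\A^n/G) = F^1 \Ksg_0(\A^n/G) = \Ksg_0(\A^n/G)$; part (4) gives $\Gr^n \Ksg_0(\A^n/G) = F^n \Ksg_0(\A^n/G) = 0$, using that $k$ is algebraically closed. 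Hence the topological filtration collapses to
\[
\Ksg_0(\A^n/G) = F^1 \Ksg_0(\A^n/G) \supseteq F^2 \Ksg_0(\A^n/G) \supseteq \dots \supseteq F^{n-1} \Ksg_0(\A^n/G) \supseteq F^n \Ksg_0(\A^n/G) = 0,
\]
with at most $n-1$ potentially nonzero successive quotients $\Gr^1 \Ksg_0(\A^n/G), \dots, \Gr^{n-1} \Ksg_0(\A^n/G)$.

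Next I would bound each graded piece. By Proposition \ref{prop:K_0sg-filtr} (3) there is a surjective homomorphism $\CH_{n-i}(\A^n/G) \to \Gr^i \Ksg_0(\A^n/G)$. For $1 \le i \le n-1$ the index $n-i$ lies in the range $\{1, \dots, n-1\}$, so Lemma \ref{lemma:CH-G-torsion} shows that $\CH_{n-i}(\A^n/G)$ is annihilated by $|G|$, and therefore so is its quotient $\Gr^i \Ksg_0(\A^n/G)$.

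Finally I would run the standard dévissage along the filtration: if $F^{i+1} \Ksg_0(\A^n/G)$ is annihilated by $|G|^{\,n-1-i}$ and $F^i \Ksg_0(\A^n/G)/F^{i+1} \Ksg_0(\A^n/G) = \Gr^i \Ksg_0(\A^n/G)$ is annihilated by $|G|$, then $F^i \Ksg_0(\A^n/G)$ is annihilated by $|G|^{\,n-i}$. Starting from $F^{n-1} \Ksg_0(\A^n/G) = \Gr^{n-1} \Ksg_0(\A^n/G)$, which is annihilated by $|G|$, and descending to $F^1 \Ksg_0(\A^n/G) = \Ksg_0(\A^n/G)$ through these $n-1$ steps, I conclude that $\Ksg_0(\A^n/G)$ is annihilated by $|G|^{n-1}$.

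There is no genuinely hard analytic step here; the argument is essentially a bookkeeping combination of results already established. The one point that must be handled with care is the exact effective length of the filtration: it is the simultaneous vanishing of both the top piece $\Gr^n \Ksg_0(\A^n/G)$ (Proposition \ref{prop:K_0sg-filtr} (4), where algebraic closedness of $k$ enters) and the bottom piece $\Gr^0 \Ksg_0(\A^n/G)$ (irreducibility) that trims the filtration to exactly $n-1$ steps and thereby yields the exponent $n-1$ rather than $n$ or $n+1$.
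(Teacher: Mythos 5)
Your argument is correct and is essentially the same as the paper's proof: both use the topological filtration on $\Ksg_0(\A^n/G)$, the vanishing of $\Gr^0$ (irreducibility) and $\Gr^n$ (algebraic closedness) from Proposition \ref{prop:K_0sg-filtr}, the surjection from $\CH_{n-i}(\A^n/G)$ onto each remaining graded piece, and the $|G|$-torsion bound of Lemma \ref{lemma:CH-G-torsion}, followed by d\'evissage along the $n-1$ steps of the filtration. No discrepancies to report.
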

\begin{proof}
We consider the topological filtration on $\Ksg_0(\A^n/G)$ and its associated graded pieces $\Gr^i \Ksg_0(\mathbb{A}^n/G)$.
By Proposition \ref{prop:K_0sg-filtr} we have $\Gr^0  \Ksg_0(\mathbb{A}^n/G) = \Gr^n \Ksg_0(\mathbb{A}^n/G) = 0$ so that the filtration has the form
\[
0 = F^n \Ksg_0(\mathbb{A}^n/G)\subset F^{n-1} \Ksg_0(\mathbb{A}^n/G)\subset\ldots \subset F^1 \Ksg_0(\mathbb{A}^n/G) = \Ksg_0(\A^n / G).
\]

By Proposition \ref{prop:K_0sg-filtr}, each subquotient $\Gr^i \Ksg_0(\A^n/G)$, $1 \le i \le n-1$, admits a surjection $\CH_{n-i}(\A^n/G) \to \Gr^i \Ksg_0(\A^n/G)$
and by Lemma \ref{lemma:CH-G-torsion}, each of these groups is annihilated by $|G|$.
This means that multiplication by $|G|$ shifts the filtration: $|G| \cdot F^i \Gt_0(\mathbb{A}^n/G) \subset F^{i+1} \Gt_0(\mathbb{A}^n/G)$, in particular multiplication by $|G|^{n-1}$
acts as the zero map.
\end{proof}





The next proposition gives the formula for $\Ksg_0(\A^n / G)$ in the isolated singularity case.
For other approaches to how to compute this group see \cite{auslander-reiten}, 
\cite{marcos}, \cite{herzog-marcos-waldi}.

\begin{proposition}[\cite{gsprverdier}]\label{prop:gsprverdier} 
Let $G$ be a finite group acting linearly on $\mathbb{A}^n$ such that the 
$G$-action on $\A^n \setminus \{ 0 \}$ is free.
Let $\rho$ be the corresponding representation of $G$.
Then we have
$$\Gt_0(\mathbb{A}^n/G)\simeq R(G)/rR(G),$$
where $R(G)$ is the representation ring of $G$ and $r \in R(G)$ is the Koszul class
\[
r = \sum_{i = 0}^n (-1)^i [\Lambda^i (\rho^{\vee})].
\]
\end{proposition}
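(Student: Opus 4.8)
The plan is to compute $\Gt_0(\A^n/G)$ by relating it to the $G$-equivariant $\Gt$-theory of $\A^n$ through localization, and then gluing back the singular point. Throughout write $Y = \A^n/G$, let $\pi\colon \A^n \to Y$ be the quotient map, let $y_0 = \pi(0)$ be the unique singular point, and set $U = \A^n \setminus \{0\}$, on which $G$ acts freely by hypothesis. I would first record two standard facts about equivariant $\Gt$-theory. Homotopy invariance identifies $\Gt_0^G(\A^n)$ with $\Gt_0^G(\Spec k) = R(G)$, the isomorphism being restriction $i^*$ to the origin $i\colon \{0\} \hookrightarrow \A^n$ (inverse to flat pullback along $\A^n \to \{0\}$). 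Secondly, since $G$ acts freely on $U$, equivariant descent gives $\Gt_0^G(U) \simeq \Gt_0(U/G) = \Gt_0(Y \setminus \{y_0\})$.

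The heart of the argument is the $G$-equivariant localization sequence for $\{0\} \hookrightarrow \A^n \hookleftarrow U$, which, as $\Gt$-theory is connective, is right exact:
\[
\Gt_0^G(\{0\}) \xrightarrow{i_*} \Gt_0^G(\A^n) \to \Gt_0^G(U) \to 0.
\]
Under the identifications above the pushforward $i_*$ becomes multiplication by the Koszul class $r$. Indeed, the normal bundle of the origin is the representation $\rho$, so the self-intersection formula gives $i^* i_* = \sum_i (-1)^i [\Lambda^i \rho^{\vee}] = r$; concretely, $i_*[V]$ is represented by the $G$-equivariant Koszul resolution $V \otimes \Lambda^{\bullet} \rho^{\vee} \otimes \OO_{\A^n}$ of $V \otimes \OO_0$, whose class restricts under $i^*$ to $[V]\cdot r$. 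Taking cokernels then yields $\Gt_0(Y \setminus \{y_0\}) \simeq R(G)/rR(G)$.

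It remains to add back the point $y_0$. The $\Gt$-theory localization sequence on $Y$ for the closed point $y_0$ produces a surjection $\Gt_0(Y) \twoheadrightarrow \Gt_0(Y \setminus \{y_0\}) = R(G)/rR(G)$ whose kernel is the image of $\Gt_0(\{y_0\}) = \Z$, i.e.\ the cyclic group generated by $[\OO_{y_0}]$. I would finish by showing this class vanishes in $\Gt_0(Y)$: by Proposition \ref{prop:G_0} one has $\Gt_0(Y) = \Z \oplus \Ksg_0(Y)$ with the $\Z$ split off by the rank, and $[\OO_{y_0}]$ has rank zero, so it lies in $\Ksg_0(Y)$; by Corollary \ref{cor:structure-sheaves-vanish} the image in $\Ksg_0(Y)$ of the class of a closed point is zero (this step uses that $k$ is algebraically closed). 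Hence $[\OO_{y_0}] = 0$ in $\Gt_0(Y)$, the surjection is an isomorphism, and $\Gt_0(\A^n/G) \simeq R(G)/rR(G)$.

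The two self-contained computations above, the free-quotient identification of $\Gt_0^G(U)$ and the self-intersection computation $i_* = (\cdot\, r)$, are standard, so I expect the genuine obstacle to be the final passage from $Y \setminus \{y_0\}$ back to $Y$: a priori the point class $[\OO_{y_0}]$ could contribute a nonzero (torsion) summand, and its vanishing is exactly what the earlier topological-filtration results (Proposition \ref{prop:K_0sg-filtr} via Corollary \ref{cor:structure-sheaves-vanish}, together with the rank splitting of Proposition \ref{prop:G_0}) are needed to guarantee. A secondary technical point to verify carefully is that the equivariant localization sequence and the free-descent isomorphism are compatible, so that the cokernel in the middle step is identified with $\Gt_0(Y \setminus \{y_0\})$ as claimed.
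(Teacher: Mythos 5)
Your argument is correct, and its main engine --- equivariant homotopy invariance identifying $\Kt_0^G(\A^n)\simeq R(G)$, the identification of $i_*^G$ with multiplication by the Koszul class $r$, the right-exact equivariant localization sequence, and free descent $\Kt_0^G(\A^n\setminus\{0\})\simeq \Gt_0((\A^n\setminus\{0\})/G)$ --- is exactly the paper's. The one place you genuinely diverge is the step you correctly identify as the crux: showing that restricting from $\A^n/G$ to $\A^n/G\setminus\{y_0\}$ loses nothing. You kill $[\OO_{y_0}]$ by combining the rank splitting of Proposition \ref{prop:G_0} with Corollary \ref{cor:structure-sheaves-vanish}, i.e.\ ultimately the moving-lemma argument via de Jong alterations, which forces you to assume $k$ algebraically closed --- a hypothesis the proposition does not impose. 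The paper instead runs the two localization sequences (equivariant on $\A^n$, ordinary on $\A^n/G$) as a commutative ladder connected by pushforward, and observes that $\ol{i}_*\colon \Gt_0(\{y_0\})\to \Gt_0(\A^n/G)$ is zero simply because $\ol{i}=\pi\circ i$ and $i_*\colon \Gt_0(\{0\})\to\Gt_0(\A^n)=\Z$ already vanishes (the Koszul resolution of $\OO_0$ on $\A^n$ has alternating sum zero). That observation is elementary, works over any field of characteristic zero, and makes the "compatibility" you flag as a secondary worry into the actual mechanism of the proof. So: your proof is valid where $k=\ol{k}$, but you should replace the appeal to Corollary \ref{cor:structure-sheaves-vanish} by the factorization $\ol{i}_*=\pi_*\circ i_*=0$ to recover the statement in its stated generality and to avoid importing the much heavier filtration machinery.
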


\begin{proof} The proof uses equivariant algebraic $\Kt$-theory \cite{thomason-equivariant-I}. 
Since $G$ acts freely away from $0$, there is an isomorphism
$$\Kt_0^G(\mathbb{A}^n \setminus \{0\})\simeq \Gt_0\left(\frac{\mathbb{A}^n \setminus \{0\}}{G}\right).$$
Let $i: \{0\} \to \A^n$, $\ol{i}: \{0\} \to \A^n/G$  be the closed embeddings.
Consider the localization exact sequences of $\Gt_0$ and $\Kt_0^G$:
\begin{equation}\label{eq:K-equiv-seq}
\xymatrix{
\Kt_0^G(0)\ar@{->}[r]^{i_*^G} \ar[d] & \Kt_0^G(\mathbb{A}^n) \ar@{->}[r] \ar[d] & \Kt_0^G(\mathbb{A}^n \setminus \{0\}) \ar@{->}[r] \ar@{->}[d]^{\simeq}  & 0 \\
\Gt_0( 0 ) \ar@{->}[r]^{\ol{i}_*}  & \Gt_0(\mathbb{A}^n/G) \ar@{->}[r] &  \Gt_0(\mathbb{A}^n/G -0)\ar@{->}[r] & 0\\
}\end{equation}
related by push-forward maps followed by taking $G$-invariants.
The push-forward $\ol{i}_*$ is a zero map since it factor through $i_* = 0$.
By equivariant homotopy invariance \cite[4.1]{thomason-equivariant-I} we have an isomorphism $\Kt_0^G(\A^n / G) \simeq \Kt_0^G(0) \simeq R(G)$ and under these identifications
the push-forward $i_*^G: \Kt_0^G(0)\to \Kt_0^G(\mathbb{A}^n)$ corresponds to the multiplication by the class $[\OO_0] = r$.

Putting everything together we obtain
\[
\Gt_0(\mathbb{A}^n/G) \simeq \Gt_0((\A^n \setminus \{0\})/G) \simeq R(G)/rR(G). 
\]
\end{proof}

\begin{remark}\label{rem:isolated-quot}
If $G$ has no reflections, then the condition that $G$ acts freely on $\A^n \setminus \{0\}$
is equivalent to the quotient $\A^n/G$ to have an isolated singularity at the origin.
\end{remark}

\begin{remark}\label{remark:gspverdier} 
%
%
Since $\Ksg_0(\A^n/G)$ is a finite group by
Proposition \ref{prop:G_0}
we see that under the assumptions of Proposition \ref{prop:gsprverdier} the linear map 
$r: R(G) \to R(G)$ has cokernel of rank one, and
so it has a one-dimensional kernel.

\end{remark}

\begin{example}[\cite{gsprverdier}]
Computing $R(G)/rR(G)$ for a two-dimensional $ADE$ singularity $\A^2/G$, one can compute $\Ksg_0(\A^2/G)$ using Proposition \ref{prop:gsprverdier} as follows:

\medskip
\begin{center}   
 
    \begin{tabular}{ | c  | c  |  }
    \hline
    Type & $\Ksg_0(\A^{2}/G)$ \\ \hline
    \hline
     $A_n$ & $\mathbb{Z}/(n+1)\mathbb{Z}$ \\ \hline
     $D_n$, $n$ even & $\mathbb{Z}/2\mathbb{Z}\times\mathbb{Z}/2\mathbb{Z}$  \\ \hline
  $D_n$, $n$ odd &  $\mathbb{Z}/4\mathbb{Z}$  \\ \hline
         $E_6$ & $\mathbb{Z}/3\mathbb{Z}$ \\
    \hline
         $E_7$ & $\mathbb{Z}/2\mathbb{Z}$  \\
    \hline
         $E_8$ & $0$  \\
    \hline
    \end{tabular}
\end{center}
\medskip
(note a typo in \cite{gsprverdier} in the $E_7$ case on page 415). The same result can be obtained using Proposition \ref{prop:AnG-c1},
and another way is given by Yoshino using Auslander-Reiten sequences \cite[Chapter 13]{Yoshino}.
\end{example}


\begin{corollary}
Let $k$ be an algebraically closed field of characteristic zero and let
$X$ be the local $\frac1{m}(\overbrace{1, \dots, 1}^n)$ singularity, that is $X = \A^n / \Z_m$ with the diagonal action by the primitive root of unity. 
Then $\Ksg_0(X)$ is a finite abelian group of order $m^{n-1}$.
\end{corollary}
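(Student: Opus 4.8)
The plan is to apply Proposition~\ref{prop:gsprverdier} to reduce the computation to a determinant in the representation ring $R(G)$ of $G = \Z_m$. First I would verify the hypotheses of that Proposition: writing $\zeta$ for the chosen primitive $m$-th root of unity, a nontrivial element $\zeta^j \in G$ acts on $\A^n$ by $v \mapsto \zeta^j v$, which fixes only the origin, so $G$ acts freely on $\A^n \setminus \{0\}$ and contains no reflections; by Remark~\ref{rem:isolated-quot} the singularity at the origin is isolated. Let $\chi$ be the character sending the generator of $G$ to $\zeta$, so that $R(G) = \Z[t]/(t^m - 1)$ with $t = [\chi]$. The defining representation is $\rho = \chi^{\oplus n}$, hence $\rho^\vee = (\chi^{-1})^{\oplus n}$ and $[\Lambda^i(\rho^\vee)] = \binom{n}{i} t^{-i}$ in $R(G)$. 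The Koszul class is therefore
\[
r = \sum_{i=0}^n (-1)^i \binom{n}{i} t^{-i} = (1 - t^{-1})^n,
\]
and as $t$ is a unit we may clear denominators to get $r\,R(G) = (t-1)^n R(G)$.

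By Proposition~\ref{prop:gsprverdier} this gives $\Gt_0(X) \simeq A$, where
\[
A := \Z[t]/\big(t^m - 1,\ (t-1)^n\big).
\]
Proposition~\ref{prop:G_0} tells us that $\Gt_0(X) = \Z \oplus \Ksg_0(X)$ with $\Ksg_0(X)$ finite; hence $\Ksg_0(X)$ is exactly the torsion subgroup of $A$, and it remains to compute $|\mathrm{tors}(A)|$.

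For this I would substitute $s = t - 1$, so that $A = \Z[s]/\big(s^n,\ (1+s)^m - 1\big)$. As a $\Z$-module $\Z[s]/(s^n)$ is free on $1, s, \dots, s^{n-1}$, and modulo $s^n$ the second relation becomes $g := (1+s)^m - 1 = \sum_{k=1}^{n-1} \binom{m}{k} s^k$, which lies in the submodule $M = \bigoplus_{i=1}^{n-1} \Z\, s^i$. Thus $\Z \cdot 1$ is a complement to the ideal $(g)$, giving $A = \Z \cdot 1 \oplus \big(M/(g)\big)$, so that the free part is $\Z \cdot 1$ and $\mathrm{tors}(A) = M/(g)$. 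As a $\Z$-module $(g) \subset M$ is generated by $g, sg, \dots, s^{n-2}g$ (since $s^{n-1} g = 0$), and because $s^j g = m\, s^{j+1} + (\text{higher order in } s)$, the matrix expressing these $n-1$ elements in the basis $s, s^2, \dots, s^{n-1}$ of $M$ is upper-triangular with every diagonal entry equal to $m$. Its determinant is $m^{n-1}$, so $(g)$ has index $m^{n-1}$ in $M$, and therefore $|\Ksg_0(X)| = |M/(g)| = m^{n-1}$.

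The main, and indeed essentially the only, obstacle is this last determinant computation; everything preceding it is a routine unwinding of Proposition~\ref{prop:gsprverdier}. I would also note that the result is consistent with Proposition~\ref{prop:K_0^Sg-and-order}, which guarantees that $\Ksg_0(X)$ is annihilated by $|G|^{n-1} = m^{n-1}$: here the group has order exactly $m^{n-1}$. Its finer group structure is more delicate---it depends on the arithmetic of the binomial coefficients $\binom{m}{k}$ and the group need not be cyclic---but this is not needed for the statement.
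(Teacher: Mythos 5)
Your proposal is correct and follows essentially the same route as the paper: apply Proposition \ref{prop:gsprverdier} to get $\Gt_0(X)\simeq R(\Z_m)/(1-x)^n$ with $x$ the class of the dual primitive character, substitute to reduce to $\Z[y]/(y^n,\ my-\binom{m}{2}y^2+\dots)$, and read off the order from the upper-triangular relations with diagonal entries $m$. The only cosmetic difference is that you take $t=[\chi]$ rather than $[\chi^\vee]$ and hence must clear the unit $t^{-1}$, and you spell out the determinant/index computation that the paper leaves implicit; both are fine.
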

\begin{proof}
Let us fix a primitive character $\rho$ of $\Z_m$.
Then the representation ring is $R(\Z_m)=\mathbb{Z}[x]/(x^m-1)$ where 
we choose $x$ to be the class $[\rho^\vee]$.
Then $$r=\sum_{i=0}^n (-1)^i[\Lambda^i (\rho^{\vee})]=\sum_{i=0}^n \binom{n}{i}(-1)^ix^i =(1-x)^n\in R(G)$$ 
and after making a substitution $y=1-x$, we obtain
\[
\Gt_0(X) = \Z[y] / (y^n, my - \binom{m}{2}y^2 + \dots) = \Z \cdot 1 \oplus \Ksg_0(X)
\]
so that $\Ksg_0(X)$ is a quotient of a free $\Z$-module with the basis $y, y^2, \dots, y^{n-1}$ by the upper-triangular relations $my^i - \binom{m}{2}y^{i+1} + \dots$ for $i \ge 1$.
This means that $\Ksg_0(X)$ is a finite abelian group of order $m^{n-1}$.
\end{proof}

We abuse the notation slightly by writing $X = \frac1{m}({1, \dots, 1})$ for the corresponding local singularity.
The precise structure of $\Ksg_0(\frac1{m}(\overbrace{1, \dots, 1}^n))$ will vary depending on $m$ and $n$.

\begin{example}
For $n=2$ we have $\Ksg_0(\frac1m(1,1)) \simeq \Z_m$, in accordance with Proposition \ref{prop:K_0^Sg-and-order}.
\end{example}

\begin{example}
For $n=3$ one can see that
\[
\Ksg_0(\tfrac1{m}(1,1,1)) = \left\{\begin{array}{ll}
                       (\Z_m)^2, & \text{$m$ odd} \\
                       \Z_{m/2} \times \Z_{2m}, & \text{$m$ even} \\
                     \end{array}\right. 
\]
\end{example}

\begin{example}
If $m = 2$ and $n$ is arbitrary, one can see that $\Ksg_0(\A^n / \Z_2) \simeq \Z_{2^{n-1}}$
(here the action of $\Z_2$ on $\A^n$ is $v \mapsto -v$).
\end{example}

\subsection{The local case: positive $\Kt$-groups}


\begin{proposition}\label{prop:K_j^Sg} 
Let $k$ be an algebraically closed field of characteristic zero, and
let $G \subset \GL_n(k)$ be a finite group such that the $G$-action on $\A^n \setminus \{ 0 \}$ is free.
For every $j \ge 0$ consider the group $T_j = \Tor(\Ksg_0(\A^n/G), \Kt_j(k))$.

(1) $T_j$ is a finite torsion group annihilated by $|G|^{n-1}$, and
$T_j = 0$ for all even $j$.

(2) For every $j \ge 1$ there is a short exact sequence
\begin{equation}\label{eq:G_j-AnG}
0 \to \Kt_j(k) \to \Gt_j(\A^n/G) \to T_{j-1} \to 0,
\end{equation}
where the first map is the pull-back from $\Spec(k)$.
In particular for all $j \ge 1$ we have 
$\Gt_j(\A^n/G)\otimes \Z[1/|G|] \simeq 
\Kt_j(k) \otimes \Z[1/|G|]$
and for all odd $j \ge 1$ we have
$\Gt_j(\A^n / G) \simeq \Kt_j(k)$.

(3) For every $j \ge 1$, there is an exact sequence
\begin{equation}\label{eq:K_jsg-AnG}
0 \to T_{j-1} \to \Ksg_j(\A^n/G) \to \Kt_{j-1}(\A^n/G) \to \Kt_{j-1}(k) \to 0,
\end{equation}
where the last morphism in the sequence is induced by restriction 
to the rational point $0 \in \A^n/G$.
\end{proposition}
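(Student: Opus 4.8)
The plan is to treat part (2) as the main statement, to deduce part (3) from it via Proposition \ref{prop:A1-Ksg}, and to read off part (1) from the known size of $\Ksg_0(\A^n/G)$ together with the torsion in $\Kt_j(k)$. For part (3), note that $\A^n/G$ is $\A^1$-contractible by Lemma \ref{lem:contractible}(1) and, since $k$ is algebraically closed and the singularity is isolated, admits a smooth rational point; hence Proposition \ref{prop:A1-Ksg}(3) gives for every $j\ge1$ a short exact sequence
\[
0\to\Coker\big(\Kt_j(k)\xrightarrow{p^*}\Gt_j(\A^n/G)\big)\to\Ksg_j(\A^n/G)\to\Ker\big(\Kt_{j-1}(\A^n/G)\xrightarrow{x_0^*}\Kt_{j-1}(k)\big)\to0.
\]
Granting part (2), the left-hand cokernel is exactly $T_{j-1}$; splicing the kernel on the right with $0\to\Ker(x_0^*)\to\Kt_{j-1}(\A^n/G)\xrightarrow{x_0^*}\Kt_{j-1}(k)\to0$, whose last map is surjective because it is split by $p^*$, yields part (3).

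Part (1) is then essentially formal. By Proposition \ref{prop:G_0} and Proposition \ref{prop:K_0^Sg-and-order} the group $\Ksg_0(\A^n/G)$ is finite and annihilated by $|G|^{n-1}$; since multiplication by $|G|^{n-1}$ is zero on $\Ksg_0(\A^n/G)$ it is zero on $T_j=\Tor(\Ksg_0(\A^n/G),\Kt_j(k))$, so $T_j$ is annihilated by $|G|^{n-1}$. By Suslin's computation of the $\Kt$-theory of an algebraically closed field of characteristic zero, $\Kt_j(k)$ is torsion free for even $j$, whence $T_j=0$ for even $j$, and has torsion subgroup isomorphic to $\Q/\Z$ for odd $j$, whose $m$-torsion is finite for every $m$; this forces $T_j$ to be finite.

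The substance is part (2), which I would obtain by running, in every degree, the two localization sequences from the proof of Proposition \ref{prop:gsprverdier}. Equivariant homotopy invariance and semisimplicity of the category of $G$-representations give $\Kt_j^G(\A^n)\simeq\Kt_j^G(0)\simeq R(G)\otimes_\Z\Kt_j(k)$, with the equivariant pushforward $i_*^G$ equal to multiplication by the Koszul class $r$; hence $\Gt_j(U)=\Kt_j(U)$, where $U=(\A^n\setminus\{0\})/G$, is an extension of the kernel of multiplication by $r$ on $R(G)\otimes_\Z\Kt_{j-1}(k)$ by its cokernel on $R(G)\otimes_\Z\Kt_j(k)$. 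In the second sequence the pushforward $\ol i_*$ from the origin vanishes, as in the proof of Proposition \ref{prop:gsprverdier}, so $\Gt_j(\A^n/G)=\Ker(\Gt_j(U)\to\Gt_{j-1}(0))$; comparing the two sequences by pushforward-and-invariants identifies this boundary with the invariants homomorphism $\mathrm{ev}\otimes\id$, $[V]\mapsto\dim V^G$, applied after the equivariant boundary.

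The decisive computation is the identification of $\Ker(r\colon R(G)\to R(G))$. The complex $R(G)\xrightarrow{r}R(G)$ computes $\Gt_0(\A^n/G)$ in degree $0$ and has a rank-one kernel (Remark \ref{remark:gspverdier}); writing $r$ as the class function $g\mapsto\det(1-\rho^\vee(g))$ and using that the action is free on $\A^n\setminus\{0\}$, so $r(g)\ne0$ for $g\ne e$ while $r(e)=0$, one sees this kernel is generated by the regular representation, on which $\mathrm{ev}$ restricts to an isomorphism onto $\Z$. Therefore re-inserting the origin cancels exactly this kernel, and a diagram chase produces the universal-coefficient sequence
\[
0\to\Gt_0(\A^n/G)\otimes_\Z\Kt_j(k)\to\Gt_j(\A^n/G)\to\Tor\big(\Gt_0(\A^n/G),\Kt_{j-1}(k)\big)\to0.
\]
Substituting $\Gt_0(\A^n/G)=\Z\oplus\Ksg_0(\A^n/G)$ from Proposition \ref{prop:G_0} turns the $\Tor$ term into $T_{j-1}$, and since $\Kt_j(k)$ is divisible for $j\ge1$ the finite summand $\Ksg_0(\A^n/G)$ contributes nothing, leaving $\Gt_0(\A^n/G)\otimes_\Z\Kt_j(k)=\Kt_j(k)$, whose image is that of $p^*$ (the trivial representation maps to $\OO_{\A^n/G}$ under the pushforward); this is part (2). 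The stated consequences follow by inverting $|G|$, which kills $T_{j-1}$, and by observing that $T_{j-1}=0$ for $j$ odd, where $j-1$ is even. The hard part is precisely the comparison of the two localization sequences and the verification that the regular-representation kernel is annihilated when the origin is glued back in; the remaining manipulations with $\Tor$ and divisibility are routine.
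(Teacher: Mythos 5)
Your proof is correct and follows essentially the same route as the paper's: comparison of the equivariant and non-equivariant localization sequences via pushforward, the universal coefficient sequence for the complex $[r\colon R(G)\to R(G)]$ to extract $\Ker(r)\otimes \Kt_{j-1}(k)$ and the $\Tor$ term, and Proposition \ref{prop:A1-Ksg}(3) to deduce part (3) from part (2). The only real difference is that you identify $\Ker(r)$ explicitly as $\Z$ times the regular representation using the class function $g\mapsto\det(1-\rho^\vee(g))$ and freeness of the action away from the origin, whereas the paper's Lemma \ref{lem:r_j}(2) only establishes $\Ker(r)=\Z\cdot t$ with $\pi_{0,*}(t)=\pm1$ by a diagram argument; your version is more explicit and equally valid.
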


We prove the Proposition
at the end of this subsection.

\begin{corollary}\label{K_1^Sg=0} 
If $\mathbb{A}^n/G$ is an isolated singularity over an algebraically closed field of characteristic zero
then $\Ksg_1(\mathbb{A}^n/G)=0$.
\end{corollary}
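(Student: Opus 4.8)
The plan is to obtain the vanishing directly from the four-term exact sequence \eqref{eq:K_jsg-AnG} of Proposition \ref{prop:K_j^Sg} (3), specialized to $j = 1$. Before applying that Proposition I would arrange that $G$ acts freely on $\A^n \setminus \{0\}$, which is precisely the hypothesis under which it is stated: by Remark \ref{rem:isolated-quot}, after replacing $G$ by its quotient by the normal subgroup generated by reflections (an operation that does not alter the variety $\A^n/G$), the assumption that the singularity is isolated becomes equivalent to freeness of the action away from the origin.

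With freeness granted, Proposition \ref{prop:K_j^Sg} (3) at $j = 1$ supplies the exact sequence
\[
0 \to T_0 \to \Ksg_1(\A^n/G) \to \Kt_0(\A^n/G) \to \Kt_0(k) \to 0,
\]
whose final arrow is restriction to the rational point $0 \in \A^n/G$. I would then eliminate the two outer terms. The left one, $T_0 = \Tor(\Ksg_0(\A^n/G), \Kt_0(k))$, vanishes because $\Kt_0(k) = \Z$ is torsion-free; this is also the even-degree case of Proposition \ref{prop:K_j^Sg} (1). Hence $\Ksg_1(\A^n/G)$ is identified with the kernel of the restriction map.

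For the right-hand term I would argue that this restriction map is injective. Exactness already shows it is surjective, and Proposition \ref{prop:K_-1} gives $\Kt_0(\A^n/G) \simeq \Z$ for an isolated singularity; since $\Kt_0(k) = \Z$ as well, a surjective homomorphism $\Z \to \Z$ must be an isomorphism, so its kernel is trivial. Combined with $T_0 = 0$, the sequence collapses and forces $\Ksg_1(\A^n/G) = 0$.

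The homological part of this argument is essentially automatic once the inputs are in place; the only point requiring genuine care is the preliminary reduction, namely checking that passing to the reflection-free quotient both preserves $\A^n/G$ and lands one in the free-action regime where Proposition \ref{prop:K_j^Sg} is valid. After that, the conclusion is forced by the coincidence that $\Kt_0(\A^n/G)$ and $\Kt_0(k)$ are both $\Z$, matched by the restriction map, leaving no room for a nonzero $\Ksg_1$.
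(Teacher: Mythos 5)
Your proposal is correct and follows essentially the same route as the paper's own proof: reduce to the free-action case via Remark \ref{rem:isolated-quot} (and the reflection-subgroup trick), then apply the $j=1$ case of the exact sequence \eqref{eq:K_jsg-AnG} together with $T_0 = 0$ and the identification $\Kt_0(\A^n/G) = \Kt_0(k) = \Z$ from Proposition \ref{prop:K_-1}. The only difference is that you spell out why the surjection $\Z \to \Z$ is an isomorphism, which the paper leaves implicit.
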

\begin{proof} 
We may assume that $G$ acts freely on $\A^n \setminus \{0\}$ (see Remark \ref{rem:isolated-quot}
and proof of Proposition \ref{prop:AnG-c1}).
The result follows from \eqref{eq:K_jsg-AnG} using the fact that 
$T_0 = 0$ and Proposition \ref{prop:K_-1} which says that $\Kt_0(\A^n/G) = \Kt_0(k) = \Z$.
\end{proof}

\begin{remark}
For non-algebraically closed field, see Example \ref{ex:A1-any-field}.
We do not know if $\Ksg_1(\A^n/G) = 0$ in the non-isolated singularity case.
\end{remark}

\begin{remark}\label{rem:K-1-huge}
The structure of the groups $\Kt_j(\A^n/G)$ for $j \ge 1$ is in general not known.
Since the work of Srinivas it is known that $\Ker(\Kt_1(\A^n/G) \to \Kt_1(k))$ is ``huge'', 
that is as large as the base field $k$, even in the simplest $\frac12(1,1)$ case \cite{srinivas3},
and the same follows for $\Kt_2^{sg}(\A^n/G)$ from Proposition \ref{prop:K_j^Sg}.
\end{remark}

In order to prove Proposition \ref{prop:K_j^Sg},
we use the language of equivariant $\Kt$-theory \cite{thomason-equivariant-I}
which for finite groups can also be interpreted as $\Kt$-theory of Deligne-Mumford stacks
\cite{Joshua-Krishna}.

\begin{lemma}\label{lem:r_j}
In the assumptions of Proposition \ref{prop:K_j^Sg}, let $i: \Spec(k) \to \A^n$ be the closed embedding of the 
origin $0$. Then the following is true.

(1) We have natural $R(G)$-module isomorphisms $K_j^G(\A^n) \overset{i^*_G}{\simeq} K_j^G(k) \simeq R(G) \otimes K_j(k)$
and a commutative diagram
\[
\xymatrix{
\Kt_j^G(k) \ar[r]^{i^G_*} & \Kt_j^G(\A^n) \\ 
R(G) \otimes \Kt_j(k) \ar[u]^\simeq \ar[r]^{r_j} & R(G) \otimes \Kt_j(k) \ar[u]^\simeq \\
}
\]
where $r_j$ is multiplication by the Koszul class $r \in R(G)$ defined in Proposition \ref{prop:gsprverdier}.

(2) Let $\pi_{0}$ be the projection from the Deligne-Mumford stack $[\Spec(k)/G]$ to its coarse
moduli space $\Spec(k)$ and let $\alpha_j$ be the restriction of $\pi_{0,*}: \Kt_j^G(k) \to \Kt_j(k)$
to $\Ker(i_*^G) = \Ker(r_j)$. 
Then for every $j \ge 0$ there is a commutative diagram
\begin{equation}\label{eq:alpha}
\xymatrix{
0 \ar[r] & \Kt_j(k) \ar[r] \ar[dr]_{\simeq} & \Ker(r_j) \ar[d]^{\alpha_j} \ar[r]& T_j \ar[r] & 0\\  
& & \Kt_j(k) &  & \\
}
\end{equation}
where $T_j$ is defined as in Proposition \ref{prop:K_j^Sg}
and the top row is exact.

(3) Let $\pi$ be the projection from the Deligne-Mumford stack $[\A^n/G]$ to its coarse moduli space
$\A^n/G$. For every $j \ge 0$ consider the subgroup $\Kt_j(k) \simeq 1 \otimes \Kt_j(k) \subset R(G) \otimes \Kt_j(k) =
\Kt_j^G(\A^n)$,
and let $\beta_j$ be the restriction of $\pi_*: \Kt_j^G(\A^n) \to \Gt_j(\A^n/G)$ 
to this subgroup.
Then $\beta_j$ is isomorphic to pullback morphism $p^*: \Kt_j(k) \to \Gt_j(\A^n/G)$ induced by 
the structure morphism $p: \A^n/G \to \Spec(k)$.

Furthermore, for $j \ge 1$ the embedding $1 \otimes \Kt_j(k) \subset R(G) \otimes K_j(k)$
induces an isomorphism $\Kt_j(k) \simeq \Coker(r_j)$.
\end{lemma}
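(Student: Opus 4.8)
The plan is to carry out all three parts inside Thomason's equivariant $\Kt$-theory $\Kt^G_*$ \cite{thomason-equivariant-I}, exploiting the $R(G)$-module structure throughout. For (1) I would first invoke equivariant homotopy invariance \cite[4.1]{thomason-equivariant-I}, which gives that the pull-back $i^*_G \colon \Kt_j^G(\A^n) \to \Kt_j^G(k)$ along the origin is an isomorphism; combined with the decomposition of the semisimple category $\mathrm{Rep}_k(G)$ into its isotypic blocks, this yields $\Kt_j^G(k) \simeq R(G) \otimes_\Z \Kt_j(k)$ as $R(G)$-modules. To identify the bottom arrow of the diagram, I would use that the right-hand vertical isomorphism is precisely $i^*_G$, so the bottom arrow is $i^*_G \circ i^G_*$. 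The self-intersection formula for the regular equivariant embedding $i$ gives $i^*_G i^G_*(\eta) = \eta \cdot \lambda_{-1}(N^\vee)$, where $N = \rho$ is the normal bundle of the origin; since $\lambda_{-1}(\rho^\vee) = \sum_i (-1)^i [\Lambda^i \rho^\vee] = r$, this is exactly multiplication $r_j$. This is the higher-$\Kt$ upgrade of the $\Kt_0$ computation already made in the proof of Proposition~\ref{prop:gsprverdier}, now available because $i^G_*$ is $R(G)$-linear.

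For (2), the top exact row should come from a universal-coefficients computation of $\Ker(r_j)$. Factoring $r\colon R(G)\to R(G)$ through its image and tensoring the two resulting short exact sequences with $\Kt_j(k)$—using that submodules of the free group $R(G)$ are free, so no $\Tor$ appears from the left sequence, while the right sequence contributes $\Tor_1(\Coker(r),\Kt_j(k))$—produces a short exact sequence $0 \to \Ker(r)\otimes \Kt_j(k) \to \Ker(r_j) \to \Tor_1(\Coker(r),\Kt_j(k)) \to 0$. Here $\Coker(r) = \Gt_0(\A^n/G) = \Z \oplus \Ksg_0(\A^n/G)$ by Propositions~\ref{prop:gsprverdier} and~\ref{prop:G_0}, so $\Tor_1(\Coker(r),\Kt_j(k)) = T_j$; and by Remark~\ref{remark:gspverdier} the kernel $\Ker(r)$ has rank one, and being a subgroup of the free group $R(G)$ it is free, hence $\Ker(r)\otimes\Kt_j(k) \simeq \Kt_j(k)$, giving the stated top row. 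I expect the main obstacle to be the diagonal isomorphism $\alpha_j$. The strategy is to pin down $\Ker(r)\simeq\Z$ via characters: $r$ acts on $R(G)\otimes \Q \simeq \prod_{[g]} \Q$ by multiplication by $\det(1-\rho^\vee(g))$, which vanishes exactly at $g = e$ because the $G$-action on $\A^n \setminus \{0\}$ is free ($1$ is not an eigenvalue of $\rho^\vee(g)$ for $g\ne e$). Hence the class $[\rho_{\mathrm{reg}}]$ of the regular representation, whose character is supported at $e$, lies in $\Ker(r)$, and since $\pi_{0,*}$ is the projection onto the trivial-representation block, i.e. $[V] \mapsto \dim_k V^G$, one has $\pi_{0,*}([\rho_{\mathrm{reg}}]) = 1$. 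Thus $\pi_{0,*}\colon \Ker(r)\simeq\Z \to \Z$ is surjective, hence an isomorphism; tensoring with $\Kt_j(k)$ then shows $\alpha_j$ restricts to an isomorphism on the subgroup $\Kt_j(k)\simeq \Ker(r)\otimes \Kt_j(k)$, and commutativity of the triangle is automatic.

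For (3), I would identify $\beta_j$ by the projection formula for the coarse-space morphism $\pi\colon [\A^n/G] \to \A^n/G$. Writing $\tilde p$ for the structure map of the stack, the subgroup $1 \otimes \Kt_j(k)$ is $\tilde p^* \Kt_j(k) = \pi^* p^* \Kt_j(k)$, so $\pi_*(\tilde p^* a) = p^*(a)\cdot \pi_*[\OO_{[\A^n/G]}] = p^*(a)$, using $\pi_*\OO = \OO_{\A^n/G}$ (taking $G$-invariants is exact in characteristic zero); hence $\beta_j = p^*$. For the final isomorphism, right-exactness of $-\otimes \Kt_j(k)$ gives $\Coker(r_j) \simeq \Coker(r)\otimes \Kt_j(k) = \Kt_j(k) \oplus \big(\Ksg_0(\A^n/G)\otimes \Kt_j(k)\big)$, and the image of $1 \otimes \Kt_j(k)$ is exactly the first summand $[\OO_{\A^n/G}]\otimes \Kt_j(k)$. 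For $j \ge 1$ the field being algebraically closed makes $\Kt_j(k)$ divisible (Suslin), so the finite group $\Ksg_0(\A^n/G)$ tensored with it vanishes, and the embedding $1\otimes\Kt_j(k)$ indeed induces $\Kt_j(k) \simeq \Coker(r_j)$.
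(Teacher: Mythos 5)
Your proposal is correct and follows the paper's overall architecture (homotopy invariance plus the $R(G)\otimes\Kt_j(k)$ decomposition in (1), the universal coefficient sequence for $[r:R(G)\to R(G)]$ in (2), the projection formula and divisibility of $\Kt_j(k)$ in (3)), but it substitutes its own arguments for the two steps the paper handles by citation. In (1) the paper simply quotes Vistoli's Lemma 1.7 for the identification of $i^*_G i^G_*$ with multiplication by $r$; your derivation via the equivariant self-intersection formula $i^*_G i^G_*(\eta)=\eta\cdot\lambda_{-1}(N^\vee)$ is exactly the content of that lemma, so this is a matter of unwinding a reference. The genuine divergence is in (2): the paper reduces commutativity of the triangle to showing $\pi_{0,*}(t)=\pm 1$ for a generator $t$ of $\Ker(r)\simeq\Z$, and proves this by extending the localization ladder \eqref{eq:K-equiv-seq} one term to the left (Thomason's Theorem 2.7) and chasing the resulting diagram. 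You instead argue character-theoretically: since the action on $\A^n\setminus\{0\}$ is free, the character $\det(1-\rho^\vee(g))$ of $r$ vanishes only at $g=e$, so the regular representation $[\rho_{\mathrm{reg}}]$ lies in $\Ker(r)$, and $\pi_{0,*}([\rho_{\mathrm{reg}}])=\dim_k(k[G])^G=1$ forces $\pi_{0,*}|_{\Ker(r)}:\Z\to\Z$ to be surjective, hence an isomorphism. This is a valid and more self-contained argument: it avoids any further input from the equivariant localization sequence, identifies an explicit element of $\Ker(r)$ mapping to $1$, and makes transparent why freeness of the action away from the origin is used. What it gives up is that the paper's diagram-chase sets up exactly the ladder that is reused in the proof of Proposition \ref{prop:K_j^Sg}, so the two steps come for free together there; your route proves the same statement independently of that machinery. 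Both proofs are complete; no gap.
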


\begin{proof}
(1) $i^{*}_G$ being an isomorphism is the standard homotopy invariance of $K$-theory in the regular case \cite[4.1]{thomason-equivariant-I},
$\Kt_j^G(k) \simeq R(G) \otimes \Kt_j(k)$ holds e.g. by \cite[Proposition 1.6]{vistoli}.
The commutative diagram follows from \cite[Lemma 1.7]{vistoli}.

(2) Since the map $r_j$ is isomorphic to $r \otimes \id$, $\Ker(r_j)$ can be computed
via the Universal Coefficient Theorem applied to the complex $[r: R(G) \to R(G)]$
as follows. We have
\[
0 \to \Ker(r) \otimes K_j(k) \to \Ker(r_j) \to \Tor(\Coker(r), K_j(k)) \to 0. 
\]
By Remark \ref{remark:gspverdier},
$\Ker(r) = \Z \cdot t$, for some element $t \in R(G)$, and by Proposition \ref{prop:gsprverdier},
$\Coker(r) \simeq \Z \oplus \Ksg_0(\A^n/G)$. We see that
$\Ker(r) \otimes \Kt_j(k) = \Kt_j(k)$ and
\[
\Tor(\Coker(r), K_j(k)) = \Tor(\Ksg_0(\A^n/G)), \Kt_j(k)) = T_j
\]
so that the top row of \eqref{eq:alpha} is exact.
We compute $\alpha_j$ as follows
\[
\alpha_j\vert_{t \otimes \Kt_j(k)} = \pi_{0,*}\vert_{t \otimes \Kt_j(k)} = \pi_{0,*}(t) \cdot \id_{\Kt_j(k)},
\]
and for commutativity of \eqref{eq:alpha} it remains to show that $\pi_{0,*}(t) = \pm 1$. 
This follows easily by extending the commutative diagram \eqref{eq:K-equiv-seq}
on term to the left \cite[Theorem 2.7]{thomason-equivariant-I}
(cf $j=1$ case in \eqref{eq:ladder} in the Proof of Proposition \ref{prop:K_j^Sg}).

(3) The fact that $\beta_j$ is equal to $p^*$ follows from the projection formula.
Since tensor product is right exact we have 
\[
\Coker(r_j) = \Coker(r) \otimes \Kt_j(k) \simeq (\Z \oplus \Ksg_0(\A^n/G)) \otimes \Kt_j(k).
\]
Since $k$ is algebraically closed, by a result of Suslin \cite{suslin}, for every $j \ge 1$, $\Kt_j(k)$
is a divisible group, so that since $\Ksg_0(\A^n/G)$ is torsion, $\Ksg_0(\A^n/G)) \otimes \Kt_j(k) = 0$,
and we have
\[
\Coker(r_j) \simeq \Kt_j(k),
\]
induced by tensoring $\rk: R(G) \to \Z$ by $\Kt_j(k)$. Since $\rk(1) = 1$, the composition 
\[
1 \otimes \Kt_j(k) \subset R(G) \otimes \Kt_j(k) \to \Coker(r_j)
\]
provides a splitting, and hence the inverse to this morphism.
\end{proof}

\begin{proof}[Proof of Proposition \ref{prop:K_j^Sg}]
(1) By Proposition \ref{prop:G_0},
$\Ksg_0(\A^n/G)$ is a torsion group annihilated by
$|G|^{n-1}$, hence the same is true for $T_j$.

For even $j$, $\Kt_j(k)$ of an algebraically closed
field is torsion-free by a result of Suslin \cite{suslin},
hence $T_j$ = 0 for even $j$.

For odd $j$, and every $n \ge 1$, the $n$-torsion subgroup $\Kt_j(k)$ is finite \cite{suslin},
and since $\Ksg_0(\A^n/G)$ is a finite abelian group, $T_j$ is finite as well.

(2) The key in proving \eqref{eq:G_j-AnG} is to compare the localization sequence for $\Gt$-theory of 
$\A^n/G$ to the $G$-equivariant localization sequence for $\Kt$-theory of $\A^n$. The two
sequences are related by pushforward morphisms:
\begin{equation}\label{eq:ladder}
\xymatrix{
\Kt_j^G(k) \ar[r]^{i_*^G} \ar[d]_{\pi_{0,*}} & \Kt_j^G(\A^n) \ar[r] \ar[d]_{\pi_{*}} & \Kt_j^G(\A^n \setminus \{0\}) \ar[r] \ar[d] & \Kt_{j-1}^G(k) \ar[r]^{i_*^G} \ar[d]_{\pi_{0,*}} & \Kt_{j-1}^G(\A^n) \ar[d]_{\pi_{*}} \\
\Kt_j(k) \ar[r]^{i_*} & \Gt_j(\A^n/G) \ar[r] & \Gt_j((\A^n \setminus \{0\})/G) \ar[r] & \Kt_{j-1}(k) \ar[r]^{i_*} & \Gt_{j-1}(\A^n/G) \\
}
\end{equation}

Here $i: \Spec(k) \to \A^n/G$ is the origin (unique fixed point of the action), and $\pi$ (resp. $\pi_0$)
is the canonical morphism from the quotient
Deligne-Mumford stack $[\A^n/G]$ (resp. $[\Spec(k)/G]$)
to its coarse moduli space, as in Lemma \ref{lem:r_j}.

The morphisms $i_*: \Gt_j(k) \to \Gt_j(\A^n/G)$ are zero as they factor 
through the push-forward $\Gt_j(k) \to \Gt_j(\A^n)$ which are
zero maps by the Bass formula
in $\Gt$-theory \cite[chapter 6 Theorem 8. ii]{quillen}.
Thus the localization sequence for $\Gt$-theory of $\A^n/G$ splits into short exact sequences.

Using isomorphisms given by Lemma \ref{lem:r_j}, from the commutative ladder \eqref{eq:ladder}
for every $j \ge 1$ we obtain the diagram:
\[\xymatrix{ 
0 \ar[r] & \Coker(r_j) \ar[r] \ar[d]_{\beta_j}  & \Kt_j^G(\mathbb{A}^n\setminus \{0\}) \ar[r] \ar[d] & \Ker(r_{j-1}) \ar[r] \ar[d]_{\alpha_{j-1}}  & 0 \\
0 \ar[r] & \Gt_j(\mathbb{A}^n/G) \ar[r] & \Gt_j((\mathbb{A}^n\setminus \{0\})/G) \ar[r] & \Kt_{j-1}(k) \ar[r] & 0
}\]

Since $0$ is the only fixed point of the action, the action of $G$
on $\A^n \setminus \{0\}$ is free,
so that the middle vertical map is an isomorphism and using the Snake Lemma we deduce 
an isomorphism
\[
\Ker(\alpha_{j-1}) \simeq \Coker(\beta_j). 
\]

From Lemma \ref{lem:r_j} (2) we get $\Ker(\alpha_j) \simeq T_j$
and from Lemma \ref{lem:r_j} (3) we get $\Coker(\beta_j) \simeq \Coker(p^*: \Kt_j(k) \to \Gt_j(\A^n/G))$.
Since $p^*$ is injective (it is split by any smooth point $x_1 \in \A^n/G$), we obtain the 
exact sequence \eqref{eq:G_j-AnG}.

(3) Since $\A^n/G$ is contractible by Lemma \ref{lem:contractible},
one gets
\eqref{eq:K_jsg-AnG}
by plugging in \eqref{eq:G_j-AnG}
into the exact sequence of Proposition \ref{prop:A1-Ksg} (3).
\end{proof}

\subsection{The global case}

\begin{theorem}\label{theorem:main-thm}
Let $k$ be an algebraically closed field of characteristic zero and let
$X$ be an $n$-dimensional quasi-projective variety.
Assume that $X$ has only isolated quotient singularities $x_1, \dots, x_m$ with isotropy groups $G_1, \dots, G_m$,
i.e. the completions $\wh{\OO}_{X,{x_i}}$ are isomorphic to $\wh{\OO}_{\A^n/G_i,{0}}$
where each $G_i \subset \GL_n(k)$ is a finite group acting freely away from the origin.
Then
\begin{itemize}
\item[(1)] $\Ksg_0(X) \subset \KKsg_0(X)$ are finite abelian groups, 
annihilated by $\lcm(|G_1|,\ldots ,|G_m|)^{n-1}.$
\item[(2)] $\Ksg_1(X)=0$.
\item[(3)] For all $j \ge 1$, we have $\Ksg_{-j}(X)=0$.
\end{itemize}

In addition, if $\dim(X)=2$, then $\KKsg_0(X) \simeq \widehat{G_1}\times\ldots\times\widehat{G_m}$.
\end{theorem}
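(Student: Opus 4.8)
The plan is to reduce every group in the theorem to the local computations of the two preceding subsections, using that the idempotent-completed singularity category is an invariant of the formal neighbourhood of the singular locus (Theorem \ref{theorem:idemp-formal-completion}) together with additivity of Schlichting $\Kt$-theory over disjoint unions. Concretely I would introduce the auxiliary scheme $X' = \bigsqcup_{i=1}^m \A^n/G_i$, which is affine of finite type over $k$, hence quasi-projective, and whose singular locus is the set of the $m$ origins. By hypothesis there is an isomorphism of formal completions along the singular loci, $\wh{\OO}_{X',\Sing(X')} \simeq \bigsqcup_i \wh{\OO}_{\A^n/G_i,0} \simeq \bigsqcup_i \wh{\OO}_{X,x_i} \simeq \wh{\OO}_{X,\Sing(X)}$, so Theorem \ref{theorem:idemp-formal-completion} produces a dg-equivalence $\ol{\Dsg(X)} \simeq \ol{\Dsg(X')}$. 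Since $X'$ is a finite disjoint union, $\Db_{dg}(X')$ and $\Dperf_{dg}(X')$ split as direct sums over the components, hence so does $\Dsg_{dg}(X')$ together with its idempotent completion; by the localization property (2) (which is split here) all $\K_i$ decompose as $\K_i(\ol{\Dsg_{dg}(X')}) \simeq \bigoplus_i \K_i(\ol{\Dsg_{dg}(\A^n/G_i)})$.

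Next I would transport the groups across these equivalences, keeping track of the distinction between $\Ksg$ and $\KKsg$. For $i \ne 0$, Remark \ref{rem:KK} identifies $\Ksg_i(X) = \K_i(\Dsg_{dg}(X)) \simeq \K_i(\ol{\Dsg_{dg}(X)})$, and combining this with the dg-equivalence and the splitting gives $\Ksg_i(X) \simeq \bigoplus_i \Ksg_i(\A^n/G_i)$. For $\K_0$, property (0) together with Remark \ref{rem:KK} shows that $\KKsg_0(X)$ is the Grothendieck group of $\ol{\Dsg(X)}$, so the same equivalence yields $\KKsg_0(X) \simeq \bigoplus_i \KKsg_0(\A^n/G_i)$; and since each $\A^n/G_i$ has an isolated singularity, Corollary \ref{cor:A^n/G-idempotent-complete} makes $\Dsg(\A^n/G_i)$ idempotent complete, so that $\KKsg_0(\A^n/G_i) = \Ksg_0(\A^n/G_i)$. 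With these identifications the three assertions follow from the local results: statement (1) from the finiteness (Proposition \ref{prop:G_0}) and the annihilator $|G_i|^{n-1}$ (Proposition \ref{prop:K_0^Sg-and-order}) of $\Ksg_0(\A^n/G_i)$, whence $\KKsg_0(X)$ is annihilated by $\lcm(|G_1|,\dots,|G_m|)^{n-1}$ and the subgroup $\Ksg_0(X) \subset \KKsg_0(X)$ (Remark \ref{rem:KK}) inherits the same bound; statement (2) from Corollary \ref{K_1^Sg=0}; and statement (3) from the vanishing $\Ksg_{-j}(\A^n/G_i) = 0$ of Proposition \ref{prop:K_-1}.

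For the final assertion in dimension two, I would first observe that the hypothesis that $G_i$ acts freely on $\A^n \setminus \{0\}$ rules out reflections, since a reflection fixes a hyperplane which for $n \ge 2$ meets $\A^n \setminus \{0\}$; hence the reflection subgroup $N_i$ is trivial and $\wh{G_i/N_i} = \wh{G_i}$. Proposition \ref{prop:AnG-c1} then gives an isomorphism $c_1 : \Ksg_0(\A^2/G_i) \xrightarrow{\sim} \wh{G_i}$, and summing over $i$ produces $\KKsg_0(X) \simeq \wh{G_1} \times \dots \times \wh{G_m}$.

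The main obstacle, and the step requiring the most care, is the bookkeeping between $\Ksg_0$ and its idempotent completion $\KKsg_0$. The formal-completion theorem controls only the idempotent completions $\ol{\Dsg}$, so a priori the clean direct-sum decomposition holds for $\KKsg_0$ rather than for $\Ksg_0$, and one genuinely needs the idempotent completeness of the local models (Corollary \ref{cor:A^n/G-idempotent-complete}) to collapse the two notions locally and thereby control $\KKsg_0(X)$. The companion point to verify is that the direct-sum decomposition of $\Dsg_{dg}(X')$ is compatible with the dg-enhancements, so that all $\K_i$ — and not merely $\K_0$ — split simultaneously.
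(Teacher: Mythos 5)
Your proposal is correct and follows essentially the same route as the paper: Orlov's formal-completion theorem combined with the idempotent completeness of the local models $\Dsg(\A^n/G_i)$ (Corollary \ref{cor:A^n/G-idempotent-complete}) reduces everything to the local computations, exactly as in the paper's proof, with the three statements and the surface case cited from the same local results. Your extra care about the $\Ksg_0$ versus $\KKsg_0$ distinction and the absence of reflections is precisely the bookkeeping the paper performs, just spelled out more explicitly.
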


\begin{proof}
%

By Orlov's Completion Theorem \ref{theorem:idemp-formal-completion} 
and Corollary \ref{cor:A^n/G-idempotent-complete} we obtain equivalences
\[
\overline{\Dsg(X)}\simeq
\bigoplus_{i=1}^m \overline{\Dsg(\A^n/G_i)} \simeq
\bigoplus_{i=1}^m \Dsg(\A^n/G_i).
\]
induced by functors between dg-enhancements.

Thus by definition of the singularity $\Kt$-theory groups and Remark \ref{rem:KK}
we have
\[
\Ksg_0(X) \subset \KKsg_0(X) \simeq \bigoplus_{i=1}^m \Ksg_0(\A^n/G_i)
\]
and for $j \neq 0$
\[
\Ksg_j(X) \simeq \bigoplus_{i=1}^m \Ksg_j(\mathbb{A}^n/G_i).
\]

Now (1) follows from Propositions \ref{prop:G_0}, \ref{prop:K_0^Sg-and-order},
(2) follows from Proposition \ref{K_1^Sg=0} and
(3) follows from Proposition \ref{prop:K_-1}.

Finally if $\dim(X) = 2$, we have isomorphisms
$
\Ksg_0(\A^n/G_i) = \wh{G_i}
$
by Proposition \ref{prop:AnG-c1} ($G_i$ acts freely on $\A^n \setminus \{0\}$ and in
particular has no reflections) so that in this case 
$\KKsg_0(X) \simeq \wh{G_1} \times \dots \times \wh{G_m}$.
\end{proof}

\begin{corollary}\label{cor:main-corollary} 
Under the assumptions of Theorem \ref{theorem:main-thm} the following is true.
\begin{itemize}
\item[(1)] We have an exact sequence $0\to \Kt_0(X)\to \Gt_0(X)\to \Ksg_0(X)\to 0$.
\item[(2)] $\Kt_{-1}(X)$ is a finite torsion abelian group satisfying the same condition on orders as $\Ksg_0(X)$ (see Theorem \ref{theorem:main-thm} (1)).
\item[(3)] For all $j \ge 2$, we have $\Kt_{-j}(X)=0$.
\end{itemize}
\end{corollary}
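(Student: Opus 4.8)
The plan is to deduce all three statements formally from the singularity $\Kt$-theory exact sequences collected in Lemma \ref{lem:sing-K-th}, feeding in the vanishing and finiteness results just established in Theorem \ref{theorem:main-thm}. None of the three parts should require new geometric input: the substantive work is already contained in the local computations and Orlov's completion theorem, and this Corollary is the bookkeeping step that translates statements about $\Ksg$ into statements about $\Kt$ and $\Gt$.

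For part (1), I would examine the tail of the long exact sequence \eqref{eq:seq-k}, namely the four-term piece
\[
\Ksg_1(X) \to \Kt_0(X) \overset{\can}{\to} \Gt_0(X) \to \Ksg_0(X) \to 0.
\]
Since Theorem \ref{theorem:main-thm}(2) gives $\Ksg_1(X) = 0$, the map $\Ksg_1(X) \to \Kt_0(X)$ is zero, so $\can$ is injective; combined with the surjectivity onto $\Ksg_0(X)$ that is built into the sequence, this yields exactly the short exact sequence $0 \to \Kt_0(X) \to \Gt_0(X) \to \Ksg_0(X) \to 0$.

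For part (2), I would invoke the short exact sequence \eqref{eq:k_-1},
\[
0 \to \Ksg_0(X) \to \KKsg_0(X) \to \Kt_{-1}(X) \to 0,
\]
which exhibits $\Kt_{-1}(X)$ as the quotient $\KKsg_0(X)/\Ksg_0(X)$. By Theorem \ref{theorem:main-thm}(1) the group $\KKsg_0(X)$ is finite and annihilated by $\lcm(|G_1|,\dots,|G_m|)^{n-1}$, and both properties pass to any quotient, so $\Kt_{-1}(X)$ inherits them. For part (3), I would use the isomorphisms \eqref{eq:k_-j}, which read $\Ksg_{-j}(X) \simeq \Kt_{-j-1}(X)$ for $j \ge 1$; since Theorem \ref{theorem:main-thm}(3) gives $\Ksg_{-j}(X) = 0$ for all $j \ge 1$, reindexing yields $\Kt_{-j}(X) = 0$ for all $j \ge 2$.

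There is essentially no hard step here; the only points demanding care are the correct identification of which term of each exact sequence is a quotient versus a subobject (so that finiteness and the annihilator bound are transferred in the right direction), and the index shift in \eqref{eq:k_-j}. I would therefore keep the argument to a few lines, citing Lemma \ref{lem:sing-K-th} and Theorem \ref{theorem:main-thm} at each stage.
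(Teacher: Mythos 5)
Your proposal is correct and follows exactly the paper's route: the paper's own proof is the one-line statement that the Corollary follows from Theorem \ref{theorem:main-thm} together with Lemma \ref{lem:sing-K-th}, and your three-part argument simply makes explicit which piece of that lemma (the tail of \eqref{eq:seq-k}, the sequence \eqref{eq:k_-1}, and the isomorphisms \eqref{eq:k_-j}) is combined with which part of the theorem. All the transfers of finiteness, the annihilator bound, and the index shift are handled correctly.
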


\begin{proof}
This follows from Theorem \ref{theorem:main-thm} and 
Lemma \ref{lem:sing-K-th}.
\end{proof}

\begin{remark}
The injectiviy of the canonical map $\Kt_0(X)\to \Gt_0(X)$ will generally fail if either:
\begin{itemize}
 \item[(a)] $X$ has quotient singularities which are not isolated, see Example \ref{ex:toric}
 \item[(b)] $X$ has an isolated rational singularity which is not a quotient singularity, see Example \ref{ex:cubic-cone}
\end{itemize}
\end{remark}

\begin{remark}
We do not know if $\Ksg_0(X) = \Coker(\Kt_0(X) \to \Gt_0(X))$ is torsion for any variety $X$ with quotient singularities, not necessarily isolated ones.
The result is known to be true for simplicial toric varieties \cite{BV}.
\end{remark}

\begin{example}
One of the simplest examples of a projective surface $X$ with quotient singularities and 
non-vanishing $\Kt_{-1}(X)$ is the following one.
Consider $G = \Z_2$ acting on $\P^1$ via $[x:y] \mapsto [x:-y]$ and let 
$X = (\P^1 \times \P^1) / \Z_2$ where the action is diagonal. Thus $X$ has four ordinary double points
as singularities.
Using \cite{weibel-surfaces} one can compute that $\Kt_{-1}(X) = \Z_2$.
\end{example}

\subsection{Relation to the resolution of singularities} 

Let $\pi: Y \to X$ be a resolution of singularities. Here $X$ is a variety and $Y$ is a variety or more generally a Deligne-Mumford stack.
If we assume that singularities of $X$ are rational which by definition means that $\pi_* \OO_Y = \OO_X$, 
using the projection formula we get a commutative diagram 
\[
\xymatrix{
 & \Db(Y) \ar@{<-}[ld]_{\pi^*}\ar@{->}[rd]^{\pi_*} & \\
 \Perf(X)\ar@{->}[rr]^{} && \Db(X)}
\]
where the functors $\Perf(X) \to \Db(X)$ and $\pi^*$ are both fully faithful.

We also get an induced diagram on the Grothendieck groups
\begin{equation}\label{diagram:res-of-sing}
\xymatrix{
 & \Kt_0(Y) \ar@{<-}[ld]_{\pi^*}\ar@{->}[rd]^{\pi_*} & \\
 \Kt_0(X)\ar@{->}[rr]^{\can} && \Gt_0(X)}
\end{equation}

\begin{theorem}\label{thm:res-of-sing-injective}
If $X$ is a quasi-projective variety over an algebraically closed field of characteristic zero and with only isolated quotient singularities, then $\pi^*: \Kt_0(X) \to \Kt_0(Y)$ is injective.
\end{theorem}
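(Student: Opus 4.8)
The plan is to deduce injectivity of $\pi^*$ formally from the injectivity of the canonical Poincar\'e duality map $\can: \Kt_0(X) \to \Gt_0(X)$, which is already available to us. Diagram \eqref{diagram:res-of-sing} asserts that $\can = \pi_* \circ \pi^*$; hence once $\can$ is known to be injective, its first factor $\pi^*$ is forced to be injective as well. Under the present hypotheses the injectivity of $\can$ is exactly the content of Corollary \ref{cor:main-corollary} (1) (equivalently, Theorem \ref{theorem:main-thm} combined with the exact sequence of Lemma \ref{lem:sing-K-th}), so the essential analytic work has already been carried out. It therefore only remains to justify that the two arrows in \eqref{diagram:res-of-sing} are well-defined and that the triangle commutes.

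First I would check that both maps make sense. The derived pull-back sends perfect complexes to perfect complexes, so $\pi^*: \Kt_0(X) \to \Kt_0(Y)$ is defined with no hypothesis on $\pi$; and since $Y$ is regular we have $\Kt_0(Y) = \Gt_0(Y)$, so the proper push-forward on coherent sheaves furnishes $\pi_*: \Kt_0(Y) \to \Gt_0(X)$. Next I would establish the identity $\pi_* \circ \pi^* = \can$. The key input is that isolated quotient singularities in characteristic zero are rational, so that $\pi_* \OO_Y \simeq \OO_X$ in $\Db(X)$. For a perfect complex $E$ on $X$ the projection formula then yields
\[
\pi_*(\pi^* E) \simeq E \otimes \pi_* \OO_Y \simeq E,
\]
an isomorphism in $\Db(X)$; passing to Grothendieck groups gives $\pi_*(\pi^*[E]) = [E] = \can([E])$, which is the asserted commutativity. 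This computation is uniform whether $Y$ is a variety or a Deligne--Mumford stack, since it uses only properness of $\pi$, regularity of $Y$, and the rationality identity $\pi_* \OO_Y \simeq \OO_X$.

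With these two points in hand the conclusion is immediate: if $\pi^*(\alpha) = 0$ then $\can(\alpha) = \pi_*\pi^*(\alpha) = 0$, whence $\alpha = 0$ by Corollary \ref{cor:main-corollary} (1). I do not expect a serious obstacle here; the genuine depth of the statement lies entirely in the already-proven injectivity of $\can$, and the only points requiring care are the derived projection formula and the rationality of quotient singularities, both of which are standard. Finally I would remark that this improves the result of Levine \cite{levine}, who obtained injectivity of $\pi^*$ only in dimension at most three and a torsion kernel in general.
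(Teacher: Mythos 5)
Your proposal is correct and follows essentially the same route as the paper: the paper's proof likewise consists of citing the injectivity of $\can$ from Corollary \ref{cor:main-corollary} and reading off the injectivity of $\pi^*$ from the commuting triangle \eqref{diagram:res-of-sing}, whose commutativity is established exactly as you do, via rationality of quotient singularities ($\pi_*\OO_Y \simeq \OO_X$) and the projection formula. Your write-up simply makes explicit the routine verifications that the paper relegates to the discussion preceding the theorem.
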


\begin{proof}
By Corollary \ref{cor:main-corollary}, $\Kt_0(X) \overset{\can}{\to} \Gt_0(X)$ is injective. Injectivity of $\pi^*$ follows from the diagram (\ref{diagram:res-of-sing}).
\end{proof}

\begin{remark}
The injectivity of $\pi^*: \Kt_0(X) \to \Kt_0(Y)$ does not follow from the fact that $\pi^*: \Perf(X) \to \Db(Y)$ is fully faithful and will generally fail for rational singularities.
Indeed in Examples \ref{ex:toric}, \ref{ex:cubic-cone} varieties with rational singularities have huge $\Kt_0(X)$, but admit resolutions with finitely generated $\Kt_0(Y)$.

In dimension up to three, Theorem \ref{thm:res-of-sing-injective} has been known since the work of Levine 
\cite[Corollary 3.4]{levine}
and for normal
surfaces with rational singularities an analogous result follows from the work of Krishna and 
Srinivas \cite[Corollary 1.5]{Krishna-Srinivas}.
\end{remark}

There is an apparent duality between $\pi_*$ and $\pi^*$ in the diagram (\ref{diagram:res-of-sing}).
Instead of injectivity of $\pi^*$ we can ask about surjectivity of $\pi_*$, which indeed sometimes holds.

\begin{theorem}\label{thm:res-of-sing-ess-surjective} 
Let $X$ be a variety over a field $k$ characteristic zero with quotient singularities (not necessarily isolated) 
and let $\pi :Y\to X$ be a resolution of singularities, where $Y$ is a variety or more generally a Deligne-Mumford stack. Then:
\begin{itemize}
\item[(1)] The pushforward $\pi_* :\Db(Y)\to \Db(X)$ is essentially surjective.
\item[(2)] The pushforward induces an exact equivalence 
\[
\Db(Y)/\ker(\pi_*)\xrightarrow{\simeq}\Db(X). 
\]
\end{itemize}
In particular $\pi_*: \Kt_0(Y) \to \Gt_0(X)$ is surjective.
\end{theorem}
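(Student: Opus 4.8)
The plan is to reduce the whole statement to the case of the \emph{canonical stacky resolution} $p\colon \XX\to X$, which \'etale-locally is the coarse-moduli map $[\A^n/G_i]\to \A^n/G_i$ of a quotient singularity, and then to exploit that, in characteristic zero, the pushforward along $p$ is nothing but the exact functor of taking $G_i$-invariants.

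First I would treat the stack. Since each $G_i$ is linearly reductive in characteristic zero, the functor $p_*$ is \emph{t-exact}: there are no higher direct images, so $Rp_*F$ is computed by applying $(-)^{G_i}$ termwise to any bounded complex of coherent sheaves representing $F$. The Reynolds operator $\tfrac1{|G_i|}\sum_{g} g$ is a natural $\OO_X$-linear projection exhibiting $\OO_X$ as a direct summand of $p_*\OO_{\XX}$; applying the underived pullback $p^*$ termwise to a bounded complex $E^\bullet$ of coherent sheaves on $X$ and then $p_*$ recovers $E^\bullet$ on the nose, since $(M\otimes p^*\OO)^{G_i}\simeq M$. As every coherent sheaf on $\XX$ is $p_*$-acyclic, $Rp_*$ of this termwise pullback is again $E^\bullet$, so $p_*\colon \Db(\XX)\to\Db(X)$ is essentially surjective. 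For the localization statement, t-exactness identifies $\Ker(Rp_*)$ with the complexes whose cohomology sheaves lie in the Serre subcategory $\mathcal{S}=\Ker\big((-)^{G_i}\colon\Coh(\XX)\to\Coh(X)\big)$; since $(-)^{G_i}$ realizes $\Coh(X)$ as the Serre quotient $\Coh(\XX)/\mathcal{S}$, I would deduce $\Db(\XX)/\Ker(Rp_*)\simeq\Db(X)$.

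Next I would bootstrap to an arbitrary resolution $\pi\colon Y\to X$. Choose a common smooth resolution $W$ dominating both $Y$ and $\XX$, say $f\colon W\to Y$ and $g\colon W\to\XX$, with $\pi f=p g=:\rho$. For the proper birational morphisms $f,g$ between smooth varieties/stacks the projection formula gives $Rf_*Lf^*=\id$ and $Rg_*Lg^*=\id$, so $Rf_*,Rg_*$ are essentially surjective Verdier localizations. Hence $R\rho_*=Rp_*\circ Rg_*$ is essentially surjective, and as $R\rho_*=R\pi_*\circ Rf_*$ factors through $R\pi_*$, the latter is essentially surjective as well, giving (1). For (2), the identity $Rf_*(\Ker R\rho_*)=\Ker R\pi_*$ together with the Verdier ``quotient of a quotient'' isomorphism turns the stack equivalence $\Db(\XX)/\Ker(Rp_*)\simeq\Db(X)$ into $\Db(Y)/\Ker(R\pi_*)\simeq\Db(X)$. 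Finally, essential surjectivity of $R\pi_*$ means every object of $\Db(X)$ is some $R\pi_*F$; passing to Grothendieck groups and using $\Kt_0(Y)=\Gt_0(Y)$ for smooth $Y$ shows $\pi_*\colon\Kt_0(Y)\to\Gt_0(X)$ is surjective.

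The main obstacle I expect is the passage from the abelian Serre-quotient equivalence $\Coh(\XX)/\mathcal{S}\simeq\Coh(X)$ to the bounded \emph{derived} localization $\Db(\XX)/\Ker(Rp_*)\simeq\Db(X)$. The subtlety is that the natural section $Lp^*$ is unbounded below, because $\OO_{\A^n}$ has infinite $\Tor$-dimension over $\OO_{\A^n/G_i}$ at the singular point, so one cannot simply invoke a fully faithful adjoint defined on $\Db$. I would circumvent this by using t-exactness of $p_*$ to represent morphisms in the quotient by bounded complexes and the Reynolds splitting to verify the lifting of $\Hom$-groups, checking directly that $\Db(\XX)/\Ker(Rp_*)\to\Db(X)$ is fully faithful. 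The remaining ingredients, namely the smooth birational localizations for $f$ and $g$ and the quotient-of-a-quotient bookkeeping, are formal.
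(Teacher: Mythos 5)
Your overall strategy coincides with the paper's at its core: both proofs reduce to the canonical stack $\XX\to X$ (which \'etale-locally is $[\A^n/G]\to\A^n/G$) and exploit that in characteristic zero the pushforward is exact on coherent sheaves. The two supporting steps, however, are genuinely different. For the reduction to a single resolution the paper invokes the stacky Weak Factorization Theorem, decomposing any two resolutions into stacky blow-ups $\gamma$ with $\gamma_*\OO=\OO$ and reading off the statement from the resulting semiorthogonal decompositions; you instead dominate $Y$ and $\XX$ by a common smooth $W$ and compose Verdier localizations. Your version works, but note that it silently uses $Rf_*\OO_W=\OO_Y$ for a proper birational map from a smooth orbifold to a smooth variety (true in characteristic zero via rational singularities of the coarse space, but worth saying), whereas weak factorization hands you this for free one blow-up at a time. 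For the canonical stack, your essential surjectivity argument via the termwise (underived) pullback $p^\bullet$ and the Reynolds splitting is correct and in fact more direct than the paper's truncation argument.

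The one place where your proposal has a real gap is full faithfulness of $\Db(\XX)/\Ker(Rp_*)\to\Db(X)$. The implication ``$\Coh(X)\simeq\Coh(\XX)/\mathcal{S}$ as a Serre quotient, hence $\Db(\XX)/\Ker(Rp_*)\simeq\Db(X)$'' is not formal: a Serre quotient of abelian categories does not in general induce a Verdier quotient of bounded derived categories, and your stated remedy (``represent morphisms by bounded complexes and lift Hom-groups via the Reynolds splitting'') does not engage the actual difficulty, which is that morphisms in $\Db(X)$ are roofs and the naive pullback $p^\bullet$ is only right exact, so it does not preserve quasi-isomorphisms. The paper resolves this by passing to $\Dm$, where $Lp^*$ is defined and fully faithful, obtaining $\Dm(\XX)/\Ker(p_*)\simeq\Dm(X)$, and then descending to $\Db$ via $t$-exactness and Verdier's criterion. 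Alternatively, your bounded approach can be salvaged by one extra observation you did not make: if $s$ is a quasi-isomorphism of bounded complexes on $X$, then $p_*(p^\bullet s)=s$ and exactness of $p_*$ force the cone of $p^\bullet s$ to lie in $\Ker(Rp_*)$, so $p^\bullet$ sends quasi-isomorphisms to isomorphisms in the quotient; combined with the fact that the counit $p^\bullet p_*E\to E$ also becomes an isomorphism there (its cone is killed by the exact $p_*$), this lets you transport roofs and homotopies and prove bijectivity on $\Hom$'s directly. As written, though, this key step is asserted rather than proved.
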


\begin{lemma}\label{lem:single-resolution}
If $k$ is a field of characteristic zero and the statement (1) or (2) of Theorem \ref{thm:res-of-sing-ess-surjective} holds for a single resolution $\pi: Y \to X$, then it holds for all resolutions of $X$.
\end{lemma}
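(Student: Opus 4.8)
The plan is to compare an arbitrary resolution with the given one through a common dominating resolution, and to transfer each of the properties (1), (2) along the two smooth birational morphisms that arise. Suppose the relevant statement holds for $\pi: Y \to X$, and let $\pi': Y' \to X$ be any other resolution. Let $U \subseteq X$ be the smooth locus, over which both $\pi$ and $\pi'$ are isomorphisms. Since $k$ has characteristic zero, resolution of singularities applied to the main (dominant) component of the fibre product $Y \times_X Y'$, i.e. the closure of the graph of the birational correspondence between $Y$ and $Y'$, produces a smooth $W$ together with proper morphisms $q: W \to Y$ and $q': W \to Y'$ with $\pi \circ q = \pi' \circ q' =: \rho$; both $q$ and $q'$ are isomorphisms over $U$, hence birational. (If $Y$ or $Y'$ is a Deligne--Mumford stack the same construction applies, resolving the main component of the fibre product as a stack.) It therefore suffices to prove that property (1) (resp. (2)) holds for $\pi$ if and only if it holds for $\rho$, since applying this to both $\pi$ and $\pi'$ gives the equivalence for all resolutions.

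The key input is the behaviour of pushforward along a proper birational morphism $q: W \to Y$ between smooth varieties (or smooth Deligne--Mumford stacks) in characteristic zero. Here $Y$ is smooth, hence has rational singularities, so $q_* \OO_W \simeq \OO_Y$; by the projection formula the unit $\id \to q_* q^*$ is an isomorphism, so $q^*: \Db(Y) \to \Db(W)$ is fully faithful and, since $q_* q^* F \simeq F$, the functor $q_*$ is essentially surjective. Moreover $q^*$ realises $\Db(Y)$ as an admissible subcategory of $\Db(W)$ with $\ker(q_*)$ the complementary semiorthogonal piece (concretely via the blow-up semiorthogonal decompositions of \cite{orlov-monoidal}, \cite[Theorem 6.9]{bergh-schn} together with weak factorisation in characteristic zero, which also cover the stacky case); consequently $q_*$ is a Verdier localisation inducing an equivalence $\Db(W)/\ker(q_*) \xrightarrow{\simeq} \Db(Y)$. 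The same holds for $q'$.

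With these inputs the transfer is formal. For (1): since $\rho_* = \pi_* \circ q_*$ and $q_*$ is essentially surjective, $\rho_*$ is essentially surjective whenever $\pi_*$ is; conversely, writing $\rho_* = \pi'_* \circ q'_*$ shows that essential surjectivity of $\rho_*$ forces essential surjectivity of $\pi'_*$. For (2): a composite of Verdier localisations is again a Verdier localisation, so (2) for $\pi$ together with the building-block statement for $q$ gives (2) for $\rho$; and since $q'_*$ is a localisation with $\ker(q'_*) \subseteq \ker(\rho_*)$, the standard right-cancellation for localisations (the factorisation $\Db(W) \to \Db(Y') \to \Db(W)/\ker(\rho_*)$ identifies $\pi'_*$ with a localisation) yields (2) for $\pi'$.

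I expect the main obstacle to be the building-block statement of the second paragraph: that pushforward along a proper birational morphism of smooth objects is not merely essentially surjective but a Verdier localisation, and in particular checking this in the Deligne--Mumford stack setting, where one must invoke the stacky blow-up and weak factorisation results rather than argue directly on schemes. The reduction step and the purely categorical cancellation arguments of the final paragraph are then routine.
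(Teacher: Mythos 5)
Your proof is correct, but it takes a genuinely different route through the reduction step. The paper applies the Weak Factorization Theorem \cite{Wlod, AKMW}, extended to Deligne--Mumford stacks in \cite{bergh1}, to decompose the birational map $Y \dashrightarrow Y'$ into a chain of single stacky blow-ups $\gamma$, and then transfers (1) and (2) across each $\gamma$ using the purely formal consequence of the adjunction: $\gamma_*\gamma^* \simeq \id$ gives the semiorthogonal decomposition $\Db(Y') = \langle \Ker(\gamma_*), \gamma^*\Db(Y)\rangle$ and hence the compatibility of Verdier quotients. You instead pass through a common dominating resolution $W$ and transfer along the two proper birational morphisms $q, q'$, using composition for one direction and right-cancellation of localizations for the other; both your cancellation arguments check out (in particular $q'_*(\Ker\rho_*) = \Ker(\pi'_*)$ follows from $q'_*q'^* \simeq \id$). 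One remark: you identify the ``building block'' --- that $q_*$ is a localization with admissible kernel for any proper birational morphism of smooth objects --- as the main obstacle and justify it via weak factorization and the blow-up decompositions of \cite{orlov-monoidal}, \cite[Theorem 6.9]{bergh-schn}, but this is more machinery than needed: exactly as in the paper's treatment of a single stacky blow-up, the semiorthogonal decomposition $\Db(W) = \langle \Ker(q_*), q^*\Db(W)\rangle$ is formal from the adjunction $(q^*, q_*)$ once $q_*q^*\simeq\id$, which you have already established from $Rq_*\OO_W \simeq \OO_Y$ and the projection formula. With that observation your route avoids weak factorization entirely; what it requires instead is the existence of a smooth common dominating resolution in the Deligne--Mumford setting (resolution of the main component of $Y\times_X Y'$ as a stack) and the identity $Rq_*\OO_W\simeq\OO_Y$ for proper birational morphisms of smooth stacks, both standard in characteristic zero but deserving of an explicit reference. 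The two approaches thus trade one nontrivial input (weak factorization for stacks) for another (stacky resolution plus the rationality of pushforward), and are otherwise parallel.
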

\begin{proof}
The proof is a standard application of the Weak Factorization Theorem \cite{Wlod, AKMW}, 
extended to Deligne-Mumford stacks in \cite{bergh1}. Given a birational isomorphism between Deligne-Mumford orbifolds, that is 
Deligne-Mumford stacks with trivial generic stabilizers, it can be decomposed into a sequence of stacky blows ups and blow downs with smooth centers. 
This means that given two resolutions $\pi: Y \to X$, $\pi': Y' \to X$ we may assume that $Y'$ is obtained from $Y$ by a single smooth stacky blow up
$\gamma: Y' \to Y$.
Recall that by definition a stacky blow up is either a blow up of a substack, or a root stack along 
a smooth divisor, and in each case we have $\gamma_* \OO_{Y'} \simeq \OO_Y$ (see e.g. \cite[Example 4.6]{BLS}).

We get a commutative diagram
\[\xymatrix{
\Db(Y') \ar[dr]_{\pi'_*} \ar[rr]^{\gamma_*} && \Db(Y) \ar[dl]^{\pi_*} \\ 
& \Db(X) &
}\]

Furthermore the adjoint pair $\gamma^*$, $\gamma_*$ satisfies $\gamma_* \gamma^* = \id$ so that $\gamma^*$ is fully-faithful and there is a semi-orthogonal decomposition
\[
\Db(Y') = \langle \Ker(\gamma_*), \gamma^* \Db(Y) \rangle. 
\]

In particular $\gamma_*$ is essentially surjective and condition (1) for $Y$ is equivalent to condition (1) for $Y'$.
Furthermore we have a semi-orthogonal decomposition 
\[
\Ker(\pi'_*) = \langle  \Ker(\gamma_*), \gamma^* \Ker(\pi_*) \rangle 
\]
which induces an equivalence of Verdier localizations
\[\xymatrix{
\Db(Y')/\Ker(\pi'_*) \ar[dr]_{\pi'_*} \ar[rr]^{\simeq} && \Db(Y)/\Ker(\pi_* ) \ar[dl]^{\pi_*} \\ 
& \Db(X) &
}\]
so that conditions (2) for $Y$ and $Y'$ are equivalent as well.
\end{proof}

\begin{lemma}\label{lem:t-structure}
If $\pi: Y \to X$ is a resolution of rational singularities and $\Dm(Y)$ admits a $t$-structure which induces a bounded $t$-structure on $\Db(Y)$ and
for which $ \pi_*: \Dm(Y) \to \Dm(X)$ is $t$-exact,
then (1) and (2) of Theorem \ref{thm:res-of-sing-ess-surjective} are true.
\end{lemma}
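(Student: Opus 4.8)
The plan is to descend $\pi_*$ to the hearts of the two $t$-structures, identify its kernel, and reduce the statement to a localization assertion for abelian categories which is then transported to the bounded derived level.

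Write $\AA \subset \Db(Y)$ for the heart of the given $t$-structure, with $t$-cohomology functors $H^i_\AA$, and recall that $\Coh(X)$ is the heart of the standard $t$-structure on $\Db(X)$, with cohomology $H^i$. By hypothesis $\pi_*$ is $t$-exact, so $H^i(\pi_* M) \simeq \pi_*(H^i_\AA M)$ for all $M$; in particular $\pi_*$ restricts to an exact functor $\pi_*\colon \AA \to \Coh(X)$, and $\pi_* M = 0$ if and only if every $H^i_\AA(M)$ lies in the Serre subcategory $\mathcal{S} := \AA \cap \Ker(\pi_*)$. Thus $\Ker(\pi_*\colon \Db(Y)\to \Db(X)) = \Db_{\mathcal S}(Y)$, the complexes with $\AA$-cohomology in $\mathcal S$, and both assertions reduce to showing that $\pi_*$ exhibits $\Coh(X)$ as the Serre quotient $\AA/\mathcal S$ together with the corresponding equivalence $\Db(Y)/\Db_{\mathcal S}(Y) \xrightarrow{\sim} \Db(X)$.

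Essential surjectivity on the hearts is the one step I can see doing cleanly. Since $\pi$ resolves rational singularities, $R\pi_*\OO_Y \simeq \OO_X$, and the projection formula shows that the unit $F \to R\pi_* L\pi^* F$ is an isomorphism for every $F \in \Db(X)$. Taking $F \in \Coh(X)$, the object $N := L\pi^* F \in \Dm(Y)$ satisfies $R\pi_* N \simeq F$, so by $t$-exactness $\pi_*(H^i_\AA N) = H^i(F)$; hence $G := H^0_\AA(L\pi^* F) \in \AA$ is a lift with $\pi_* G \simeq F$. This shows $\pi_*\colon \AA \to \Coh(X)$ is essentially surjective, and via the Verdier localization functor it will give (1) once (2) is known.

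For the localization (2) itself I would proceed in two steps: first promote $\pi_*\colon \AA \to \Coh(X)$ to a Serre quotient equivalence $\AA/\mathcal S \simeq \Coh(X)$, and then invoke the comparison between the derived category of a Serre quotient and the Verdier quotient by the torsion complexes, using the realization functor to identify $\Db(\AA) \simeq \Db(Y)$ and $\Db(\Coh X) = \Db(X)$. The crucial leverage from rational singularities here is Grothendieck duality: from $R\pi_* R\cHom_Y(\OO_Y, \pi^! F) \simeq R\cHom_X(R\pi_*\OO_Y, F)$ and $R\pi_*\OO_Y \simeq \OO_X$ one obtains $R\pi_* \pi^! \simeq \id$, which is what should furnish a section of $\pi_*$ and hence the full faithfulness of the induced functor.

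The hard part will be this last transition, and it is exactly what forces the $t$-structure hypothesis. When $X$ is singular neither $L\pi^*$ nor $\pi^!$ preserves $\Db(X)$ --- for instance $L\pi^*$ of a skyscraper at a singular point is unbounded --- so the naive Bousfield criterion (a fully faithful bounded adjoint $A$ with $\pi_* A \simeq \id$) is unavailable; this is precisely why the problem is open for general rational singularities. The role of the $t$-structure is to replace the unbounded adjoint by a bounded section living in the heart $\AA$, so that the whole argument can be carried out within bounded categories. Making this rigorous requires (i) that the realization functor of the (generally non-standard, and on a Deligne--Mumford stack) $t$-structure is an equivalence, i.e. that $\AA$ generates $\Db(Y)$, and (ii) that higher extension groups localize correctly, i.e. $\Db(\AA)/\Db_{\mathcal S}(\AA) \simeq \Db(\AA/\mathcal S)$; establishing (i) and (ii), with the section extracted from $R\pi_*\pi^!\simeq\id$ as the key input, is the technical heart of the lemma.
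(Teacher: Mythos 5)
There is a genuine gap: what you have written is a plan whose two load-bearing steps you explicitly defer --- the identification $\Db(\AA)/\Db_{\mathcal S}(\AA)\simeq \Db(\AA/\mathcal S)$ and the claim that the realization functor $\Db(\AA)\to\Db(Y)$ is an equivalence. Neither is a formal consequence of the hypotheses (a bounded $t$-structure on $\Db(Y)$ does not by itself make its realization functor an equivalence, and derived categories of Serre quotients do not compare to Verdier quotients without further lifting conditions), so the lemma is not proved. Moreover, the tool you propose for the remaining full-faithfulness, namely $R\pi_*\pi^!\simeq\id$, lives on the wrong side: $\pi^!$ is defined on $\DD^+$ and does not preserve $\Dm$ (nor $\Db$, since $\pi$ need not have finite Tor-dimension), so it cannot interact with the given $t$-structure on $\Dm(Y)$. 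The adjoint that actually does the work is the \emph{left} adjoint $\pi^*$ on $\Dm$: from $\pi_*\pi^*=\id$ one gets that $\pi^*$ is fully faithful, and standard Bousfield--Verdier localization immediately gives an equivalence $\Dm(Y)/\Ker(\pi_*)\simeq\Dm(X)$, with no heart-level analysis needed.

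The paper's proof then descends this unbounded-below equivalence to the bounded level, and this is the only place the $t$-structure is used. For essential surjectivity, one truncates: for $\EE\in\Db(X)$ the object $\tau^{\ge -N}_{\AA}\pi^*\EE$ is bounded (by the hypothesis that $\AA$ induces a bounded $t$-structure on $\Db(Y)$) and, since $\pi_*$ is $t$-exact and hence commutes with truncations, its pushforward is $\tau^{\ge -N}\EE\simeq\EE$ for $N\gg 0$. For full faithfulness one checks, via Verdier's criterion for the inclusion $\Db(Y)/\Ker^b(\pi_*)\to\Dm(Y)/\Ker(\pi_*)$, that any morphism $C\to B$ with $C\in\Ker(\pi_*)$ and $B\in\Db(Y)$ factors through $\tau^{\ge -N}_{\AA}C$, which is bounded and still killed by $\pi_*$. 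Your heart-level essential surjectivity computation ($\pi_*H^0_{\AA}(\pi^*F)\simeq F$) is correct and is a shadow of the truncation step, but the route through Serre quotients and realization functors replaces an elementary truncation argument with machinery that requires hypotheses you do not have.
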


\begin{proof} 

We temporarily use the notation $\Ker^b(\pi_*):=\Ker(\pi_*)\cap\Db(Y)$. 
We will show that the functor $\ol{ \pi_*}: \Db(Y)/\Ker^b( \pi_*) \to \Db(X)$ is essentially surjective 
and fully faithful.

Essential surjectivity is proved in the same way as in \cite[Corollary 2.5]{Kuznetsov-sextics}.
For every $\EE \in \Db(X)$ and $N \ge 1$ we consider the distinguished triangle
\[
 \pi^* \EE \to\tau^{\ge -N}_\AA   \pi^* \EE \to C ,
\]
where $\tau^{\ge -N}_\AA$ denotes the truncation with respect to the corresponding $t$-structure $\AA$ on $\Dm(Y)$. We apply $\pi_*$ to this triangle. Since $ \pi_*  \pi^* = \id$ and $ \pi_*$ is $t$-exact, in particular $\pi_*$ 
commutes with truncation functors,
the push-forward of the triangle above has the form:
\[
 \EE \to\tau^{\ge -N} \EE \to  \pi_* C,
\]
where $\tau^{\ge -N}$ is the truncation with respect to the standard
$t$-structure on $\Dm(X).$
Since $\EE$ is bounded, for sufficiently large $N$ we have $ \pi_* C = 0$ so that $C \in \Ker( \pi_*)$. 
In particular, $\ol{\pi_*}: \Db(Y)/\Ker^b( \pi_*) \to \Db(X)$ is essentially surjective. 

On the other hand, one observes by the diagram
\[
\xymatrix{
\Dm(Y)/\Ker(\pi_*)\ar[r]^-{\ol{\pi_*}}_-{\simeq} & \Dm(X) \\ 
\Db(Y)/\Ker^b(\pi_*) \ar[u] \ar[r]^-{\ol{\pi_*}} & \Db(X)\ar@{^{(}->}[u]  \\
}
\]
that $\ol{\pi_*}$ is fully faithful if and only if the natural functor 
\[
\Db(Y)/\Ker^b(\pi_*)\to \Dm(Y)/\Ker(\pi_*)
\]
is fully faithful. To show this, we use Verdier's criterion 
\cite[Theorem 2.4.2]{verdier}. 
We see that if $C\to B$ is a morphism in $\Dm(Y)$ with $C\in\Ker(\pi_*)$ and $B\in\Db(Y)$, then for a big enough $N$ (depending on $B$) this morphism factors through $\tau^{\ge -N}_\AA C$. 
Since $\pi_*$ commutes with $\tau^{\ge -N}_\AA$, one easily sees that $\tau^{\ge -N}_\AA C\in\Ker^b(\pi_*)$.
\end{proof}


\begin{proof}[Proof of Theorem \ref{thm:res-of-sing-ess-surjective}]
By Lemma \ref{lem:single-resolution} it suffices 
to check the statement for a single resolution. We consider the canonical stack 
$\pi: \XX_{can} \to X$ over $X$ \cite[Remark 4.9]{FMN}.
Since we assume $k$ has characteristic zero,
the pushforward $ \pi_*: \XX_{can} \to X$ is exact. The proof is finished using Lemma \ref{lem:t-structure}.
\end{proof}

\begin{remark}
Statements (1) and (2) of Theorem \ref{thm:res-of-sing-ess-surjective} for a resolution of arbitrary rational singularities $\pi: Y \to X$ is an old open question going back to Bondal and Orlov \cite{bondal-orlov-icm}. 
In addition to quotient singularities the answer is positive in the case of cones over smooth Fano varieties \cite{Efimov}, and for rational singularities such that fibers of a resolution 
$Y \to X$ have dimension at most one \cite{Kuznetsov-sextics} (in \cite[Corollary 2.5]{Kuznetsov-sextics}
property (1) is proved, while property (2) follows from Lemma \ref{lem:t-structure}).
\end{remark}

\section{Examples and Applications}

In this section $k$ is an algebraically closed field of characteristic zero.


\subsection{Torsion-free $\Kt_0(X)$}

\begin{application}[\cite{levine0, levine}]
Let $X$ be a projective rational surface with isolated quotient singularities. Then $\Kt_0(X)$ is a free abelian group of the same rank as $\Gt_0(X)$.

Indeed if $\pi: Y \to X$ is a resolution, then by Theorem \ref{thm:res-of-sing-injective} we have an injection $\pi^*: \Kt_0(X) \to \Kt_0(Y)$. Since $Y$ is a smooth projective rational surface,
$\Kt_0(Y)$ is a free abelian group of finite rank, and the same is true for $\Kt_0(X)$. Finally by Corollary \ref{cor:main-corollary}, we have an inclusion $\Kt_0(X) \subset \Gt_0(X)$ and the ranks of the two groups are equal. 

Note that $\Gt_0(X)$ will typically have non-zero torsion.
\end{application}

\begin{application}[Weighted projective spaces with coprime weights]\label{appl:weighted-Pn}
Let $X = \P(a_0, \dots, a_n)$ be a weighted projective space. 
Let us assume that the weights $a_0, \dots, a_n$ are pairwise coprime. In this case singularities of $X$ are isolated,
and using our results we show that $\Kt_0(X)$ is a free abelian group of rank $n+1$. 

Indeed if we let $Y = [\P(a_0, \dots, a_n)]$ to be the weighted projective stack, the natural morphism $\pi: Y \to X$ is a resolution of singularities,
and by Theorem \ref{thm:res-of-sing-injective}, $\pi^*$ is injective. Since $\Kt_0(Y)$ is a free abelian of finite rank \cite[Theorem 5.6]{Joshua-Krishna}, the same is true for $\Kt_0(X)$.
To compute the rank of $\Kt_0(X)$ we can use the following argument: by Corollary \ref{cor:main-corollary} we have an isomorphism $\Kt_0(X) \otimes \Q \simeq \Gt_0(X) \otimes \Q$
and the latter space is $(n+1)$-dimensional, which can be seen by comparing $\Gt_0(X)$ to Chow groups \cite{AlAmrani}.
Thus we get $\Kt_0(X) \simeq \Z^{n+1}$.
\end{application}

\subsection{$ADE$ curves and threefolds}

We consider one-dimensional $ADE$ singularities over an algebraically closed field of characteristic zero.
For each such curve $C$ we compute $\Pic(C)$ as well as $\Ksg_0(C)$ and $\Ksg_1(C)$.
If $N$ is the number of irreducible components of the curve, then $\Ksg_0(C)$ is computed using Corollary \ref{cor:K_0^Sg-of-curves-and-surfaces}.
We record the results in the table:

\medskip

\begin{equation}\label{tab:ADE-1}
\begin{tabular}{ |c|c|c|c|c|c| }
\hline
$C$ & Equation & $N$ & $\Ksg_0(C)$ & $\Pic(C)$ & $\Ksg_1(C)$ \\
\hline
 $A_{2l}$, $l \geq 1$ & $y^2+z^{2l+1}$ & $1$ & $0$ & $k^l$ & $k^l$ \\ 
\hline
 $A_{2l-1}$, $l \geq 1$ & $y^2+z^{2l}$ & $2$ & $\Z$  & $0$ & $k^* \oplus \Z$ \\ 
\hline
 $D_{2l}$, $l\geq 2$ & $y^2z+z^{2l-1}$ & $3$ & $\Z^2$  & $0$ & $(k^* \oplus \Z)^2$ \\  
\hline
 $D_{2l-1}$, $l \geq 3$ & $y^2z+z^{2l-2}$ & $2$ & $\Z$  & $k^{l-2}$ & $[k^* \oplus \Z; k^{l-2}]$ \\  
\hline
 $E_6$ & $y^3+z^4$ & $1$ & $0$  & $k^3$ & $k^3$ \\
\hline
 $E_7$ & $y^3+yz^3$ & $2$ & $\Z$  & $k$ & $[k^* \oplus \Z; k]$ \\
\hline
 $E_8$ & $y^3+z^5$ & $1$ & $0$  & $k^4$ & $k^4$ \\
\hline
\end{tabular}
\end{equation}

\medskip

We use the notation $[A; B]$ to denote an abelian group which has a subgroup $A$ with quotient $B$.
The first singularity $\Kt$-theory groups $\Ksg_1(C)$ are computed using the following Proposition.

\begin{proposition}\label{prop:ade-curves-K1}
For every 
$ADE$ singularity we have a natural exact sequence
\[
0\to (k^* \oplus\Z)^{N-1}\to \Ksg_1(C)\to\Pic(C)\to 0.
\]
\end{proposition}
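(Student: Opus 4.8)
The plan is to run the $\A^1$-contractibility machinery of Proposition \ref{prop:A1-Ksg} and then reduce the computation of the relevant $\Gt_1$-group to the normalization. Each $ADE$ curve $C$ is cut out by a weighted homogeneous polynomial, so by Lemma \ref{lem:contractible}(2) it is $\A^1$-contractible, with contraction collapsing $C$ onto the cone point $x_0 = 0$; since $k$ is algebraically closed and $C$ is reduced, $C$ has a smooth rational point. Hence Proposition \ref{prop:A1-Ksg}(3) with $j=1$ produces a short exact sequence
\[
0 \to \Coker(p^*\colon \Kt_1(k) \to \Gt_1(C)) \to \Ksg_1(C) \to \Ker(x_0^*\colon \Kt_0(C) \to \Kt_0(k)) \to 0.
\]
The right-hand term is immediate to identify: $x_0^*$ is the rank homomorphism on $\Kt_0$, so $\Ker(x_0^*) = F^1\Kt_0(C)$, and since $C$ is a curve we have $F^2\Kt_0(C) = 0$, whence $\Ker(x_0^*) = F^1\Kt_0(C) = \Gr^1\Kt_0(C) \simeq \Pic(C)$ by the Fulton--Lang isomorphism used in the proof of Proposition \ref{prop:K_0sg-filtr}(2). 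This yields the surjection $\Ksg_1(C) \twoheadrightarrow \Pic(C)$ with the asserted kernel, once the left term is computed.

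The core of the argument is the identification $\Coker(p^*\colon \Kt_1(k) \to \Gt_1(C)) \simeq (k^* \oplus \Z)^{N-1}$, and for this I would compute $\Gt_1(C)$ from the $\Gt$-theory localization sequence for $\{0\} \hookrightarrow C \hookleftarrow U$, where $U = C \setminus \{0\}$. Every irreducible component of an $ADE$ curve is unibranch at the origin with normalization $\A^1$, so $U$ is a disjoint union $\bigsqcup_{i=1}^N \mathbb{G}_m$ and therefore $\Gt_1(U) = \bigoplus_{i=1}^N \Kt_1(\mathbb{G}_m) = (k^* \oplus \Z)^N$ by the fundamental theorem of $\Kt$-theory. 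The two pushforwards $i_*$ from the origin vanish: first $[\OO_0] = 0$ in $\Gt_0(C)$, because it has rank zero and its image in $\Ksg_0(C)$ is zero by Corollary \ref{cor:structure-sheaves-vanish}, these being exactly the two projections of the splitting $\Gt_0(C) = \Z\cdot[\OO_C] \oplus \Ksg_0(C)$ from Proposition \ref{prop:A1-Ksg}(3); then the projection formula gives $i_*(a) = i_*(i^*p^*a) = [\OO_0]\cdot p^*(a) = 0$, so $i_*\colon \Kt_1(k) \to \Gt_1(C)$ is zero as well. Exactness then makes the restriction $j^*\colon \Gt_1(C) \xrightarrow{\sim} \Ker(\partial_1)$ an isomorphism.

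It remains to pin down the boundary map $\partial_1\colon \Gt_1(U) \to \Gt_0(\{0\}) = \Z$, and this is the step I expect to be the main obstacle, since $C$ is singular at $0$ and the naive tame symbol attached to $\OO_{C,0}$ is not directly available. I would bypass this using naturality of the localization sequence under the finite pushforward along the normalization $\nu\colon \tilde C \to C$: as $\nu$ is an isomorphism over $U$ and $\nu^{-1}(0)$ consists of $N$ rational points, $\partial_1$ factors as $\nu_* \circ \tilde\partial_1$, where $\tilde\partial_1$ is the boundary map for the smooth curve $\tilde C = \bigsqcup_{i=1}^N \A^1$ and $\nu_*\colon \Z^N \to \Z$ is summation. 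In the smooth case $\tilde\partial_1$ annihilates the $k^*$-summands and is the identity on the $\Z$-summands, so $\partial_1\big((u_i,n_i)_i\big) = \sum_i n_i$ and $\Ker(\partial_1) = (k^*)^N \oplus \Z^{N-1}$.

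Finally, $j^*p^*$ equals the pullback $p_U^*$ along $U \to \Spec(k)$, which is the diagonal embedding $k^* \hookrightarrow (k^*)^N$ into the unit summands. Therefore
\[
\Coker(p^*) \simeq \Ker(\partial_1)/\operatorname{diag}(k^*) \simeq (k^*)^{N-1} \oplus \Z^{N-1} = (k^* \oplus \Z)^{N-1},
\]
which, combined with the first paragraph, gives the exact sequence $0 \to (k^*\oplus\Z)^{N-1} \to \Ksg_1(C) \to \Pic(C) \to 0$. As a consistency check one verifies that this reproduces every row of the table \eqref{tab:ADE-1}.
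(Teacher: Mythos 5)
Your proposal is correct and follows essentially the same route as the paper: apply Proposition \ref{prop:A1-Ksg}(3), identify $\Ker(x_0^*)$ with $\Pic(C)$ via $\Kt_0(C)=\Z\oplus\Pic(C)$, and compute $\Gt_1(C)$ from the localization sequence for $\{0\}\hookrightarrow C$ after showing $i_*=0$. The only (welcome) difference is that you spell out the boundary map $\Gt_1(U)\to\Gt_0(k)$ by comparing with the normalization, a point the paper's proof leaves implicit.
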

\begin{proof}
By Proposition \ref{prop:A1-Ksg} (3) and using the fact that 
$\Kt_0(C) = \Z \oplus \Pic(C)$
(see \cite[Remark 1 on page 126]{fulton-lang})
we get a short exact sequence
$$0\to \Gt_1(C)/k^*\to \Ksg_1(C)\to \Pic(C)\to 0.$$
To finish the proof we show that $\Gt_1(C) = (k^*)^N \oplus \Z^{N-1}$, and the morphism $\Kt_1(k) \to \Gt_1(C)$ maps $k^*$ into $(k^*)^{N}$ diagonally.
This is done using the localization sequence for the closed embedding $i: \{0\} \to C$.
Since for every $j \ge 0$, $i_*: \Gt_j(k) \to \Gt_j(C)$ factors through any component $\A^1$ of the normalization of $C$, it is a zero map,
and we get a short exact sequence
\[
0 \to \Gt_1(C)\to \Gt_1(\A^1 \setminus \{0\})^N \to \Gt_0(k) \to 0
\]
which finishes the proof as $\Gt_1(\A^1 \setminus \{0\}) = k^* \oplus \Z$.
\end{proof}

\begin{lemma}\label{lem:cusp-curves}
If $C$ is a curve with equation $x^a - y^b = 0$ where $\gcd(a,b) = 1$,
then we have an isomorphism 
\[
\Pic(C) \simeq k^{\frac12 (a-1)(b-1)}.
\]
\end{lemma}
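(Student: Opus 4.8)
The plan is to reduce the computation to the normalization via the conductor square. Since $\gcd(a,b)=1$, the morphism $\A^1 = \Spec k[t] \to C$ given by $x = t^b$, $y = t^a$ is the normalization $\nu$, and it identifies the coordinate ring $R = k[x,y]/(x^a-y^b)$ with the numerical semigroup algebra $R = k[t^a,t^b] \subset \tilde R := k[t]$, where $S = \langle a,b\rangle \subset \N$; in particular $C$ is unibranch. First I would record the two classical facts about $S$: its Frobenius number is $ab-a-b$ (Sylvester), so the conductor of $R$ in $\tilde R$ is the ideal $\mathfrak{c} = (t^c)$ with $c = (a-1)(b-1)$; and the number of gaps $\#(\N\setminus S)$ equals $\tfrac12(a-1)(b-1)$.

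Next I would invoke the conductor square
\[
\begin{array}{ccc} R & \to & \tilde R \\ \downarrow & & \downarrow \\ R/\mathfrak{c} & \to & \tilde R/\mathfrak{c}, \end{array}
\]
which is a Milnor square since $\tilde R \to \tilde R/\mathfrak{c}$ is surjective. The associated units--Picard Mayer--Vietoris sequence reads
\[
\tilde R^* \oplus (R/\mathfrak{c})^* \to (\tilde R/\mathfrak{c})^* \to \Pic(R) \to \Pic(\tilde R) \oplus \Pic(R/\mathfrak{c}).
\]
Because $\tilde R = k[t]$ has $\Pic(\tilde R) = 0$ and the Artinian local rings $R/\mathfrak{c}$, $\tilde R/\mathfrak{c}$ have trivial Picard group, this exhibits $\Pic(C)$ as the cokernel of $\tilde R^* \oplus (R/\mathfrak{c})^* \to (\tilde R/\mathfrak{c})^*$.

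It then remains to compute this cokernel. Writing $(\tilde R/\mathfrak{c})^* = (k[t]/t^c)^* \simeq k^* \times U$ with $U = 1 + t\,k[t]/t^c$, the constants $\tilde R^* = k^*$ surject onto the $k^*$-factor, so the cokernel is the quotient of $U$ by the image of the principal units $U_R = 1 + \mathfrak{m}$ of $R/\mathfrak{c}$, where $\mathfrak{m}$ is spanned by $\{t^s : s \in S,\ 0 < s < c\}$. In characteristic zero the logarithm gives a group isomorphism $U \xrightarrow{\sim} V := t\,k[t]/t^c$, and since $\mathfrak{m}$ is an ideal (hence closed under the operations occurring in $\log$ and $\exp$) it restricts to an isomorphism $U_R \xrightarrow{\sim} W$, the $k$-subspace spanned by the monomials $t^s$ with $s \in S \cap (0,c)$. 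Hence $\Pic(C) \simeq V/W$, a $k$-vector space of dimension
\[
(c-1) - \#\big(S \cap (0,c)\big) = \#\big(\{1,\dots,c-1\}\setminus S\big) = \#(\N\setminus S) = \tfrac12(a-1)(b-1),
\]
where the middle equality uses that every gap of $S$ lies in the interval $(0,c)$. This yields $\Pic(C) \simeq k^{\frac12(a-1)(b-1)}$ as an additive group.

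The main obstacle is the bookkeeping in the final step: one must confirm that the conductor square is genuinely a Milnor square and set up the units--Picard sequence with the correct arrows, and then identify the image of $(R/\mathfrak{c})^*$ inside $(\tilde R/\mathfrak{c})^*$ precisely as the $S$-graded unipotent subgroup $U_R$, so that the logarithm linearizes the cokernel into the span of the gap-monomials. Everything else---the semigroup arithmetic and the dimension count---is routine once the Frobenius number and Sylvester's gap count are in hand.
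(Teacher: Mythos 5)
Your proof is correct, but it takes a genuinely different route from the paper's. The paper simply quotes Oort's general result \cite{Oort} that for the unibranch curve $C$ with normalization $\A^1 = \Spec k[t]$ one has a group isomorphism $\Pic(C)\simeq k[t]/k[t^a,t^b]$, and then counts the gap monomials via Sylvester; all the Picard-theoretic content is outsourced to the citation. You instead reprove the needed special case of that result from scratch: the conductor square is indeed a Milnor square, the units--Picard Mayer--Vietoris sequence is the standard consequence, the vanishing of $\Pic$ for $k[t]$ and for the two Artinian local rings is right, and in characteristic zero the truncated $\log$/$\exp$ pair does linearize the unipotent quotient $U/U_R$ onto $\bigl(t\,k[t]/t^c\bigr)/W$, whose dimension is the gap count $\tfrac12(a-1)(b-1)$ since every gap lies below the conductor exponent $c=(a-1)(b-1)$. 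What your argument buys is self-containedness and transparency about where characteristic zero enters (the paper's standing convention $\chr(k)=0$ covers you; in positive characteristic the quotient would still have the right order by the filtration $1+t^ik[t]/t^c$ but need not be a vector group in the naive sense, so the logarithm step is genuinely used). What the paper's route buys is brevity and the fact that Oort's statement holds with no characteristic assumption for the additive identification $\Pic(C)\simeq \tilde R/R$. Both arguments terminate in the identical semigroup count, and your $V/W$ is visibly the same as the paper's $k[t]/k[t^a,t^b]$.
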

\begin{proof}
Consider $\pi: \A^1 \to C$ given by $\pi(t) = (t^b, t^a)$.
Under the condition $\gcd(a,b) = 1$, $\pi$ is surjective which implies irreducibility of $C$,
and since $\pi$ is finite of degree one, $\pi$ is the normalization morphism.
By \cite[Corollary 3.3]{Oort} we get an isomorphism of abelian groups
\[
\Pic(C) \simeq k[t] / k[t^a, t^b].
\]
Thus $\Pic(C)$ obtains a $k$-vector space structure with a $k$-basis 
corresponding of $t^i$, for every $i \ge 0$ which can not 
be represented as a non-negative integer combination of $a$ and $b$.

By a classical theorem of Sylvester, the number of positive integers not representable by non-negative integer combinations of $a$ and $b$
is equal to $\frac12 (a-1)(b-1)$ (see \cite{NW} for a modern treatment)
so that we have an isomorphism of abelian groups
\[
k[t] / k[t^a, t^b] \simeq k^{\frac12 (a-1)(b-1)}.
\]
\end{proof}

Every $ADE$ curve $C$ is a union of components isomorphic to $\A^1$ and at most one component $C_0$ with equation $x^a - y^b = 0$.
Trivializing line bundles on each affine line component, we deduce that $\Pic(C) = \Pic(C_0)$.
Proposition \ref{prop:ade-curves-K1} and Lemma \ref{lem:cusp-curves} allow us to fill in the table 
\eqref{tab:ADE-1}.

\medskip

We demonstrate what the singularity $\Kt$-theory has to do with the question of computing class groups. 
The applications below can be obtained by other methods too, however, we demonstrate the approach 
which relies on Kn\"orrer periodicity shifting the topological filtration (Proposition \ref{prop:knorrer-shift}).

%

\begin{application}\label{appl:Knorrer-3}
Let $C \subset \A^2$ with coordinates $z,w$ be given by $g(z,w) = 0$ and let
$X \subset \A^4$ with coordinates $x,y,z,w$ be given by $xy + g(z,w) = 0$. 

Let us assume that $C$ is reduced.
Since we have $\Sing(X) = \{ (0, 0) \} \times \Sing(C)$, the latter condition is equivalent
to $X$ having isolated singularities, and since $X$ is a hypersurface, it
is irreducible and normal.

Let $N$ be the number of irreducible components of $C$.
By Proposition \ref{prop:knorrer-shift} and Proposition \ref{prop:K_0sg-filtr} we have an isomorphism
\[
\Cl(X) / \Pic(X) = \Gr^1 \Ksg_0(X) \simeq \Gr^0 \Ksg_0(C) = \Z^{N-1},
\]
in particular $X$ is factorial if and only if $C$ is irreducible.
\end{application}

\begin{example}\label{ex:ADE-dim3}
We can compute the class group of the standard forms of three-dimensional 
$ADE$ singularities. Since $X$ is given by a weighted homogeneous equation, 
$\Pic(X) = 0$ \cite[Lemma 5.1]{Murthy}
we have $\Cl(X) \simeq \Ksg_0(X) = \Z^{N-1}$.
We put the results in the table (cf. table \ref{tab:ADE-1}):
\begin{equation}\label{tab:ADE-3}
\begin{tabular}{| c | c | c |}
\hline
 $X$ & Equation & $\Cl(X)$ \\ 
\hline
 $A_{2k} \; (k \ge 1)$ & $xy + z^2+w^{2k+1}$ & $0$ \\ 
\hline
 $A_{2k-1} \; (k \ge 1)$ & $xy + z^2+w^{2k}$ & $\Z$ \\ 
\hline
 $D_{2k} \; (k \ge 2)$ & $xy + z^2w+w^{2k-1}$ & $\Z^2$ \\  
\hline
 $D_{2k-1} \; (k \ge 3)$ & $xy + z^2w+w^{2k-2}$ & $\Z$ \\  
\hline
 $E_6$ & $xy + z^3+w^4$ & $0$\\
\hline
 $E_7$ & $xy + z^3+zw^3$ & $\Z$ \\
\hline
 $E_8$ & $xy + z^3+w^5$ & $0$\\
\hline
\end{tabular}
\end{equation}
\end{example}

\subsection{Non-vanishing $\Ksg_1(X)$}

In this section we collect some examples where $\Ksg_1(X)$ is nonzero.
From the singularity $\Kt$-theory exact sequence \eqref{eq:seq-k} it follows that $\Ksg_1(X)$
surjects onto $\Ker(\Kt_0(X) \overset{\can}{\to} \Gt_0(X))$.

\begin{example}[Non-isolated quotient singularity with huge kernel $\Ker(\Kt_0(X) \to \Gt_0(X)$]\label{ex:toric}
The first such example has been constructed by Gubeladze \cite{gubeladze}.
We present an example given by Corti\~nas, Haesemeyer, Walker and Weibel \cite[Example 5.10]{weibel-co-toric}.

Let $E = \OO \oplus \OO(2)$ be the rank two bundle over $\P^1$.
Let $\Z_2$ act on $E$ fiberwise via $v \mapsto -v$. 
Then $X = E / \Z_2$, has quotient singularities and its singular locus isomorphic to $\P^1$.

The canonical map $\Kt_0(X)\to \Gt_0(X)$ is not injective, and furthermore, the kernel $\Ker(\Kt_0(X) \to \Gt_0(X))$ is huge,
that is contains the base field $k$ as a subgroup.
\end{example}

\begin{example}[Isolated rational singularity with huge kernel $\Ker(\Kt_0(X) \to \Gt_0(X)$] \label{ex:cubic-cone}
Consider a smooth cubic hypersurface $S \subset \P^3$, 
and let $X \subset \A^4$ be the affine cone over $S$. 
Then $X$ has an isolated rational singularity. 

The Grothendieck group of a cone over a smooth variety has been computed in \cite{weibel-co}. In particular since $\chi(\TT_S) = -4$ by Riemann-Roch so that 
$H^1(S,\Omega^1_{S/ \Q}(1))=H^1(S, \Omega^1_{S/k}(1)) = H^1(S, \TT_S) \ne 0$, where the first equation follows from the short exact sequence
$$0\to \Omega^1_{k/ \Q}(1)\to\Omega^1_{S/ \Q}(1)\to\Omega^1_{S/k}(1)\to 0,$$
see \cite[Proposition 20.6.2]{EGA4}. The main result of \cite{weibel-co} implies that $\Kt_0(X)$ is huge, that is it contains a nonzero $k$-vector space.

Finally by Proposition \ref{prop:A1-Ksg} (1)
the canonical map $\Kt_0(X)\to \Gt_0(X)$ factors through $\Z$, so that $\Ker(\Kt_0(X) \to \Gt_0(X))$ 
is huge as well.
%
\end{example}

\begin{example}[Non-vanishing $\Ksg_1(X)$ for isolated quotient singularities over non-algebraically closed fields]\label{ex:A1-any-field}
Let $X = \A^2/\Z_2$ be the quotient by the action $v \mapsto -v$. 
We claim that $\Ksg_1(X) \simeq k^* / (k^*)^2$.

Indeed, $X$ is isomorphic to the affine surface $xy + z^2 = 0$, and using 
the Kn\"orrer periodicity Theorem \ref{Thm:Knorrer} 
we have
\[
\Ksg_1(X) \simeq \Ksg_1(R), 
\]
where $R = k[\epsilon]/(\epsilon^2)$. We compute the singularity $\Kt$-theory via the $\Kt$-theory sequence \eqref{eq:seq-k}, 
plugging in $\Gt_i(R) = \Kt_i(k)$, as $\Gt$-theory is independent of the non-reduced scheme structure:
\[
\Kt_1(R) \to k^* \to \Ksg_1(R) \to \Kt_0(R) \to \Z \to \Ksg_0(R) \to 0.
\]
Now $\Kt_0(R) = \Z$ and the map $\Z = \Kt_0(R) \to \Z$ is multiplication by two (cf Example \ref{ex:k-eps}),
and similarly $\Kt_1(R) = R^*$, and the map $\Kt_1(R) \to k^*$ is $a + b\epsilon \mapsto a^2$ \cite[Example 10.2]{Holm}.
We get
\[
\Ksg_1(X) \simeq \Ksg_1(R) \simeq k^* / (k^*)^2,
\]
which is in general a non-finitely generated $2$-torsion group.
\end{example}

\subsection{Proof of a conjecture of Srinivas for quotient singularities}

In the 1980s Srinivas considered the question 
whether for an isolated quotient singularity $x_0 \in X$
the length homomorphism $l: \Kt_0(X \text{ on } x_0) \to \Z$ is an isomorphism 
\cite[Page 38]{srinivas2}.
Here $\Kt_0(X\text{ on } x_0)$ stands for the Grothendieck group of perfect 
complexes supported at $x_0$ (originally Srinivas has considered the Grothendieck group
of coherent sheaves which are supported at the singular points and which are perfect as complexes, but 
by \cite[Proposition 2]{RS} these two groups are isomorphic).

Levine has proved that $l$ is an isomorphism if $X$ is two-dimensional with isolated 
quotient singularities
\cite[Theorem 3.2]{levine0}, 
that $l$ is always surjective for isolated Cohen-Macaulay singularities, 
and that it has torsion kernel \cite[Proposition 2.6, Theorem 2.7]{levine} 
in the case of isolated quotient-singularities.

The language of the singularity $\Kt$-theory is well-adapted to deal with this kind of questions. 

\begin{lemma}\label{lem:Srinivas-general}
Let $k$ be an algebraically closed field. 
Let $X$ be a quasi-projective variety with isolated singularities. 
There is an exact sequence 
\begin{equation}\label{eq:diag-srinivas}
\Ksg_1(X) \to \Kt_0(X\text{\;\rm on }\Sing(X)) \overset{l}{\to} \Z^{\Sing(X)} \to 0
\end{equation}
and a natural surjective homomorphism $\Ker(l) \to \Ker(\can: \Kt_0(X) \to \Gt_0(X))$.
\end{lemma}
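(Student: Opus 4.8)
The plan is to derive both assertions from the singularity $\Kt$-theory exact sequence with supports of Remark \ref{rem:K-th-supp-seq}, applied to the zero-dimensional closed subset $Z = \Sing(X)$, and then to compare it with the support-free sequence \eqref{eq:seq-k}.

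For the exact sequence, I would start from the bottom terms of the supports analog of \eqref{eq:seq-k}:
\[
\Ksg_1(X \text{ on } Z) \to \Kt_0(X \text{ on } Z) \overset{\can_Z}{\to} \Gt_0(Z) \to \Ksg_0(X \text{ on } Z) \to 0.
\]
Since $X$ has isolated singularities, $Z$ is a finite set of closed points, all $k$-rational as $k$ is algebraically closed; dévissage then identifies $\Gt_0(Z) \simeq \Z^{\Sing(X)}$, the isomorphism being the length at each point, and under this identification $\can_Z$ becomes the length homomorphism $l$. Applying Lemma \ref{lem:K-th-supp}, which gives $\Ksg_1(X \text{ on } Z) \simeq \Ksg_1(X)$, and Corollary \ref{cor:k-0-supp-sing-zero}, which gives $\Ksg_0(X \text{ on } Z) = 0$, turns this into the asserted sequence
\[
\Ksg_1(X) \to \Kt_0(X \text{ on } \Sing(X)) \overset{l}{\to} \Z^{\Sing(X)} \to 0;
\]
in particular the vanishing of $\Ksg_0(X \text{ on } Z)$ is exactly the surjectivity of $l$.

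For the homomorphism $\Ker(l) \to \Ker(\can)$, the key point is that the forget-supports inclusions $\Dperf_Z(X) \subset \Dperf(X)$, $\Db_Z(X) \subset \Db(X)$ and $\Dsg_{Z,dg}(X) \to \Dsg_{dg}(X)$ are all induced by dg-functors, so Schlichting's machinery yields a genuine morphism of long exact sequences from the supports sequence to \eqref{eq:seq-k}. The resulting ladder has $\can_Z$ over $\can$, the isomorphism of Lemma \ref{lem:K-th-supp} on the $\Ksg_1$ terms, the forget-supports map $\Kt_0(X \text{ on } Z) \to \Kt_0(X)$ in the middle, and the pushforward $\Gt_0(Z) \to \Gt_0(X)$ on the right. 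For $\alpha \in \Ker(l) = \Ker(\can_Z)$, commutativity of the square over $\can_Z$ and $\can$ gives $\can(\mathrm{forget}(\alpha)) = \mathrm{pushforward}(\can_Z(\alpha)) = 0$, so forget-supports restricts to a homomorphism $\Ker(l) \to \Ker(\can)$.

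Surjectivity is then a diagram chase. Given $\beta \in \Ker(\can) \subset \Kt_0(X)$, exactness of \eqref{eq:seq-k} lifts $\beta$ to some $\gamma \in \Ksg_1(X)$; transporting $\gamma$ across the isomorphism $\Ksg_1(X) \simeq \Ksg_1(X \text{ on } Z)$ and mapping it into $\Kt_0(X \text{ on } Z)$ produces an element $\alpha$, which lies in $\Ker(\can_Z) = \Ker(l)$ by exactness of the supports sequence, and commutativity of the left-hand square of the ladder forces $\mathrm{forget}(\alpha) = \beta$. I expect no serious obstacle beyond bookkeeping: the only thing that genuinely needs care is checking that the comparison really is a morphism of long exact sequences, i.e.\ that the isomorphism $\Ksg_1(X \text{ on } Z) \simeq \Ksg_1(X)$ is compatible with the forget-supports map on the $\Kt_0$ level, which is guaranteed by the fact that all the functors involved are induced from dg-enhancements.
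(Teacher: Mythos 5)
Your proposal is correct and follows essentially the same route as the paper: both obtain \eqref{eq:diag-srinivas} as the bottom of the supported singularity $\Kt$-theory sequence, using Lemma \ref{lem:K-th-supp} to identify $\Ksg_1(X \text{ on } \Sing(X))$ with $\Ksg_1(X)$, Corollary \ref{cor:k-0-supp-sing-zero} for the vanishing of $\Ksg_0(X \text{ on } \Sing(X))$, and the identification $\Gt_0(X \text{ on } \Sing(X)) \simeq \Z^{\Sing(X)}$ by length; the map $\Ker(l) \to \Ker(\can)$ and its surjectivity come in both cases from the morphism of long exact sequences induced by the forget-supports dg-functors. Your explicit diagram chase for surjectivity just spells out what the paper leaves implicit.
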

\begin{proof}
We consider the diagram of pretriangulated dg-categories
\[\xymatrix{
\Dperf_{dg}(X \text{ on } \Sing(X)) \ar[d]\ar[r] & \Db_{dg}(X \text{ on } \Sing(X)) \ar[d] \ar[r] & \Dsg_{dg}(X \text{ on } \Sing(X)) \ar[d]  \\
\Dperf_{dg}(X) \ar[r] & \Db_{dg}(X) \ar[r] & \Dsg_{dg}(X)   \\
}\]
and the associated long exact sequences of Schlichting's $\Kt$-groups:
\[\xymatrix{
\Ksg_1(X \text{ on } \Sing(X)) \ar[r] \ar[d]_{\simeq} & \Kt_0(X \text{ on } \Sing(X)) \ar[r] \ar[d] & 
\Gt_0(X \text{ on } \Sing(X)) \ar[r] \ar[d] & \Ksg_0(X \text{ on } \Sing(X))  \ar@{^{(}->}[d] \\
\Ksg_1(X) \ar[r] & \Kt_0(X) \ar[r]^{\can} & \Gt_0(X) \ar[r] & \Ksg_0(X)   \\
}\]
where the left vertical arrow is an isomorphism and the right vertical arrow
is injective by Lemma \ref{lem:K-th-supp}.

We have $\Ksg_0(X \text{ on } \Sing(X)) = 0$ (Corollary \ref{cor:k-0-supp-sing-zero}),
and we have a natural isomorphism
\[
\Gt_0(X \text{ on } \Sing(X)) = \Gt_0(\Sing(X)) = \Z^{\Sing(X)}
\]
given by the length (dimension) of zero-dimensional coherent sheaves,
so that exact sequence \eqref{eq:diag-srinivas} is the first row in the diagram above.

Finally, the diagram above also induces the surjection $\Ker(l) \to \Ker(\can)$.
\end{proof}

The next result deals with the injectivity part of the Srinivas conjecture for quotient singularities, and thus gives a stronger
version of \cite[Theorem 3.2]{levine0}.

\begin{proposition}\label{prop:Srinivas-quotient}
If $X$ is a quasi-projective variety with isolated quotient singularities 
then the length map
\[
l: \Kt_0(X\text{\;\rm on }\Sing(X))\to \Z^{\Sing(X)}
\]
is an isomorphism.
\end{proposition}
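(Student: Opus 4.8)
The plan is to read the result off directly from the exact sequence of Lemma \ref{lem:Srinivas-general} once we feed in the vanishing of $\Ksg_1(X)$. First I would recall from that Lemma the exact sequence
\[
\Ksg_1(X) \to \Kt_0(X \text{ on } \Sing(X)) \overset{l}{\to} \Z^{\Sing(X)} \to 0.
\]
Exactness at the right-hand term already supplies surjectivity of $l$, so the only remaining issue is injectivity. Exactness in the middle identifies $\Ker(l)$ with the image of $\Ksg_1(X)$ under the first map; in particular $\Ker(l)$ is a quotient of $\Ksg_1(X)$, and so it suffices to prove that $\Ksg_1(X) = 0$.

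Next I would invoke Theorem \ref{theorem:main-thm} (2), which asserts exactly that $\Ksg_1(X) = 0$ for a quasi-projective variety $X$ with isolated quotient singularities over an algebraically closed field of characteristic zero — precisely the hypotheses in force. Substituting this vanishing into the sequence forces $\Ker(l) = 0$, and together with the surjectivity already noted this shows that $l$ is an isomorphism, completing the argument.

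I should be candid that this deduction is a purely formal consequence of the two cited inputs; all the genuine content is carried by the vanishing $\Ksg_1(X) = 0$. The main obstacle therefore lies not in this Proposition but in establishing that vanishing, which reduces the global statement to the local models $\A^n/G_i$ via Orlov's Completion Theorem \ref{theorem:idemp-formal-completion} and the idempotent completeness of $\Dsg(\A^n/G_i)$ (Corollary \ref{cor:A^n/G-idempotent-complete}), and then computes $\Ksg_1(\A^n/G_i) = 0$ locally (Corollary \ref{K_1^Sg=0}) through the equivariant $\Kt$-theory analysis of Proposition \ref{prop:K_j^Sg}. Granting those results as available inputs, the proof of the Proposition itself is immediate.
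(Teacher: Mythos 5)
Your proposal is correct and follows exactly the paper's own argument: the paper likewise cites Theorem \ref{theorem:main-thm} (2) for the vanishing $\Ksg_1(X)=0$ and then invokes the exact sequence of Lemma \ref{lem:Srinivas-general} to conclude. Your additional remarks about where the real content lies (the local computation via Corollary \ref{K_1^Sg=0} and Orlov's completion theorem) accurately reflect the structure of the paper's proof of that vanishing.
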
 

\begin{proof}
By Theorem \ref{theorem:main-thm} (2), $\Ksg_1(X) = 0$. Lemma \ref{lem:Srinivas-general} implies the result.
\end{proof}

\begin{remark}\label{rem:ker-l}
By Lemma \ref{lem:Srinivas-general} non-vanishing 
of $\Ker(\can: \Kt_0(X) \to \Gt_0(X))$ implies non-vanishing of $\Ker(l)$.
This applies for instance in the case of a cone over a smooth cubic surface, see Example \ref{ex:cubic-cone}.
\end{remark}

\begin{example}
Let $k$ be an arbitrary field with $\chr(k) \ne 2$.
Let $Q_n$ be the $n$-dimensional affine split quadric cone as in Examples \ref{ex:A-ev}, \ref{ex:A-odd}.
It is a result of Levine \cite[Theorem 4.2]{levine} that 
\[
\Kt_0(Q_n \text{ on } 0) \simeq \left\{\begin{array}{ll}
                                  \Z \oplus k^*/(k^*)^2 , & n \text{ even } \\
                                  \Z^2 \oplus k^* , & n \text{ odd } \\
                                  \end{array}\right.
\]
This result can be reproved using exact sequence \eqref{eq:diag-srinivas}
and the fact that 
\[
\Ksg_1(Q_n) \simeq \left\{\begin{array}{ll}
                                  k^*/(k^*)^2 , & n \text{ even (cf Example \ref{ex:A1-any-field})} \\
                                  \Z \oplus k^* , & n \text{ odd (cf $A_1$ case in \eqref{tab:ADE-1})} \\
                                  \end{array}\right.
\]
Similarly, one can compute $K_0(X \text{ on } 0)$ for other ADE singularities of arbitrary dimension.
We omit the details of this computation.
\end{example}

\providecommand{\arxiv}[1]{{\tt{arXiv:#1}}}


\begin{thebibliography}{17}


\bibitem{AKMW}
D. Abramovich, K. Karu, K. Matsuki and J. W{\l}odarczyk:
{\it Torification and factorization of birational maps},
J. Amer. Math. Soc. 15(3):531--572, 2002.

\bibitem{AlAmrani}
A.\,Al Amrani:
{\it Groupe de Chow et K-th\'eorie coh\'erente des espaces projectifs tordus},
Proceedings of Research Symposium on K-Theory and its Applications (Ibadan, 1987).
K-Theory 2 (1989), no. 5, 579--602. 

\bibitem{auslander-reiten}
M. Auslander and I. Reiten:
{\it Grothendieck group of algebras and orders}, 
J. Pure Appl. Algebra 39 (1986), 1--51.




\bibitem{balmer-schlichting}
P. Balmer and M. Schlichting:
{\it Idempotent Completion of Triangulated Categories}, 
J. of Algebra, 236 (2001), 819--834 .

\bibitem{baum-fulton-macpherson} 
P.Baum, W. Fulton and R. MacPherson:
{\it Riemann-Roch for singular varieties}
Publications Math\'ematiques de l'IH\'ES 45 (1975), 101--145.

\bibitem{beil-vol}
A.\,Beilinson, V.\,Vologodsky:
\emph{A DG guide to Voevodsky's motives},
Geom. Funct. Anal. 17 (2008), no. 6, 1709--1787. 

\bibitem{benson}
D. J. Benson:
{\it Polynomial Invariants of Finite Groups},
London Mathematical Society Lecture
Note Series, 190. Cambridge University Press, Cambridge, 1993.

\bibitem{bergh1} 
D. Bergh:
{\it Functorial destackification of tame stacks with abelian stabilisers}, 
Compositio Math. 153 (2017), 1257–1315.

\bibitem{BLS} 
D. Bergh, V. A. Lunts and O. M. Schn\"urer:
\emph{Geometricity for derived categories of algebraic stacks}
Selecta Math. (N.S.) 22 (2016), no. 4, 2535--2568. 

\bibitem{bergh-schn}
D.\,Bergh, O.M.\,Schn\"urer:
\emph{Conservative descent for semi-orthogonal decompositions},
\arxiv{1712.06845}.

\bibitem{sga-6} 
P. Berthelot, A. Grothendieck, L. Illusie \textit{et al.}:
{\it Th\'eorie des intersections et Th\'eor\`eme de Riemann- Roch}, 
Springer Lecture Notes in Mathematics, 225 (1971).



\bibitem{bondal-orlov-icm}
A.\,Bondal, D.\,Orlov:
\emph{Derived categories of coherent sheaves},
Proceedings of the International Congress of Mathematicians, Vol. II (Beijing, 2002), 
47--56, Higher Ed. Press, Beijing, 2002. 


\bibitem{buchweitz}
R.O. Buchweitz:
{\it Maximal Cohen–Macaulay modules and Tate cohomology over Gorenstein rings}, 
preprint, Univ. Hannover, 1986.


\bibitem{BV}
M.\,Brion, M.\,Vergne:
\emph{An equivariant Riemann-Roch theorem for complete, simplicial toric varieties},
J. Reine Angew. Math. 482 (1997), 67--92. 

\bibitem{canonaco-stellari} 
A. Canonaco and P. Stellari:
{\it Uniqueness of dg enhancements for the derived category of a Grothendieck category}, 
\arxiv{1507.05509}.


\bibitem{moving-lemma} 
C. Chevalley:
{\it Anneaux de Chow et Applications}, 
S\'eminaire Chevalley, S\'ecretariat Math., Paris (1958). 

\bibitem{weibel-co-cdh}
G.\,Corti\~nas, C.\,Haesemeyer, M.\,Schlichting, C.\,Weibel:
\emph{Cyclic homology, cdh-cohomology and negative K-theory},
Ann. of Math. (2) 167 (2008), no. 2, 549--573. 

\bibitem{weibel-co-toric} 
G. Corti\~nas, C. Haesemeyer, M.E. Walker and C. Weibel:
{\it The K-theory of toric varieties}, 
Trans. Amer. Math. Soc. 361 (2009), no. 6, 3325--3341.
\bibitem{weibel-co-negative-bass} 
G. Corti\~nas, C. Haesemeyer, M.E. Walker and C. Weibel:
{\it A negative answer to a question of Bass}, 
Proc. Amer. Math. Soc. 139 (4) (2011), 1187--1200.

\bibitem{weibel-co} 
G. Corti\~nas, C. Haesemeyer, M.E. Walker and C. Weibel:
{\it K-theory of cones of smooth varieties}, 
J. Alg. Geom. 22 (2013), 13--34. 

\bibitem{weibel-co-vorst} 
G. Corti\~nas, C. Haesemeyer, and C. Weibel:
{\it K-regularity, cdh-fibrant Hochschild homology, and a conjecture of Vorst}, 
J. Am. Math. Soc. 21 (2) (2008), 547--561.

\bibitem{de-jong}
A. J. De Jong:
{\it Smoothness, semi-stability and alterations}, 
Publications Math\'ematiques de l'IH\'ES , 83 (1996), 51--93.


\bibitem{drinfeld} 
V. Drinfeld: 
{\it DG quotients of DG categories}, 
J. of Algebra, 272 (2004), 5, 643–-691.

\bibitem{du-bois} 
P. Du Bois: 
{\it Complexe de de Rham filtr\'e d'une vari\'et\'e singuli\`ere}, 
Bull. Soc. Math. France, 109 (1) (1981), 41–-81.

\bibitem{edidingraham} 
D. Edidin and W. Graham:
{\it Equivariant intersection theory}, 
Invent. Math. 131 (1998), 595--634. 

\bibitem{Efimov}
A.I.\,Efimov:
{\it Homotopy finiteness of some DG categories from algebraic geometry}, 
\arxiv{1308.0135}.

\bibitem{FMN}
B.\,Fantechi, E.\,Mann, F.\,Nironi:
\emph{Smooth toric Deligne-Mumford stacks},
J. Reine Angew. Math. 648 (2010), 201--244. 

\bibitem{Flores}
Z. Flores:
\emph{G-groups of Cohen-Macaulay Rings with n-Cluster Tilting Objects},
\arxiv{1308.0135}.

\bibitem{Mumford} 
J. Forgaty, F. Kirwan and D. Mumford:
{\it Geometric Invariant Theory}, 
Third Enlarged Edition, Springer-Verlag, 1994.

\bibitem{fulton-cohomology} 
W. Fulton:
{\it Rational equivalence on singular varieties}, 
Publications Math\'ematiques de l'IH\'ES 45 (1975), 147--165.

\bibitem{fulton} 
W. Fulton: 
{\it Intersection theory}, 
Second Edition, Springer-Verlag New York, 1998.

\bibitem{fulton-lang} 
W. Fulton and S. Lang:
{\it Riemann-Roch Algebra}, Springer-Verlag Berlin, 1985.

\bibitem{gillet}
H. Gillet:
{\it $K$-Theory and Intersection Theory},  
In: Friedlander E., Grayson D. (eds) Handbook of K-Theory (2005), Springer-Verlag Berlin, 235--293.

\bibitem{gsprverdier}
G. Gonzalez-Sprinberg and J.L. Verdier:
{\it Construction g{\'e}om{\'e}trique de la correspondance de McKay}, 
J. Amer. Math. Soc. 14 (2001), 535--554.

\bibitem{gratz-stevenson}
S.\,Gratz, G.\,Stevenson:
\emph{Homotopy invariants of singularity categories}
\arxiv{1803.06144}

\bibitem{EGA4}
A. Grothendieck, 
{\it El{\'e}ments de g{\'e}ometrie alg{\'e}braique IV: {\'E}tude locale des sch{\'e}mas et des morphismes de sch{\'e}mas, Premi{\`e}re partie} (EGA 4), Publications Math\'ematiques de l'IH\'ES , 20 (1964), p. 5--259.

\bibitem{gubeladze-trivial-toric}
I. D. Gubeladze:
{\it  Anderson's conjecture and the maximal class of monoids over which projective modules are free}, 
Math. USSR Sb. 63 (1989), no. 1, 165--180.

\bibitem{gubeladze}
I. D. Gubeladze: 
{\it Toric varieties with huge Grothendieck group}, 
Adv. Math. 186 (2004), no. 1, 117--124.

\bibitem{hartshorne}
R. Hartshorne:
{\it Algebraic geometry}, 
Graduate Texts in Mathematics 52, Springer-Verlag, New-York, (1977).

\bibitem{herzog-marcos-waldi}
J. Herzog, E. Marcos and R. Waldi:
{\it On the Grothendieck group of a quotient singularity defined by a finite abelian group}, 
J. Algebra 149 (1992), 122--138.

\bibitem{Holm} 
H. Holm:
{\it K-groups for rings of finite Cohen-Macaulay type}, 
Forum Math. 27 (2015), no. 4, 2413--2452. 




\bibitem{Joshua-Krishna}
R.\,Joshua, A.\,Krishna:
{\it Higher K-theory of toric stacks}, 
Ann. Sc. Norm. Super. Pisa Cl. Sci. (5) 14 (2015), no. 4, 1189--1229. 


\bibitem{Kuznetsov-sextics}
A.\,Kuznetsov:
\emph{Derived categories of families of sextic del Pezzo surfaces},
\arxiv{1708.00522}.

\bibitem{Krishna-Srinivas}
A. Krishna, V. Srinivas:
\emph{Zero-cycles and K-theory on normal surfaces},
Ann. of Math. (2) 156 (2002), no. 1, 155--195. 

\bibitem{levine0}
M. Levine:
{\it Modules of finite length and $K$-groups of surfaces singularities}, 
Compositio Math. 59 (1986), 21--40.

\bibitem{levine}
M. Levine: 
{\it Localization on singular varieties}, 
Invent. Math. 91 (1988), 423--464.

\bibitem{levine-weibel}
M.\,Levine, C.\,Weibel:
\emph{Zero cycles and complete intersections on singular varieties},
J. Reine Angew. Math. 359 (1985), 106--120. 

\bibitem{lunts-orlov}
V. Lunts and D. Orlov:
{\it Uniqueness of enhancements for triangulated categories},  
J. Amer. Math. Soc. 23 (2010), 853-–908.

\bibitem{marcos}
E. Marcos:
{\it Grothendieck Groups of Quotient Singularities}, 
Trans. Amer. Math. Soc. 332 (1992), no. 1, 93--119.

\bibitem{massey}
A.\,Massey:
\emph{The KH-theory of complete simplicial toric varieties and the algebraic $K$-theory of weighted projective spaces},
J. Pure Appl. Algebra 217 (2013), no. 11, 2108--2116. 


\bibitem{Murthy}
M.P.\,Murthy:
\emph{Vector bundles over affine surfaces birationally equivalent to a ruled surface},
Ann. of Math. (2) 89, 1969, 242--253. 

\bibitem{Navkal}
V. Navkal:
\emph{K'-theory of a local ring of finite Cohen-Macaulay type},
J. K-Theory 12 (2013), no. 3, 405--432. 

\bibitem{NW}
A.\,Nijenhuis, H.\,Wilf:
{\it Representations of integers by linear forms in nonnegative integers}, 
J. Number Theory 4 1972 98--106. 

\bibitem{Oort}
F.\,Oort:
{\it A construction of generalized Jacobian varieties by group extensions}, 
Math. Ann. 147 1962 277--286. 

\bibitem{orlov-monoidal}
D. Orlov: 
{\it Projective Bundles, Monoidal Transformations, and Derived Categories of Coherent Sheaves}, 
Russian Acad. Sci. Izv. Math. (1993), no. 41 (133), 133--141.

\bibitem{orlov-sing-1}
D. Orlov: 
{\it Triangulated categories of singularities and D-branes in Landau-Ginzburg models}, 
Proc. Steklov Inst. Math. (2004), no. 3 (246), 227--248.

\bibitem{orlov-sing-2}
D. Orlov: 
{\it Formal completions and idempotent completions of triangulated categories of singularities}, 
Adv. Math. 226 (2011), no. 1, 206--217.


\bibitem{quillen}
D. Quillen:
{\it Higher algebraic K-theory I}, 
Higher K-theories, Springer Lect. Notes Math. 341 (1973), 85--147.

\bibitem{RS}
P.C.\,Roberts, V.\,Srinivas:
\emph{Modules of finite length and finite projective dimension},
Invent. Math. 151 (2003), no. 1, 1--27. 

\bibitem{schlichting-example}
M. Schlichting:
{\it A note on K-theory and triangulated categories}, 
Invent. Math. 150 (2002), 111--116.

  
\bibitem{schlichting}
M. Schlichting: 
{\it Negative K-theory of derived categories}, 
Math. Z. 253 (2006), no. 1, 97--134.

\bibitem{schlichting-2}
M. Schlichting: 
{\it Higher algebraic K-theory}, 
Topics in Algebraic and Topological K-Theory, Lect. Notes in Math. 2008 (2011), Springer, Berlin, 167--241.

\bibitem{schnurer}
O. M. Schn\"urer:
{\it Six operations on dg enhancements of derived categories of sheaves},
Selecta Math. (N.S.) 24 (2018), no. 3, 1805--1911. 


\bibitem{srinivas2}
V. Srinivas:
{\it Modules of finite length and Chow groups of surfaces with rational double points}, 
Ill. J. Math. 31(1986), 36--61.

\bibitem{srinivas3}
V. Srinivas: 
{\it $K_1$ of the cone over a curve}, 
J. Reine Angew. Math. 381 (1987), 37--50. 

\bibitem{srinivas-ICM}
V.\,Srinivas:
\emph{Algebraic cycles on singular varieties}
Proceedings of the International Congress of Mathematicians. 
Volume II, 603--623, Hindustan Book Agency, New Delhi, 2010. 

\bibitem{suslin}
A.\,Suslin:
\emph{On the K-theory of local fields},
Proceedings of the Luminy conference on algebraic K-theory (Luminy, 1983).
J. Pure Appl. Algebra 34 (1984), no. 2-3, 301--318. 

\bibitem{suslin-voevodsky}
A.\,Suslin and V.\,Voevodsky:
\emph{Bloch-Kato conjecture and motivic cohomology with finite coefficients}, in The Arithmetic and Geometry of Algebraic Cycles (Banff, AB, 1998).
NATO Sci. Ser. C Math. Phys. Sci. 548 (2000), 117--189, Kluwer Acad. Publ., Dordrecht.

\bibitem{tabuada1}
G.\,Tabuada:
\emph{Algebraic K-theory with coefficients of cyclic quotient singularities},
C. R. Math. Acad. Sci. Paris 354 (2016), no. 5, 449--452. 

\bibitem{tabuada2}
G.\,Tabuada:
\emph{$\A^1$-homotopy invariance of algebraic $K$-theory with coefficients and du Val singularities},
Ann. K-Theory 2 (2017), no. 1, 1--25. 

\bibitem{thomason-equivariant-I}
R.W. Thomason:
{\it Algebraic K-theory of group scheme actions}, 
Annals of Math. Study 113 (1987), 539–-563.


\bibitem{thomason}
R.W. Thomason: 
{\it The classification of triangulated subcategories}, 
Compositio Math. 105 (1997), no. 1, 1--27.

\bibitem{tt}
R.W. Thomason and T. Trobaugh:
{\it Higher Algebraic K-Theory of Schemes and of Derived Categories}, 
The Grothendieck Festschrift Volume III, Volume 88, Progr. Math. Birkhäuser Boston, 1990, 247--435.

\bibitem{verdier}  
J.L. Verdier:
{\it Categories deriv\'ees}, 
SGA 4 1/2, Lecture Notes in Math., v. 569, Springer-Verlag, 1977.

\bibitem{vistoli}
A. Vistoli:
{\it Higher equivariant K-Theory for finite group actions}, 
Duke Math. J. 63 (1991), no. 2, 399--419.

\bibitem{weibel-surfaces}
C. Weibel: 
{\it The negative K-theory of normal surfaces}, 
Duke Math. J. 108 (2001), no. 1, 1--35.

\bibitem{Wlod} 
J. Wlodarczyk:
{\it Toroidal varieties and the weak factorization theorem},
Invent. Math. 154(2):223--331, 2003.

\bibitem{Yoshino}
Y. Yoshino:
{\it Cohen-Macaulay modules over Cohen-Macaulay rings}, 
London Mathematical Society Lecture Note Series, 146. Cambridge University Press, Cambridge, 1990.  


\end{thebibliography}
\end{document}